\documentclass[11pt]{amsart}

\usepackage[all]{xy}
\usepackage{amsmath, amsfonts, amscd, latexsym, amsthm, amssymb}
\usepackage{hyperref}
\usepackage{tikz}



\usepackage[margin=1.2in]{geometry}

\def \c{\mathbb{C}}
\def \z{\mathbb{Z}}
\def \r{\mathbb{R}}
\def \n{\mathbb{N}}
\def \p{\mathbb{P}}
\def \q{\mathbb{Q}}
\def \t{\mathfrak{t}}

\def \V{\mathcal{V}}
\def \U{\mathcal{U}}
\def \C{\mathcal{C}}
\def \B{\mathcal{B}}
\def \A{\mathcal{A}}
\def \F{\mathcal{F}}
\def \SL{\textup{SL}}
\def \GL{\textup{GL}}
\def \Sp{\textup{Sp}}
\def \SO{\textup{SO}}
\def \S{{\underline{w_0}}}
\def \w{{\underline{w}}}

\def \X{\tilde{X}}
\def \Y{\tilde{Y}}

\def \dim{\textup{dim}}

\def \Vol{\textup{Vol}}

\def \wt{\textup{wt}}
\def \Lie{\textup{Lie}}
\def \Proj{\textup{Proj}}

\def \gr{\textup{gr}}
\def \max{\textup{max}}

\def \conv{\textup{conv}}

\hfuzz5pt 

\theoremstyle{plain}
\newtheorem{THM}{Theorem}
\newtheorem{COR}{Corollary}

\theoremstyle{plain}
\newtheorem{Th}{Theorem}[section]
\newtheorem{Lem}[Th]{Lemma}
\newtheorem{Prop}[Th]{Proposition}
\newtheorem{Cor}[Th]{Corollary}

\theoremstyle{definition}
\newtheorem{Ex}[Th]{Example}
\newtheorem{Def}[Th]{Definition}
\newtheorem{Rem}[Th]{Remark}

\pagestyle{plain}

\begin{document}
\title{Crystal bases and Newton-Okounkov bodies}
\author{Kiumars Kaveh}
\date{\today}
\dedicatory{Dedicated to my teacher Prof. Siavash Shahshahani}

\thanks{The author is partially supported by a
Simons Foundation Collaboration Grant for Mathematicians and a National Science Foundation Grant.}

\begin{abstract}
Let $G$ be a connected reductive algebraic group.
We prove that the string parametrization of a crystal basis for a finite dimensional
irreducible representation of $G$ { extends to a natural valuation on the field of rational
functions} on the flag variety $G/B$, which is a highest term valuation corresponding to a 
coordinate system on a Bott-Samelson variety. This shows that the string polytopes associated to irreducible representations,
can be realized as Newton-Okounkov bodies for the flag variety. This is closely related to an earlier
result of A. Okounkov for the Gelfand-Cetlin polytopes of the symplectic group \cite{Ok-NP}.
As a corollary we { recover a multiplicativity property} of the canonical basis due to P. Caldero.
We generalize the results to spherical varieties. From these the existence of SAGBI bases for the homogeneous
coordinate rings of flag and spherical varieties, as well as their toric degenerations follow recovering results in 
\cite{Caldero, AB, Kaveh-Michigan}.\\
\end{abstract}

\maketitle

\noindent{\it Key words:} string polytope, Gelfand-Cetlin polytope,
crystal basis, string parametrization,
flag variety, Bott-Samelson variety, spherical variety, Newton-Okounkov body, 
SAGBI basis, toric degeneration.\\

\noindent{\it Subject Classification: } 14M15, 05E10, 14M27.

\setcounter{tocdepth}{1}
\tableofcontents

\section*{Introduction} \label{sec-intro}
Let $G$ be a connected reductive algebraic group over $\c$. In this paper we make a connection between
the theory of crystal bases for irreducible representations of $G$ and their string parameterizations on one hand, 
and the geometry of the flag variety of $G$ in connection with Gr\"obner theory and highest term of polynomials on the other hand.
{More precisely, we show that the
string parametrization of a (dual) crystal basis (due to Littelmann \cite{Littelmann} and Berenstein-Zelevinsky \cite{BZ-CB})
extends to a natural geometric valuation on the field of rational functions $\c(G/B)$, 
constructed out of a coordinate system on a Bott-Samelson variety, where we regard the elements of the 
irreducible representation as polynomials on the open cell in $G/B$ and hence rational functions on $G/B$.} 
This interpretation of the string parametrization shows that the string polytopes associated to irreducible representations
can be realized as Newton-Okounkov bodies for the flag variety of $G$. The notion of a Newton-Okounkov body is a far generalization of the notion of the Newton polytope of a toric variety (see \cite{Ok-BM, Ok-LC, KKh-affine, LM, KKh-NO}). We believe that this opens new doors to study the fundamental notion of a crystal basis/canonical basis in representation theory 
and we expect it to make some properties of the crystal bases more 
transparent. As an example, we readily deduce a multiplicativity property of the dual canonical basis 
for { the algebra of $U$-invariants} $\c[G]^U$ due to P. Caldero (\cite{Caldero}). From { this multiplicativity property} one then obtains toric degenerations of flag varieties and
spherical varieties recovering results in \cite{Caldero, Kaveh-Michigan, AB}.

The motivation for the main result of the paper goes back to a result of A. Okounkov who showed that
when $G = \Sp(2n, \c)$, the set of integral points in the Gelfand-Cetlin polytope of an irreducible
representation of $G$ can be identified with the collection of lowest terms of elements of this
representation regarded as polynomials on the open cell in the flag variety, and with respect to a natural coordinate system (\cite{Ok-NP}).

Below we briefly explain the key ingredients of the main result namely (1) crystal bases and their string parametrization, and (2) valuations and 
Newton-Okounkov bodies. 

Let $V_\lambda$ be a finite dimensional irreducible representation of $G$ with highest weight $\lambda$.
There is a remarkable basis for $V_\lambda$, consisting of weight vectors, called the {\it canonical basis/crystal basis}, which 
combinatorially encodes the action of $\Lie(G)$
(\cite{Lusztig}, also Section \ref{sec-crystal}).\footnote{The paper \cite{Lusztig} gives the first construction of the canonical basis (of the quantum enveloping algebra) 
for general reductive groups. We are interested in the specialization at $q = 1$ of the Lusztig canonical basis. The Kashiwara crystal bases are $q = 0$ case of 
the canonical basis and appeared in \cite{Kashiwara-crystal} in which their existence was proved for classical groups.}

Throughout the paper we denote the crystal basis for $V_\lambda$ by $\B_\lambda$ and the corresponding dual basis for $V_\lambda^*$ by $\B_\lambda^*$ which we will
refer to as the {\it dual crystal basis}.\footnote{In representation theory literature, a crystal basis usually refers to the specialization at $q = 0$ of the canonical basis for the quantum 
enveloping algebra of the group. Here we use this term for the corresponding basis for an irreducible representation of G itself. In this sense our use of the term crystal basis deviates from the literature.} 
Crystal bases play a fundamental role in
the representation theory of $G$. There is a nice parametrization of the elements of a (dual) crystal basis, called the {\it string parametrization}, by the set of integral points in a certain polytope in $\r^N$, where $N = \dim(G/B)$ is the number of positive roots (\cite{Littelmann}, \cite{BZ-CB}, see also Section \ref{sec-string-para}). This parametrization depends on a combinatorial choice namely a reduced decomposition
for the longest element $w_0$ in the Weyl group. That is, an $N$-tuple of simple reflections
$\S = (s_{\alpha_{i_1}}, \ldots, s_{\alpha_{i_N}})$ with
$$w_0 = s_{\alpha_{i_1}} \cdots s_{\alpha_{i_N}}.$$
Here $\alpha_1, \ldots, \alpha_r$ are { the simple roots} and $s_\alpha$ denotes the simple reflection corresponding to a simple root $\alpha$.
{ We denote by $\iota_\S: \B^*_\lambda \to \z^N$ the corresponding string parametrization for the dual crystal basis $\B^*_\lambda$}. 
The polytope associated to $V_\lambda$ and a reduced decomposition $\S$ is called a {\it string polytope} and we
denote it by $\Delta_\S(\lambda)$. The string polytopes are generalizations of the well-known Gelfand-Cetlin polytopes of representations of $\GL(n, \c)$ (\cite{GC}).

A main property of a string polytope is that the number of integral points in the polytope $\Delta_\S(\lambda)$ is equal to
$\dim(V_\lambda)$. Any dominant weight $\lambda$ gives a $G$-linearized line bundle $L_\lambda$ on $X$ and one knows that the space of 
sections $H^0(X, L_\lambda)$ is isomorphic to the dual representation $V_\lambda^*$. It follows that 
{ the degree of the line bundle} $L_\lambda$ is given by $N!$ times the
volume of $\Delta_\S(\lambda)$ (see Corollary \ref{cor-deg-G/P}).

On the other hand following the pioneering works of A. Okounkov in \cite{Ok-BM} and  \cite{Ok-LC}, the author and A. G. Khovanskii 
(\cite{KKh-NO, KKh-affine}), as well as Lazarsfeld and Mustata (\cite{LM}), developed a theory of convex bodies associated to
linear series on algebraic varieties. Let $X$ be a $d$-dimensional projective algebraic variety with { field of rational functions} $\c(X)$.
Let $L$ be a very ample line bundle on $X$ and let $R = R(L)$ denote the corresponding
ring of sections or homogeneous coordinate ring (see Section \ref{sec-Newton-Ok-bodies}).  
Let $v: \c(X) \setminus \{0\} \to \z^d$ be a valuation with
one-dimensional leaves (see Definition~\ref{def-valuation}) on the space
of rational functions $\c(X)$, and 
let $S = S(R, v)$ denote the associated value semigroup in
$\n \times \z^d$ (see \eqref{equ-def-S} in Section \ref{sec-Newton-Ok-bodies}).
We also let $\Delta = \Delta(R, v) \subset \r^d$ denote the {\it Newton-Okounkov body}
corresponding to $R$ and $v$ (Definition~\ref{def-Newton-Okounkov-body}). 
The convex body $\Delta$ has the property that 
{ the degree of} $L$ is given by $d!$ times the
volume of $\Delta(R(L))$ (Theorem \ref{th-asymp-Hilbert-Okounkov-body}, see also \cite{KKh-affine, LM, KKh-NO}).

Given a smooth point $p$ on $X$ and a regular system of parameters $u_1, \ldots, u_d$
in a neighborhood of $p$ one can define a valuation $v$ on $\c(X)$ with values in $\z^d$
as follows: Fix a total ordering on $\z^d$ respecting addition. Let $f$ be a polynomial in $u_1, \ldots, u_d$. Let
$c_k u_1^{k_1} \cdots {u_d}^{k_d}$ be the term in $f$ with the smallest exponent $k=(k_1, \ldots, k_d)$. Define
$v(f) = (k_1, \ldots, k_d)$. Also for a rational function $h = f/g$ where $f,g$ are {polynomials in the $u_i$}, 
define $v(h) = v(f) - v(g)$. One verifies that $v$ is a valuation on $\c(X)$ with values in $\z^d$, { which we 
call} the {\it lowest term valuation} with respect to the parameters 
$u_i$ (and the order on $\z^d$). Similarly, given a polynomial $f$ in the $u_i$, one can take the term $c_\ell u_1^{\ell_1} \cdots u_d^{\ell_d}$ in $f$  with the largest exponent $\ell = (\ell_1, \ldots, \ell_d)$. Then $v(f) = (-\ell_1, 
\ldots, -\ell_d)$ defines a valuation on $\c(X)$. We call it the {\it highest term valuation} with respect to the parameters $u_i$ (and the 
order on $\z^d$). 

Now let $X= G/B$ be the flag variety of $G$. Let $X_w \subset X$ denote the Schubert variety corresponding
to a Weyl group element $w$. 
A reduced decomposition $\S = (\alpha_{i_1}, \ldots, \alpha_{i_N})$,
$w_0 = s_{\alpha_{i_1}} \cdots s_{\alpha_{i_N}},$ { gives rise to a sequence of  Schubert varieties} in $G/B$:
$$\{o\} = X_{w_N} \subset \cdots \subset X_{w_0} = X,$$ 
where $w_k = s_{\alpha_{i_{k+1}}} \cdots s_{\alpha_{i_N}}$, $X_{w_k}$ is the Schubert variety corresponding to the Weyl group element $w_k$, 
and $o = eB$ is the unique $B$-fixed point in $X$.
Moreover, the reduced decomposition $\S$ gives a birational model $\tilde{X}_\S$ of $X$ called a Bott-Samelson variety and a 
coordinate system $\{t_1, \ldots, t_N\}$ at a point above $o$. In this coordinate system, for each $k$ the preimage of $X_{w_k}$ is 
given by $t_1 = \cdots = t_k = 0$ (see Section \ref{sec-Bott-Samelson}).

Let $v_\S$ be the highest term valuation on $\c(\tilde{X}_\S) \cong \c(X)$ associated to the coordinate system $t_1, \ldots, t_N$.
The main result of the paper is the following (Theorem \ref{th-main}):
\begin{THM} \label{THM-main}
The string parametrization for a dual crystal basis in $V_\lambda^* \cong H^0(X, L_\lambda)$
{ is the restriction of the valuation} $v_\S$. More precisely, let $\tau_\lambda$ denote a lowest weight vector in $H^0(X, L_\lambda)$. { For any 
$\sigma \in \B_\lambda^*$} write $\sigma = f_\sigma \tau_\lambda$. Then we have $\iota_\S(\sigma)=- v_\S(f_\sigma)$ where $\iota_\S$ denotes the 
string parametrization corresponding to the reduced decomposition $\S$.
It follows that the string polytope $\Delta_\S(\lambda)$ coincides with the
Newton-Okounkov body of the algebra of sections $R(L_\lambda)$ and the valuation
$v_\S$. 
\end{THM}

{ Theorem \ref{THM-main} can be generalized in a straightforward way to when $X=G/B$ and $V_\lambda^*$ are replaced with a Schubert variety $X_w$ and
a dual Demazure module $V_\lambda(w)^*$ respectively (see Remark \ref{rem-Schubert-main}).}

Let $X$ be a $d$-dimensional variety.
Given a sequence of subvarieties $$Y_{\bullet}: Y_d \subset Y_{d-1} \subset \cdots \subset Y_0 = X,$$ where each $Y_k$ has codimension $k$ and is 
non-singular along $Y_{k+1}$, one can construct a valuation $v_{Y_\bullet}$ on $\c(X)$ with one-dimensional leaves (see Example \ref{ex-Parshin-valuation}). 
It is related to and generalizes the construction of the lowest term valuation explained above. The highest weight valuation can be thought of as corresponding to a sequence of subvarieties at infinity (with respect to the chosen regular system of parameters). 
{ The valuations of the form $v_{Y_\bullet}$ constitute a large class of valuations on $\c(X)$ and one often is interested in Newton-Okounkov bodies with respect to such valuations (see for example \cite{LM} and \cite{KLM}). It is a natural question whether the highest term valuation $v_\S$ can be realized as a valuation corresponding to a sequence 
of subvarieties $Y_\bullet$ on the flag variety.
One candidate for such a sequence of subvarieties on $X = G/B$ can be defined as follows:} for each $k$ let $Y_k = w_0w_k^{-1}X_k$ be the Schubert variety of $w_k$ translated by $w_0w_k^{-1}$. In an earlier version of the paper it was claimed that the highest term valuation $v_{\S}$ coincides with the valuation $v_{Y_\bullet}$ corresponding to this sequence of translated Schubert varieties. After a discussion with Dave Anderson, the author realized that this is not true in general. 
{ Also later a calculation of Valentina Kiritchenko showed that for $G=\Sp(4, \c)$ the string polytope $\Delta_\S(\lambda)$ and the Newton-Okounkov polytope 
$\Delta(R(L_\lambda), v_{Y_\bullet})$ corresponding to the same reduced decomposition $\S$ might not even be combinatorially the same (see Remark \ref{rem-Anderson}).}

It is well-known that the algebra $\c[G]^U$ of unipotent invariants on $G$ decomposes as:
$$\c[G]^U = \bigoplus_{\lambda} V_\lambda^*.$$
There is a natural basis $\B^*$ for this algebra such that
$\B^*_\lambda = \B^* \cap V_\lambda^*$ { is the dual crystal basis for $V_\lambda^*$}.
More specifically, for each $\lambda$, $\B_\lambda^*$ is the dual of the basis $\B_\lambda$ for $V_\lambda$ 
consisting of the nonzero $bv_\lambda$, where $b$ lies in the specialization
at $q = 1$ of the Lusztig canonical basis (\cite{Lusztig}). 
In this paper we call $\B^*$ the {\it dual canonical basis} of the algebra $\c[G]^U$.
We observe that from the defining properties of a valuation and Theorem \ref{THM-main}
the following multiplicativity result (due to Caldero) readily follows
(\cite[Section 2]{Caldero} and Corollary \ref{cor-multi-prop-canonical-basis}):

\begin{COR} \label{COR-multi-prop}
Let $\lambda, \mu$ be two dominant weights. Take $b'^*, b''^* \in \B^*$ which lie in $\B_\lambda^*$ and $\B_\mu^*$ respectively. Then
the product $b'^*b''^* \in V^*_{\lambda + \mu} \subset \c[G]^U$ can be uniquely written as
$$b'^* b''^* = cb^* + \sum_j c_j b_j^*,$$ where $0 \neq c \in \c$ and
$b^*$, $b_j^*$ are in $\B^*_{\lambda+\mu}$ with $\iota_\S(b^*) = \iota_\S(b'^*) +
\iota_\S(b''^*)$, and $\iota_\S(b_j^*) < \iota_\S(b'^*) + \iota_\S(b''^*)$ whenever $c_j \neq 0$ (with respect to the 
lexicographic order). Here $\iota_\S: \B^*_\lambda \to \Delta_\S(\lambda) \cap \z^N$ is the string parametrization map 
(Section \ref{sec-string-para}).
\end{COR}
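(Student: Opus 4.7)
The plan is to derive the corollary as a formal consequence of Theorem \ref{THM-main} combined with the two defining features of a $\z^d$-valued valuation: multiplicativity on products, and extremality on finite sums (with equality when the extremum is uniquely attained). The whole argument should be short once Theorem \ref{THM-main} is granted.

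First I would verify that the product $b'^* b''^*$ actually lies in the single summand $V_{\lambda+\mu}^* \subset \c[G]^U$, rather than spreading across several isotypic components. This is a standard feature of the $U$-invariant ring: the pointwise product of two $U$-fixed vectors of weights $\lambda$ and $\mu$ is $U$-fixed of weight $\lambda+\mu$, and the $U$-fixed subspace of $V_\nu^*$ is one-dimensional only when $\nu = \lambda+\mu$, so the product lands in $V_{\lambda+\mu}^*$. In particular $b'^* b''^*$ may be regarded as a section of $L_{\lambda+\mu}$, and $v_\S$ is defined on it.

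Next I would use Theorem \ref{THM-main} to identify string parameters with values of $v_\S$ and combine this with the multiplicativity of the valuation to get
$$v_\S(b'^* b''^*) \;=\; v_\S(b'^*) + v_\S(b''^*) \;=\; \iota_\S(b'^*) + \iota_\S(b''^*).$$
Expanding $b'^* b''^* = \sum_i c_i b_i^*$ in the dual canonical basis $\B^*_{\lambda+\mu}$ and reapplying Theorem \ref{THM-main} gives $v_\S(b_i^*) = \iota_\S(b_i^*)$, and these string parameters are pairwise distinct because the string parametrization is injective on any crystal basis. The valuation-on-a-sum rule then forces the extremal value $\iota_\S(b'^*) + \iota_\S(b''^*)$ to be attained by exactly one $b_i^*$ with $c_i \neq 0$; call it $b^*$ with coefficient $c$. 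All other nonzero summands must satisfy the claimed strict inequality.

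Once Theorem \ref{THM-main} is in hand the argument is essentially automatic, so the main obstacle is a bookkeeping one: I would need to pin down the convention used in Section \ref{sec-valuation} for the direction of the valuation-on-a-sum rule (whether the extremum is the min or the max in the chosen lex order) and check that this matches the direction of the lex order appearing in the statement of the corollary, so that the unique extremal summand sits on the correct side of the inequality. Apart from this convention check no new ideas enter, and the corollary ends up as a clean illustration of how the geometric realization of the string parametrization as a valuation yields multiplicative properties of the dual canonical basis for free.
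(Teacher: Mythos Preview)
Your approach is correct and essentially identical to the paper's: the paper applies the general Proposition \ref{prop-multi-prop} to the valuation ${\bf v}_\S$ constructed on the full algebra $\c[G/U]$ in Section \ref{sec-multi-prop}, which is exactly your valuation-multiplicativity-plus-sum-rule argument packaged at the level of the whole algebra rather than a single graded piece. Two small remarks: your justification for $b'^* b''^* \in V_{\lambda+\mu}^*$ is slightly garbled (the elements $b'^*, b''^*$ are not $U$-fixed vectors in the left $G$-module sense; the correct reason is that each $V_\lambda^* \subset \c[G]^U$ is the $\lambda$-eigenspace for the \emph{right} $T$-action, and these weights add under multiplication), and your sign/direction worry is resolved by the minus sign in Theorem \ref{th-main}, $\iota_\S = -v_\S$, which converts the strict $>$ of Proposition \ref{prop-multi-prop} into the strict $<$ in the corollary.
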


Let $X$ be a projective $G$-variety together with a $G$-linearized very ample line bundle $L$.
Consider the ring of sections $R(L) = \bigoplus_{k \geq 0} H^0(X, L^{\otimes k}).$
It is a graded $G$-algebra. To $(X, L)$ one associates a polytope $\Delta_{mom}(X, L)$ in { the positive Weyl chamber} $\Lambda_\r^+$, 
called the {\it moment polytope}, which encodes information about the
asymptotic behavior of irreducible representations appearing in $R(L)$ (see Section \ref{sec-spherical}).
When $X$ is smooth $\Delta_{mom}(X, L)$ can be identified with the Kirwan polytope (moment polytope) of $X$ regarded as a Hamiltonian space
for the action of a maximal compact subgroup $K$ of $G$.

Recall that { a normal $G$-variety} $X$ is {\it spherical} if a Borel subgroup (and hence any Borel subgroup) has a
dense orbit. A quasi-projective normal $G$-variety $X$ is spherical if for any $G$-linearized line bundle $L$, the
space of sections $H^0(X, L)$ is a multiplicity-free $G$-module. By the Bruhat decomposition flag varieties are spherical.
Let $X$ be a projective spherical $G$-variety. 
Fix a reduced decomposition $\S = (\alpha_{i_1}, \ldots, \alpha_{i_N})$.
In \cite{Ok-spherical} (motivated by a question of A. G. Khovanskii) and in \cite{AB}, the authors associate a larger polytope
$\Delta_\S(X, L)$ to $(X, L)$ defined by:
$$\Delta_\S(X, L) = \bigcup_{\lambda \in \Delta_{mom}(X, L)} \{(\lambda, x) \mid x \in \Delta_\S(\lambda) \}.$$
$\Delta_\S(X, L)$ has the property that its volume gives { the degree of the line bundle} $L$. 
This resembles (and generalizes) the Newton polytope of a
toric variety (see also \cite{Kaz, Valentina, Kaveh-JLT, KKh-Arnold}). In Section \ref{sec-spherical} we show { the following}
(Corollary \ref{cor-moment-polytope-Newton-Ok-polytope}
and Corollary \ref{cor-Newton-Okounkov-polytope-spherical}):
\begin{THM} \label{THM-1}
{ Let $X$ be a projective spherical $G$-variety and $L$ a very ample $G$-linearized line bundle on $X$.}
Then both polytopes $\Delta_{mom}(X, L)$ and $\Delta_\S(X, L)$
can be realized as Newton-Okounkov bodies for the ring of sections $R(L)$ with respect to
certain natural valuations.
\end{THM}

Let $O$ be the open $G$-orbit in $X$ and let us assume that there is a so-called wonderful compactification $Y$ of $O$ (see \cite[Definition 30.1]{Timashev}).
A wonderful compactification for $O$ exists if for example $O = G/H$ with $H = N_G(H)$. Then $Y$ is birationally isomorphic to $X$. We expect that: the valuation corresponding to $\Delta_{mom}(X, L)$ in Theorem \ref{THM-1} coincides with the valuation associated to a sequence of $G$-invariant subvarieties in $Y$.

Fix a total order $<$ on $\z^n$ respecting addition. Let $A$ be a subalgebra
of the polynomial ring $\c[x_1, \ldots, x_n]$. A subset $f_1, \ldots, f_r \in A$ is called a
{\it SAGBI basis} for $A$ (Subalgebra Analogue of Gr\"{o}bner Basis for Ideals) if
the set of initial terms of the $f_i$ (with respect to $<$) generates the semigroup of initial terms in
$A$ (in particular this semigroup is finitely generated). Given a SAGBI basis for $A$ one can represent each
$f \in A$ as a polynomial in the $f_i$ via a simple classical algorithm (known as the {\it subduction algorithm}).
There are not many examples of subalgebras known to have a SAGBI basis. It is an important unsolved problem to determine which subalgebras have a SAGBI basis.

We generalize the notion of SAGBI basis to the context of valuations on graded algebras in Section \ref{sec-SAGBI}
(Definition \ref{def-SAGBI-valuation}). \footnote{Around the same time the first version of this paper was posted in arXiv.org the paper \cite{Manon} appeared in which independently the same notion as in Definition \ref{def-SAGBI-valuation} is introduced under the name {\it subductive set}.}
In Section \ref{sec-spherical} we see that (Corollary \ref{cor-SAGBI-basis-spherical}):
\begin{COR}
The ring of sections of any $G$-linearized very ample line bundle $L$ on a
projective spherical variety has a SAGBI basis. It follows that $(X, L)$ can be degenerated to the
toric variety (together with a $\q$-divisor) associated to the polytope $\Delta_\S(X, L)$.
\end{COR}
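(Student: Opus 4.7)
The plan is to combine the Newton--Okounkov body realization of $\Delta_\S(X, L)$ from the previous theorem with sphericity to identify the value semigroup of $R(L)$ with the full lattice of a rational polyhedral cone. From this, finite generation (hence the SAGBI basis) and the toric degeneration will follow by standard means.

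I would begin by noting that, because $X$ is spherical, $R(L)$ decomposes as a multiplicity-free $G$-module
\[
R(L) = \bigoplus_{k \geq 0} \bigoplus_{\lambda \in k\mu(X,L) \cap \Lambda^+} V_\lambda^*.
\]
Let $v$ be the valuation from the previous theorem; by Theorem \ref{THM-main}, its restriction to each summand $V_\lambda^* \subset R(L)_k$ coincides with the string parametrization, whose image is exactly $\Delta_\S(\lambda) \cap \z^N$. Recording the degree $k$ and the highest weight $\lambda$ alongside the string coordinate, I would then conclude that the value semigroup $S(R(L), v) \subset \z_{\geq 0} \times \Lambda \times \z^N$ coincides precisely with the set of integral points in the cone over $\Delta_\S(X, L)$.

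Next, since $\mu(X, L)$ is a rational polytope and each string polytope $\Delta_\S(\lambda)$ is cut out by rational linear inequalities depending linearly on $\lambda$, the polytope $\Delta_\S(X, L)$ is rational; its cone is therefore rational polyhedral, and Gordan's lemma immediately yields that $S(R(L), v)$ is finitely generated. Lifting any finite set of semigroup generators to homogeneous elements of $R(L)$ produces a finite SAGBI basis in the sense of Section \ref{sec-SAGBI}.

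Finally, the SAGBI basis gives the standard flat degeneration (via the Rees algebra of the valuation filtration) of $R(L)$ to the semigroup algebra $\c[S(R(L), v)]$; taking $\textup{Proj}$ produces a flat family over $\mathbb{A}^1$ whose generic fiber is $(X, L)$ and whose special fiber is the projective toric variety associated to the cone over $\Delta_\S(X, L)$. When $\Delta_\S(X, L)$ has non-integral vertices the polytope determines only a $\q$-divisor rather than an integral line bundle on this toric variety, accounting for the parenthetical in the statement. The main obstacle is the first step: showing the value semigroup saturates the full lattice cone of the Newton--Okounkov body. This generally fails for arbitrary varieties, and it is precisely sphericity (through multiplicity-freeness) that forces every weight and every string coordinate predicted by $\Delta_\S(X, L)$ to be genuinely realized by a dual crystal basis element of some $V_\lambda^*$.
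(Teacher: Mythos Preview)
Your proposal is correct and follows essentially the same route as the paper. The paper packages the key step---that the value semigroup of $(R(L),\tilde v_\S)$ equals all lattice points in the rational polyhedral cone over $\Delta_\S(X,L)$---as Theorem~\ref{th-Newton-Ok-polytope-spherical} (using Brion's decomposition for normal spherical varieties together with Theorem~\ref{th-main} and Littelmann's Theorem~\ref{th-Littelmann}), then invokes Gordan's lemma and Proposition~\ref{prop-SAGBI-graded-algebra} for the SAGBI basis and Corollary~\ref{cor-toric-degen} (the Rees-type construction) for the toric degeneration; this is exactly the chain of reasoning you outline.
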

This recovers toric degeneration results in \cite{AB}, \cite{Caldero} and \cite{Kaveh-Michigan}.

The interesting paper of \cite{AB} constructs certain toric
degenerations for projective spherical varieties of
any connected reductive algebraic group. This in particular includes all the flag varieties. Note that for
a given variety, in general there can exist many different toric
degenerations associated to different polytopes. As Alexeev and Brion point out, their
construction relies on Caldero's multiplicativity property of the
canonical basis and string parametrization (\cite[p.~4]{AB}) which is
purely representation theoretic. As far as the author knows, the
Alexeev-Brion result does not give a "geometric" interpretation of the
string parametrization (and hence a string polytope) for irreducible
representations of a reductive group. The main result in the present paper interprets the string polytope of an
irreducible representation as a Newton-Okounkov body associated to a natural
geometric valuation on the flag variety $G/B$.
 
One of the applications of the main result of the paper is that the toric degeneration
results of \cite{GL}, \cite{Caldero} and \cite{AB} (also several others including the author \cite{Kaveh-Michigan}) 
fit into and can be recovered form the more recent and (very)
general framework of toric degenerations associated to valuations and
Newton-Okounkov bodies. This general framework for toric degenerations has been treated systematically 
in \cite{Anderson} (see also the preprint \cite[Section 5.6]{KKh-affine}). The earlier paper of Teissier \cite{Teissier}
gives the most general setup for toric degenerations associated to valuations. All the above degenerations come from
degenerating an algebra to the associated graded of a filtration.

It is expected that the Gelfand-Cetlin and more generally the string polytopes carry a lot of information about the geometry of the flag variety (and more generally spherical varieties). 
In fact, there is a general philosophy that these polytopes play a role for the flag variety similar to the role of Newton polytopes for toric varieties. The results of this paper provide evidence in this direction. More evidence for this similarity is obtained in the recent work of 
V. Kiritchenko, E. Smironov and V. Timorin who made an interesting connection between the combinatorics of the faces of the Gelfand-Cetlin polytopes and Schubert calculus in type A (\cite{Valentina, KST}).

{ As pointed out by one of the referees it is interesting to investigate whether the main result of the paper works in the generality of Kac-Moody algebras and their flag varieties, because essentially all the statements used here hold in that setting. One missing piece is that in the general Kac-Moody setting there is no longest element in the Weyl group and hence no reduced decomposition for such a longest element. As the referee suggests probably the correct substitute for a reduced decomposition is a convex order (in the finite type, 
convex orders are in one-to-one correspondence with reduced decomposition for the longest element). In particular it would be interesting to extend the main theorem in the paper (Theorem \ref{th-main}) to affine type.}

To make the paper more accessible and easier to read we have tried to include much of the background material (Sections 
\ref{sec1}, \ref{sec2} and \ref{sec3}).

\vspace{.4cm}
\noindent{\bf Acknowledgment:} The author would like to thank Dave Anderson, Jim Carrell, Askold
Khovanskii, Valentina Kiritchenko, Tatiyana Firsova, Macej
Mizerski and Jochen Kuttler for helpful discussions. In particular, Jochen Kuttler provided the proof of Lemma \ref{lem-pure-tensor}. 
Moreover, I am much thankful to Joel Kamnitzer who explained the key properties of crystal bases to me. 
{ Finally, the author is grateful to the anonymous referees whose comments and remarks greatly improved the content and exposition of the paper.}\\

\noindent{\bf Notation:} Throughout the paper we will use the
following notation: 
\begin{itemize}
\item[-] $G$ is a connected reductive algebraic group over
$\c$, $B$ a Borel subgroup, $T$ { a maximal torus and $U$ the maximal unipotent subgroup of $B$.} 
\item[-] $B^{-}$ and $U^{-}$ are the opposite subgroups of $B$ and $U$ respectively.
\item[-] $\Phi = \Phi(X, T)$ denotes the root system with $\Phi^+ = \Phi^+(X, T)$ the subset of positive
roots for the choice of $B$.  
\item[-] $\alpha_1, \ldots, \alpha_r$ denote the simple roots where $r$ is the
semi-simple rank of $G$. 
\item[-] $W$ is the Weyl group of $(G,T)$. The simple reflection associated with
a simple root $\alpha$ is denoted by $s_\alpha$. 
\item[-] $w_0$ is the unique longest element in $W$. $N$ denotes the length of $w_0$ which is equal to the number of
positive roots as well as the dimension of the flag variety $G/B$.
\item[-] $E_\alpha$, $F_\alpha$ are the Chevalley generators for a root $\alpha$, which { are generators} for
the root subspaces $\Lie(G)_\alpha$ and $\Lie(G)_{-\alpha}$ respectively. 
\item[-] $U_\alpha = \{ \exp(tE_\alpha) \mid t \in \c \}$,  $U^-_{\alpha} = \{ \exp(tF_\alpha) \mid t \in \c \}$ 
denote the root subgroups corresponding to a root $\alpha$.
\item[-] {$\Lambda$ is the weight lattice of $G$. We denote the rank of $\Lambda$, equal to $\dim(T)$, by $n$. 
$\Lambda^+$ is the subset
of dominant weights and $\Lambda_\r = \Lambda \otimes_{\z} \r$. The cone generated by $\Lambda^+$ is the positive Weyl chamber denoted by $\Lambda^+_\r$.}
\item[-] $V_\lambda$ denotes the irreducible $G$-module corresponding to a dominant weight $\lambda$. Also
$v_\lambda$ denotes a highest weight vector in $V_\lambda$. 
\item[-] For a dominant weight $\lambda$, $-w_0\lambda$ is denoted by $\lambda^*$.  It is dominant and
$V_\lambda^* \cong V_{\lambda^*}$.
\item[-] $o = eB$ is the unique $B$-fixed point
in the flag variety $G/B$. 
\item[-] $C_w$, $X_w$ denote the
Schubert cell and the Schubert variety in $G/B$ corresponding to $w \in W$ respectively.
\end{itemize}

An $N$-tuple of simple roots $\S = (\alpha_{i_1}, \ldots, \alpha_{i_N})$ is called a
{\it reduced decomposition for the longest element $w_0$} if $w_0 = s_{\alpha_{i_1}} \cdots s_{\alpha_{i_N}}.$
\\
\section{Valuations and Newton-Okounkov bodies} \label{sec1}
{A general reference for material in this section is \cite{KKh-NO}.}
\subsection{Valuations} \label{sec-valuation}
Let $V$ be a vector space over $\c$ and let $\Gamma$ be a set with a total order $<$.

\begin{Def}[Pre-valuation]  \label{def-pre-valuation}
A function $v: V \setminus \{0\} \to \Gamma$ is a {\it pre-valuation with
values in $\Gamma$} if:
\begin{enumerate}
\item[(i)] $v(f+g) \geq \min\{v(f), v(g)\}$, for all nonzero $f, g \in V$.
\footnote{Some authors may use the axiom $v(f+g) \leq \max\{v(f), v(g)\}$ instead. It is
equivalent to ours by considering the reverse order on $\Gamma$.}
It follows that if $v(f) \neq v(g)$ then $v(f+g) = \min\{v(f), v(g) \}$.
\item[(ii)] $v(c f) = v(f)$, for all nonzero $f \in V$ and nonzero $c \in \c$.
\end{enumerate}
{ One often extends $v$ to the whole $V$ by defining $v(0) = \infty$. From the definition it follows that for any $a \in \Gamma$, the sets 
$\{f \mid v(f) > a\}$ and $\{f \mid v(f) \geq a\}$ are vector subspaces of $V$.}
For $a \in \Gamma$ consider the quotient vector space,
$$F_a = \{f \mid v(f) \geq a\} / \{f \mid v(f) > a\}.$$ We call this the {\it leaf at $a$}.
The pre-valuation $v$ is said to have {\it one-dimensional leaves}
if for any $a \in \Gamma$, the leaf $F_a$ is at most one-dimensional. Equivalently,
$v$ has one-dimensional leaves, if whenever $v(f) = v(g)$, for some $f,g \in V$,
then there is $c \neq 0$ such that $v(g - c f) > v(g)$. 
\end{Def}

Next, let $A$ be an algebra over $\c$ and
let $\Gamma$ be a commutative semigroup totally ordered with an ordering $<$ respecting
the semigroup operation (which we write additively). In this paper we will always assume that $\Gamma$
is a free abelian group of finite rank.

\begin{Def}[Valuation] \label{def-valuation}
A pre-valuation on $A$ with values in $\Gamma$ is called a {\it valuation}
if moreover it satisfies: $v(fg) = v(f) + v(g)$, for all nonzero $f, g \in A$.
\end{Def}

\begin{Def}
Given an algebra $A$ with a valuation $v$ with values in $\Gamma$, it is easy to see that
$$\{ v(f) \mid f \in A \setminus \{0\}\},$$ is a semigroup in $\Gamma$. One calls it
the {\it value semigroup of the pair $(A, v)$}.
\end{Def}

\begin{Ex} \label{ex-valuation-curve}
Let $X$ be an algebraic curve over $\c$ with { field of rational functions} $\c(X)$.
Let $p$ be a smooth point on $X$ and for any $0 \neq f \in \c(X)$ define
$v(f)$ to be the order of vanishing of $f$ at $p$ (zero or pole). Then $v$ is a valuation (with
one-dimensional leaves) on $\c(X)$ and with values in $\z$ (with the usual ordering of numbers).
\end{Ex}

The previous example generalizes to higher dimensional varieties:

\begin{Ex}[Highest term and lowest term valuations] \label{ex-Grobner-valuation}
{ Let $X$ be a $d$-dimensional variety over $\c$ with $\c(X)$ its field of rational functions. Given a smooth point $p$ on $X$ and a regular system of parameters $u_1, \ldots, u_d$
in a neighborhood of $p$, as discussed in the introduction, we can define a {\it lowest term valuation} $v$ on $\c(X)$. Fix an ordering on $\z^d$. 
Let $f(u_1, \ldots, u_d) = \sum_{j=(j_1, \ldots, j_d)} c_j u_1^{j_1} \cdots u_d^{j_d}$ be a polynomial in the $u_i$. Then $v(f) = (k_1, \ldots, k_d)$ where $k = (k_1, \ldots, k_d)$
is the smallest among $j=(j_1, \ldots, j_d)$ with $c_j \neq 0$. The {\it highest term valuation} on $\c(X)$ is 
defined similarly.}
\end{Ex}

\begin{Ex}[Parshin valuation] \label{ex-Parshin-valuation}
More generally, one can construct a valuation out of a sequence of subvarieties in $X$. Let
$$\{p\} = Y_d \subset \cdots \subset Y_0 = X $$ be a sequence of closed irreducible subvarieties in $X$
such that $\dim(Y_k) = d-k$ and $Y_k$ is non-singular along $Y_{k+1}$, { that is the local ring $\mathcal{O}_{Y_k, Y_{k+1}}$ is a regular local ring}, for all $k$.
Sometimes such a sequence of subvarieties is called a {\it Parshin point} on the variety $X$ (\cite{Parshin}).
A collection $u_1, \ldots, u_d$ of rational functions on $X$ { is said to be a} {\it system of parameters} about such a sequence
if for each $k$, ${u_k}_{|Y_k}$ is a well-defined not identically zero rational function on $Y_k$
and has a zero of first order on the
hypersurface  $Y_{k+1}$, {in other words, $u_k$ represents a generator of the maximal ideal of $\mathcal{O}_{Y_k, Y_{k+1}}$}. 
Given a sequence of subvarieties and a system of parameters
$u_1, \ldots, u_d$ as above, one defines a valuation $v$ on $\c(X)$ with one-dimensional leaves and with values in $\z^d$ (ordered lexicographically): 
Take $0 \neq f \in \c(X)$, then $v(f) = (k_1, \ldots, k_d)$ where the $k_i$ are defined as follows. $k_1$ is the order of vanishing of
$f$ on $Y_1$. Now $f_1 = (u_1^{-k_1}f)_{|Y_1}$ is a
well-defined, not identically zero rational function on $Y_1$. Then
$k_2$ is the order of vanishing of $f_1$ on $Y_2$ and so on
for $k_3$ etc. { (In fact, the assumption of non-singularity of the $Y_i$ along $Y_{i+1}$ is not crucial and one can avoid it by successive normalizations.)}
\end{Ex}

\begin{Rem} \label{rem-Grobner-val-Parshin-val}
Example \ref{ex-Grobner-valuation} is a special case of Example \ref{ex-Parshin-valuation} where for each $k>0$,
we take $Y_k$ to be (the irreducible component of) the zero locus of $\{u_1, \ldots, u_k\}$
containing $p$. Conversely, by taking an appropriate resolution of $X$ at $p$ and a suitable system of parameters
in the resolution, one can realize the valuation constructed out of a sequence of subvarieties as a valuation coming from a system 
of parameters at a smooth point. 
\end{Rem}

Next, we state { some easy general facts} about pre-valuations on vector spaces and valuations on algebras. For completeness we include the short proofs. Let $v$ be a pre-valuation on a vector space $V$ with values in a totally ordered set $\Gamma$.

\begin{Prop} \label{prop-valuation-subspace}
Suppose $f_1, \ldots, f_s \in V$ are such that $v(f_1), \ldots, v(f_s)$ are distinct. Then:
(1) the $f_i$ are linearly independent. (2) If $f = \sum_{i=1}^s c_i f_i$ with $0 \neq c_i \in \c$ then
$v(f) = \min \{ v(f_1), \ldots, v(f_s)\}$.
\end{Prop}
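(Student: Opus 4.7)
The plan is to prove both parts simultaneously by induction on $s$, since the two assertions feed into each other: to evaluate $v$ on a linear combination we need it to be nonzero, which is exactly what part 1) guarantees.

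For the base case $s=1$, part 1) is trivial because $f_1 \neq 0$ (its valuation lies in $\Gamma$), and part 2) is immediate from axiom (ii) of Definition \ref{def-pre-valuation}, namely $v(c_1 f_1) = v(f_1)$.

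For the inductive step, assume both assertions hold for any collection of fewer than $s$ vectors with distinct valuations. To establish part 2), relabel so that $v(f_1) = \min_i v(f_i)$, which is possible since the values are pairwise distinct. Set $g = \sum_{i=2}^s c_i f_i$. By the inductive hypothesis applied to the $(s-1)$-tuple $f_2, \ldots, f_s$, part 1) gives linear independence of $f_2, \ldots, f_s$, so $g \neq 0$, and then part 2) yields $v(g) = \min_{i \geq 2} v(f_i)$. By choice of labeling this strictly exceeds $v(f_1) = v(c_1 f_1)$. Since these valuations are distinct, the consequence of axiom (i) recorded in the definition (if $v(a) \neq v(b)$ then $v(a+b) = \min\{v(a), v(b)\}$) gives
\[
v(f) \;=\; v(c_1 f_1 + g) \;=\; \min\{v(c_1 f_1),\, v(g)\} \;=\; v(f_1) \;=\; \min_i v(f_i),
\]
proving part 2) for $s$.

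For part 1) at size $s$, suppose toward a contradiction that $\sum_{i=1}^s c_i f_i = 0$ with some $c_i \neq 0$. Let $I = \{i : c_i \neq 0\}$ and restrict to this index set; the vectors $f_i$ for $i \in I$ still have pairwise distinct valuations, and $|I| \leq s$. Applying part 2) (just proved for the current size, or the inductive hypothesis if $|I| < s$) to $\sum_{i \in I} c_i f_i$ would give its valuation as $\min_{i \in I} v(f_i) \in \Gamma$; but this sum is $0$, which is not in the domain of $v$. This contradiction completes the induction.

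There is no real obstacle here; the only thing to be careful about is the interplay between the two parts, i.e., arranging the induction so that part 1) at stage $s-1$ is available to guarantee that the auxiliary vector $g$ in the proof of part 2) at stage $s$ is nonzero, so that $v(g)$ is defined.
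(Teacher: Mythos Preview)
Your proof is correct and follows essentially the same approach as the paper: part 2) is established by induction on $s$ via the strict form of axiom (i), exactly as the paper indicates. The only organizational difference is that the paper proves part 1) directly---isolating the term of smallest valuation and obtaining a contradiction from $v(a+b)\geq\min\{v(a),v(b)\}$---rather than deducing it from part 2), but the underlying argument is the same.
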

\begin{proof}
(1) Let $\sum_{i=1}^s f_i = 0$, be a nontrivial linear relation between the $f_i$.
Let $\alpha_i = v(f_i)$, $i=1, \ldots, s$. Without loss of generality assume $c_1 \neq 0$ and
$\alpha_1 < \cdots < \alpha_s$. We can rewrite the linear relation as $c_1 f_1 = -\sum_{i=2}^s c_if_i$.
But this is not possible as $v(c_1 f_1) = \alpha_1$ while $v(-\sum_{i=2}^s c_i f_i) \geq \alpha_2$. This
proves (1). Part (2) follows by induction on $s$ and using the property (i) in the definition of pre-valuation (Definition \ref{def-pre-valuation}).
\end{proof}

\begin{Prop} \label{prop-basis-distinct-valuation}
Let $V$ be finite dimensional, moreover assume that the valuation $v$ has one-dimensional leaves. 
(1) There exists a basis $\B$ for $V$
such that all the $v(b)$, $b \in \B$ are distinct. (2) $\dim(V) = \# v(V \setminus \{0\})$.
\end{Prop}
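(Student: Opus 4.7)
The plan is to use the one-dimensional leaves hypothesis to recursively ``clear'' the leading term of any vector by subtracting a suitable multiple of a chosen representative, and to combine this with Proposition \ref{prop-valuation-subspace} to turn the resulting collection into a basis of $V$.

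First I would show that the value set $v(V \setminus \{0\})$ is finite and has cardinality at most $\dim V$. For each $a \in v(V \setminus \{0\})$, pick some $b_a \in V$ with $v(b_a) = a$. Any finite subcollection of $\{b_a\}_a$ is linearly independent by part (1) of Proposition \ref{prop-valuation-subspace}, since the corresponding values are by construction all distinct. As $V$ is finite dimensional, this forces $\# v(V \setminus \{0\}) \leq \dim V < \infty$. Enumerate the values as $a_1 < a_2 < \cdots < a_n$ and denote the chosen vectors by $b_1, \ldots, b_n$.

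Next I would show that $\{b_1, \ldots, b_n\}$ spans $V$, which together with linear independence gives both (1) and (2). Given a nonzero $f \in V$, set $v(f) = a_j$ for some $j$. Since $v(b_j) = a_j = v(f)$ and the leaf $F_{a_j}$ is one-dimensional, the equivalent formulation of the one-dimensional leaves property in Definition \ref{def-pre-valuation} yields a scalar $c_j \neq 0$ with $v(f - c_j b_j) > a_j$. If $f - c_j b_j = 0$ we are done; otherwise $v(f - c_j b_j) = a_k$ for some $k > j$, and we repeat the procedure with $f - c_j b_j$ in place of $f$.

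The key point, and the only thing that requires attention, is termination of this iteration. After each step the valuation of the remainder strictly increases, but the value set is the finite totally ordered chain $a_1 < \cdots < a_n$; so the process must stop within at most $n - j + 1$ steps, producing an expression $f = \sum_{i} c_{j_i} b_{j_i}$ with $j_1 < j_2 < \cdots$. Hence $\{b_1, \ldots, b_n\}$ is a basis of $V$, all of whose elements have distinct valuations, proving (1), and $\dim V = n = \# v(V \setminus \{0\})$, proving (2).
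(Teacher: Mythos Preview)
Your proof is correct and takes a genuinely different route from the paper's. The paper begins with an arbitrary basis of $V$, sorts it by valuation, and repeatedly replaces an element $b_{t+1}$ with $b_{t+1} - c b_t$ (using the one-dimensional leaves property) to lengthen the strictly increasing initial segment of values; once the whole basis has distinct values, part (2) follows from Proposition~\ref{prop-valuation-subspace}. You instead start from the value set, pick one representative per value, get linear independence for free from Proposition~\ref{prop-valuation-subspace}(1), and then prove spanning by a subduction-style reduction whose termination is transparent because the value set has already been shown to be finite.

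What your approach buys is a cleaner termination argument (the paper's iterative modification needs a small multiset or counting observation to see that the process really ends when several basis vectors share the same value), and it dovetails nicely with the subduction algorithm discussed later in Section~\ref{sec-SAGBI}. The paper's approach, on the other hand, is closer in spirit to Gaussian elimination and makes clear that \emph{any} given basis can be upgraded to one with distinct values by elementary operations.
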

\begin{proof}
(1) { Suppose $\B = (b_1, \ldots, b_s)$ is an ordered basis for $V$ with 
$v(b_1) \lneqq \cdots \lneqq v(b_t) \leq \cdots \leq v(b_s)$. We will construct another ordered basis
$\B' = (b'_1, \ldots, b'_s)$ such that $v(b'_1) \lneqq \cdots \lneqq v(b'_{t+1}) \leq \cdots \leq v(b'_s)$}. If $v(b_t) \neq v(b_{t+1})$ then just take
$\B' = \B$. Otherwise, there is a scalar $c \neq 0$ such that $v(b_{t+1} - cb_t) \gneqq v(b_t)$.
Now replace $b_{t+1}$ with $b_{t+1} - cb_t$ and sort the new set in increasing order if necessary, to obtain the ordered basis $\B'$. Continuing this procedure we will arrive at a basis such that all the values of valuation on
the basis are distinct. Part (2) follows from (1) and Proposition \ref{prop-valuation-subspace}(1).
\end{proof}

Let $A$ be an algebra and let $\B$ be a vector space basis for $A$ over $\c$. Moreover assume that
the values of $v$ on $\B$ are all distinct. The following multiplicativity property is a { straightforward}
corollary of defining properties of a valuation.
\begin{Prop}[Multiplicativity property] \label{prop-multi-prop}
For any two $b'$, $b'' \in \B$, the element $b' b''$ can be written uniquely
as $$b' b'' = c b + \sum_i c_i b_i,$$ where
$b$ and the $b_i \in \B$, $c$ and the $c_i$ are nonzero scalars,
$v(b) = v(b') + v(b'')$ and $v(b_i) \gneqq v(b') + v(b'')$ for all $i$.
\end{Prop}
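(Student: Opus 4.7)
The plan is to expand $b'b''$ in the basis $\B$ and then identify the unique term whose valuation realizes the minimum, which by multiplicativity of $v$ must equal $v(b') + v(b'')$.

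First I would use that $\B$ is a basis for $A$ to write $b'b'' = \sum_k c_k \beta_k$ as a finite linear combination with $\beta_k \in \B$ and nonzero scalars $c_k \in \c$; this expansion is automatically unique. Next, since by hypothesis the values of $v$ on the elements of $\B$ are all distinct, the numbers $v(\beta_k)$ appearing in this sum are pairwise distinct. Proposition \ref{prop-valuation-subspace}(2) then applies and yields
\[
v\Bigl(\sum_k c_k \beta_k\Bigr) = \min_k v(\beta_k).
\]

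On the other hand, $v$ is a valuation on the algebra $A$, so by Definition \ref{def-valuation} we have $v(b'b'') = v(b') + v(b'')$. Combining these two equalities, $\min_k v(\beta_k) = v(b') + v(b'')$. Because the $v(\beta_k)$ are distinct, this minimum is attained by exactly one index; call the corresponding basis element $b$ and its coefficient $c$, and relabel the remaining $\beta_k$ as the $b_i$ with coefficients $c_i$. Then by construction $v(b) = v(b') + v(b'')$ and $v(b_i) \gneqq v(b') + v(b'')$ for every $i$, as required.

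No real obstacle arises: the entire statement is a formal consequence of the definition of a valuation (multiplicativity) together with Proposition \ref{prop-valuation-subspace}(2) (behavior on sums with distinct valuations). The only point one must be careful with is noting that the expansion of $b'b''$ in the basis $\B$ is unique from the start, so the decomposition described in the proposition is unique up to the obvious grouping of the minimum-valuation term separately from the rest.
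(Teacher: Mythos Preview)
Your proof is correct and follows exactly the approach the paper intends: the paper's own proof is the one-line remark ``Follows directly from Proposition \ref{prop-valuation-subspace}(2),'' and what you have written is precisely the unpacking of that line together with the multiplicativity axiom $v(b'b'')=v(b')+v(b'')$.
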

\begin{proof}
{ The proposition follows directly from} Proposition \ref{prop-valuation-subspace}(2).
\end{proof}

In Section \ref{sec-multi-prop} we will see that the multiplicativity property of the so-called
dual canonical basis for the algebra $\c[G/U]$ of a reductive group $G$, is a special case of the above.

\subsection{Newton-Okounkov bodies} \label{sec-Newton-Ok-bodies}
In this section we consider valuations on the rings of sections of line bundles. We recall from \cite{KKh-NO} 
the construction of a Newton-Okounkov body and the related results on Hilbert functions.

{ Let $X$ be a projective variety of dimension $d$ over $\c$ equipped
with a very ample line bundle $L$}.  
The line bundle $L$ gives rise to a \emph{Kodaira map $\Phi_L$}, 
which is an embedding of $X$ into the projective space $\p(E^*)$ where $E = H^0(X, L)$. 

The {\it ring of sections of $L$} is the graded algebra:
$$R(L) = \bigoplus_{k \geq 0} H^0(X, L^{\otimes k}),$$
{ One knows that $R(L)$ is a finitely generated algebra.} 

{ For the rest of this section $R = \bigoplus_{k \geq 0} R_k$ denotes a graded subalgebra of $R(L)$. 
Such an $R$ is usually referred to as a {\it graded linear series} on $X$.
The homogenous coordinate ring of $X$ embedded in $\p(E^*)$ is an example of $R$. Also 
the ring of sections of an arbitrary line bundle on $X$ can be realized as a graded subalgebra of $R(L)$ for some very ample 
line bundle $L$. For simplicity we will assume that $R_k \neq \{0\}$ for all sufficiently large $k$.}

Fix a valuation $v$ on $\c(X)$ with values in a totally ordered free abelian group $\Gamma$. 
Using $v$ one can associate a semigroup $S(R) \subset \n \times \Gamma$
to $R$. Fix a non-zero element $\tau \in H^0(X, L)$. We use $\tau$ to identify $H^0(X, L)$ with a finite-dimensional
subspace of $\c(X)$ by mapping $\sigma \in H^0(X, L)$ 
to the rational function $\sigma/\tau \in \c(X)$. Similarly,
we can associate the rational function $\sigma/\tau^k$ to an element 
$\sigma \in H^0(X, L^{\otimes k})$. Using these
identifications, we define 
\begin{equation}\label{equ-def-S}
S = S(R) = S(R,v,\tau) = \bigcup_{k > 0} \{ (k, v(\sigma / \tau^k)) \mid \sigma \in
R_k \setminus \{0\}\} \subset \n \times \Gamma.
\end{equation}
{ From the definition} of valuation it follows that $S(R)$ is an additive semigroup.

In order to keep track of the grading on the ring $R =
\bigoplus_{k \geq 0} R_k$, 
it is convenient to extend the valuation $v$ to a valuation $\tilde{v}: R \setminus \{0\} \to \z_{\geq 0} \times \Gamma$ as follows.
We define an ordering on $\z_{\geq 0} \times \Gamma$ by $(m, u) \leq (m', u')$ if
and only if 
\begin{equation}\label{equ-ordering-N-times-Z^n} 
\textup{ either ($m>m'$) \quad or \quad ($m=m'$ and $u \leq u'$) (note the switch!) }
\end{equation}
For $\sigma \in R$, we now define
\begin{equation}\label{equ-def-tilde-v} 
\tilde{v}(\sigma) = (m, v(\sigma_m/\tau^m))
\end{equation}
where $\sigma_m$ is the highest-degree homogeneous component of $\sigma$. 
One verifies the following:

\begin{Prop} \label{prop-tilde-v}
(1) The map $\tilde{v}$ is a valuation on $R$ with values in $\z_{\geq 0} \times \Gamma$ ordered with the above ordering. 
(2) If the valuation $v$ has one-dimensional leaves then $\tilde{v}$ also has one-dimensional leaves. 
(3) The value semigroup of $\tilde{v}$ is exactly the semigroup $S = S(R)$.
\end{Prop}

\begin{Rem} \label{rem-depend-valuation}
The semigroup $S = S(R)$ depends on the 
choice of valuation $v$ on $\c(X)$ and the section $\tau$. The dependence
on $\tau$ is minor; a different choice $\tau'$ would lead to a semigroup
which is shifted by the vector $k v(\tau/\tau')$ at the level $\{k\} \times \Gamma$. 
However, the dependence on the valuation $v$ is much more subtle. 
\end{Rem}

To $R$ one associates the following objects:
\begin{itemize}
\item[-] The cone $C = C(R) \subset \r \times \Gamma_\r$
generated by the semigroup $S = S(R)$,
i.e. the smallest closed convex cone { with apex at the origin} and containing $S$.
\item[-] The subgroup $\Lambda(R) \subset \Gamma$ which is the intersection of the group generated by $S$ with $\{0\} \times \Gamma \cong \Gamma$.
\end{itemize}

\begin{Def}[Newton-Okounkov body] \label{def-Newton-Okounkov-body}
Let $\Delta = \Delta(R) = \Delta(R, v, \tau)$ be the slice of the cone $C$ at $k=1$ projected to
$\Gamma_\r$, via the projection on the second factor $(k,x) \mapsto x$. In other words:
$$\Delta = \overline{\conv(\bigcup_{k>0} \{x/k \mid (k,x) \in S \})}.$$
We call $\Delta$ the {\it Newton-Okounkov body of $R$}. 
(It can be shown that $\Delta$ is bounded and hence a convex body.)
\end{Def}

\begin{Rem}
Even when $R$ is finitely generated, the semigroup $S$ may not be finitely generated and the
cone $C$ may not be polyhedral. Thus, in general, the convex body $\Delta$ is not necessarily a polytope.
Although in most of the examples appearing in this paper, the Newton-Okounkov bodies turn out to be polytopes.
\end{Rem}

The Newton-Okounkov body $\Delta(R)$ encodes information about the 
asymptotic behavior of the Hilbert function of $R$.
Let $H_R(k) := \dim_\c(R_k)$ be the Hilbert function of the graded algebra $R$. { 
Below we assume $v$ to be a valuation on $\c(X)$ with values in $\z^d$ and with one-dimensional leaves.}

\begin{Th}[\cite{Ok-LC, LM, KKh-affine, KKh-NO}] \label{th-asymp-Hilbert-Okounkov-body}
{ Let $q \leq d$ be the (real) dimension of the Newton-Okounkov body $\Delta(R)$. Then $H_R$ grows 
of order $q$ and the growth coefficient $$a_q = \lim_{k \to \infty} \frac{H_R(k)}{k^q},$$ 
of the Hilbert function of $R$ is equal to $\Vol_q(\Delta(R))$. Here $\Vol_q$ is the
$q$-dimensional Lebesgue measure in the affine space spanned by $\Delta(R)$ and normalized with respect to the lattice $\Lambda(R)$. In particular the degree
of the projective embedding of $X$ in $\p(E^*)$ is equal to $d! \Vol_d(\Delta(R))$.}
\end{Th}

\begin{Rem}
{ In the above theorem the assumption that the valuation has one-dimensional leaves is crucial.}
\end{Rem}

\section{Schubert and Bott-Samelson varieties} \label{sec2}
Let $G$ be a connected reductive algebraic group over $\c$. We will follow the notation introduced
after the introduction. { Some references for material in this section are \cite{Humphreys}, \cite{Jantzen}, \cite{Magyar} and \cite[Sections 2.1-2.2]{Brion-Kumar}.}
\subsection{Sequence of Schubert varieties associated to a reduced decomposition}
\label{sec-Schubert}
Let $\S = (\alpha_{i_1}, \ldots, \alpha_{i_N})$ be a reduced decomposition for the longest
element $w_0 \in W$, that is $$w_0 = s_{\alpha_{i_1}} \cdots s_{\alpha_{i_N}},$$
where $N = \ell(w_0)$ and $s_{\alpha_j}$ is the simple reflection corresponding to a simple root $\alpha_{j}$.
For the rest of the paper we fix a reduced decomposition $\S$.

For $k=0, \ldots, N,$ put $$w_k = s_{\alpha_{i_{k+1}}} \cdots s_{\alpha_{i_{N}}},$$ ($w_N$ is the identity $e$). Since $\S$ is a reduced decomposition we have
$\ell(w_k) = N-k$, and
$$e = w_N < w_{N-1} < \cdots  < w_0,$$
in the Bruhat order. Let $X_k = X_{w_k}$ be the Schubert variety corresponding to $w_k \in W$.
We have the sequence:
$$\{o\} = X_N \subset X_{N-1} \subset \cdots \subset X_0 = G/B,$$
with $\dim(X_k) = N-k$.
One verifies that, for $k=0, \ldots, N-1$, the 
Schubert variety $X_k$ is invariant { under a representative of} $s_{\alpha_{i_{k+1}}}$, and 
hence under the opposite root subgroup $U^-_{\alpha_{i_{k+1}}}$. We denote by 
$\V_{\alpha_{i_{k+1}}}$ the generating vector field of the Lie algebra element $F_{\alpha_{i_{k+1}}}$ on $X_k$. 

To the reduced decomposition $\S$ one can also associate a sequence of translated Schubert varieties 
$Y_{\bullet}$ as follows: For each $k$, let $Y_k = w_0w_k^{-1}X_k$ be the Schubert variety of $w_k$ translated by $w_0w_k^{-1}$.
Since $X_k$ is invariant under a representative of $s_{\alpha_{i_{k+1}}}$, we see that $Y_{k+1} \subset Y_k$, for 
$k=0, \ldots N-1$. Thus we have a sequence:
\begin{equation} \label{eqn-flag-of-trans-Schubert-var}
\{w_0o\} = Y_N \subset \cdots \subset Y_0 = G/B.
\end{equation}
As the Schubert varieties are normal and irreducible, (\ref{eqn-flag-of-trans-Schubert-var}) 
gives a sequence of normal subvarieties in $G/B$. { Also since $w_0 w_k^{-1}B w_k o$ is an open set in $Y_k$ containing $w_0o$ and isomorphic to an affine space, 
we see that, unlike the Schubert varieties $X_k$ which are most likely singular at 
the point $o$, the translated Schubert varieties $Y_k$ are smooth at the point $w_0 o$}.

In the next section we construct { a system of parameters corresponding to the sequence of Schubert subvarieties $X_\bullet$ and consider the corresponding highest and lowest term valuations}. For this purpose it is natural to consider the Bott-Samelson 
resolution of singularities $X_\S$ associated to the reduced decomposition $\S$.

\subsection{Bott-Samelson variety associated to a reduced decomposition}
\label{sec-Bott-Samelson}
For each reflection $s_\alpha \in W$ fix a representative $\dot{s}_\alpha \in N(T)$.
For a simple root $\alpha$ let $P_\alpha$ denote the {\it minimal standard parabolic
subgroup of $\alpha$}, that is, the subgroup generated by $B$ and $\dot{s}_\alpha$
(clearly it is independent of the choice of the representative $\dot{s}_\alpha$). One
verifies that $P_\alpha = B \cup (\dot{s}_\alpha B \dot{s}_\alpha)$. 
Also one shows that $P_\alpha / B$ is isomorphic to $\c P^1$ and the map $x \mapsto xB$ gives an embedding of $U^-_{\alpha}$ into $P_\alpha / B \cong \c P^1$ as an open neighborhood of
$eB$ isomorphic to $\c$.

Let $\w = (\alpha_{i_1}, \ldots, \alpha_{i_d})$ be a $d$-tuple of simple roots. The
Bott-Samelson variety $\X_\w$ associated to $\w$ is defined as
$$\X_\w = (P_{\alpha_{i_1}} \times \cdots \times P_{\alpha_{i_d}}) / B^d,$$ where
$B^d$ acts on $P_{\alpha_{i_1}} \times \cdots \times P_{\alpha_{i_d}}$ { from the right by}:
$$(p_1, \ldots, p_d) \cdot (b_1, \ldots, b_d) = (p_1b_1, b_1^{-1}p_2b_2, \ldots, b_{d-1}^{-1}p_db_d).$$
This is a smooth projective variety of dimension $d$, and multiplication defines a morphism
$\pi_\w: \tilde{X}_\w \to G/B$. Let $w = s_{\alpha_{i_1}} \cdots s_{\alpha_{i_d}}$. Suppose $\w$ is a reduced word, i.e.
$\ell(w) = d$. Then it is well-known that:
\begin{Th} \label{th-Bott-Samelson}
The map $\pi_\w: \tilde{X}_\w \to G/B$ is birational onto its image, which is the Schubert variety
$X_w$. Thus the Bott-Samelson varieties resolve singularities of Schubert varieties. Moreover,
$\pi_\w$ is $B$-equivariant, and is an isomorphism over the open Schubert cell $C_w$.
\end{Th}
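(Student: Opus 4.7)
The plan is to verify the three assertions in turn: identify the image as $X_w$, note $B$-equivariance, and exhibit an affine open chart of $\X_\w$ mapped isomorphically onto the Schubert cell $C_w$.

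For the image, I would induct on the length $d$ of $\w$. Since $\pi_\w$ is defined by multiplication followed by projection, the image equals $P_{\alpha_{i_1}} \cdots P_{\alpha_{i_d}}/B$. Writing $\w' = (\alpha_{i_2}, \ldots, \alpha_{i_d})$ and $w' = s_{\alpha_{i_2}} \cdots s_{\alpha_{i_d}}$, the inductive hypothesis gives $\pi_{\w'}(\X_{\w'}) = X_{w'}$, so the image is $P_{\alpha_{i_1}} \cdot X_{w'}$. The standard product rule for minimal parabolics and Schubert varieties yields
\[
P_{\alpha_{i_1}} \cdot X_{w'} \;=\; X_{s_{\alpha_{i_1}} w'} \,\cup\, X_{w'},
\]
and since $\w$ is reduced we have $\ell(s_{\alpha_{i_1}} w') = \ell(w') + 1$, so $s_{\alpha_{i_1}} w' = w$ and the right-hand side collapses to $X_w$. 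The image is irreducible of dimension $d = \ell(w)$. $B$-equivariance is immediate from the construction, since $B$ acts on $\X_\w$ by left multiplication on the first factor $P_{\alpha_{i_1}}$, and $\pi_\w$ intertwines this with the natural $B$-action on $G/B$.

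For the birationality and isomorphism over $C_w$, I would construct an explicit affine open chart. Under the embedding $U^-_\alpha \hookrightarrow P_\alpha/B$ from the preamble, $U^-_\alpha \cdot \dot{s}_\alpha \cdot B \subset P_\alpha$ maps isomorphically onto a dense open subset of $P_\alpha/B$. Taking the product over all $k$ and quotienting by $B^d$ gives an open subset $U \cong \mathbb{A}^d$ of $\X_\w$ with coordinates $(t_1, \ldots, t_d)$ on which
\[
\pi_\w(t_1, \ldots, t_d) \;=\; \exp(t_1 F_{\alpha_{i_1}}) \dot{s}_{\alpha_{i_1}} \exp(t_2 F_{\alpha_{i_2}}) \dot{s}_{\alpha_{i_2}} \cdots \exp(t_d F_{\alpha_{i_d}}) \dot{s}_{\alpha_{i_d}} \cdot B.
\]
Using the relation $\dot{s}_\alpha \exp(t F_\beta) \dot{s}_\alpha^{-1} = \exp(\pm t F_{s_\alpha(\beta)})$ to push each $\dot{s}_{\alpha_{i_k}}$ to the right, and invoking reducedness of $\w$ to ensure that each intermediate root $s_{\alpha_{i_k}} \cdots s_{\alpha_{i_{j-1}}}(\alpha_{i_j})$ stays positive, the expression above rewrites as an element of the form $u \cdot \dot{w} \cdot B$ with $u \in U$ a product of root subgroup elements indexed by the inversion set of $w^{-1}$. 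This lands inside $C_w = BwB/B$, and the standard parametrization of $C_w$ by the corresponding product of positive root subgroups shows $\pi_\w|_U : U \to C_w$ is an isomorphism. Since $U$ is dense in $\X_\w$ and $C_w$ is dense in $X_w$, we conclude $\pi_\w$ is birational onto $X_w$.

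The main obstacle is the bookkeeping in the last step: one must verify carefully that, when commuting the representatives $\dot{s}_{\alpha_{i_k}}$ past the root exponentials, no cancellation occurs and the resulting parametrization of $C_w$ by $(t_1, \ldots, t_d)$ is bijective. This is exactly where reducedness of $\w$ plays the crucial role — without it, some intermediate root would become negative and the image would drop into a proper Schubert subvariety, destroying birationality.
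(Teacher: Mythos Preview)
The paper does not prove this theorem: it is stated as ``well-known'' and no argument is given. Your sketch is a correct and standard proof of this classical fact (image via induction using $P_\alpha \cdot X_{w'} = X_{s_\alpha w'}$ when $\ell(s_\alpha w') > \ell(w')$, $B$-equivariance by construction, and birationality via the explicit affine chart $\prod U^-_{\alpha_{i_k}}\dot{s}_{\alpha_{i_k}}$ mapped onto $C_w$), so there is nothing to compare it against.

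One small remark on your bookkeeping worry: you don't actually need to commute all the $\dot{s}_{\alpha_{i_k}}$ to the right. It is cleaner to observe that the map
\[
(t_1,\ldots,t_d) \;\longmapsto\; \exp(t_1 F_{\alpha_{i_1}})\dot{s}_{\alpha_{i_1}}\cdots \exp(t_d F_{\alpha_{i_d}})\dot{s}_{\alpha_{i_d}}\cdot B
\]
is the composite of the isomorphism $\c^d \to U^-_{\alpha_{i_1}}\dot{s}_{\alpha_{i_1}}\times\cdots\times U^-_{\alpha_{i_d}}\dot{s}_{\alpha_{i_d}}$ with the multiplication map, and then argue by induction on $d$ that multiplication gives an isomorphism onto $C_w$: writing the last factor first, one uses that $C_w \cong U_w \times C_{w'}$ where $U_w$ is a one-dimensional root subgroup (this is the standard Bruhat cell factorization). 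That avoids tracking signs in the conjugation formula and makes the role of reducedness transparent.
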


Now fix a reduced decomposition $\S = (\alpha_{i_1}, \ldots, \alpha_{i_N})$ 
for the longest element $w_0$.
For $0 \leq k \leq N$, let $\underline{w_k} = (\alpha_{i_{k+1}}, \ldots, \alpha_{i_N})$ and 
$w_k = s_{\alpha_{i_{k+1}}} \cdots s_{\alpha_{i_N}}$ (if $k=N$, 
$\w_N$ is empty and $w_N = e$). Define $\X_{k}$ to be the set of 
$(p_1, \ldots, p_N) \mod B^d$ in $\X_\S$ such that $p_j = e$ if $1 \leq j \leq k$.
One sees that $\X_{k}$ is a subvariety of $\X_\S$ isomorphic to the Bott-Samelson variety 
$\X_{\underline{w_k}}$. The parabolic subgroup $P_{\alpha_{k+1}}$ acts on $\tilde{X}_k \cong \tilde{X}_{\underline{w_k}}$ by multiplication
from the left. Consider the map $\tilde{\Phi}_\S: \c^N \to U^-_{\alpha_{i_1}} \times \cdots \times U^-_{\alpha_{i_N}}
\to X_\S$ given by:
$$\tilde{\Phi}_\S: (t_1, \ldots, t_N) \mapsto (\exp(t_1 F_{\alpha_{i_1}}), \ldots, \exp(t_N F_{\alpha_{i_N}})) 
\mod B^N.$$ 

Note that for any $\alpha$ the minimal parabolic $P_{\alpha}$ contains $s_\alpha$ and $U_\alpha$, and hence contains the 
opposite root subgroup $U^-_\alpha$. Thus for any 
$k=0, \ldots, N-1$, we have an action of $U^-_{\alpha_{i_{k+1}}}$
on the Bott-Samelson variety $\X_k$. This induces a generating vector field $\tilde{\V}_{\alpha_{i_{k+1}}}$ on $\X_k$. 
Under the product map $\pi_\S$, $\tilde{\V}_{\alpha_{i_{k+1}}}$ goes to $\V_{\alpha_{i_{k+1}}}$.
{ One has the following:}
\begin{Prop}  \label{prop-Bott-Samelson-coord} ~
\begin{enumerate}
\item $\tilde{\Phi}_\S$ is an { open embedding and its image is a 
neighborhood $\tilde{\U}$ of the identity in $\X_\S$} (which necessarily intersects all the 
subvarieties $\X_{k}$).
\item Let $t_1, \ldots, t_N$ be the coordinates on the open subset $\tilde{\U} \subset \X_\S$ given by the embedding 
$\tilde{\Phi}_\S$. Then in $\tilde{\U}$, each subvariety $\X_{k}$ is given by 
$t_1 = \cdots = t_k = 0$.
\item In these coordinates the vector field $\tilde{\V}_{\alpha_{i_{k+1}}}$ on $\X_k$ is given by $\partial / \partial t_{k+1}$.
\end{enumerate}
\end{Prop}

\begin{proof}
{ Part (1) can be found in \cite[Section 1.4]{Magyar}. Part (2) is immediate from the definition of $\tilde{X}_\S$. 
To prove (3) it suffices to note that the action of $U_{\alpha_{i_{k+1}}}$ on $\tilde{X}_k \cap \tilde{\U}$ is 
given, in the coordinates $(t_{k+1}, \ldots, t_N)$, by: $$\exp(s F_{\alpha_{i_{k+1}}}): (t_{k+1}, \ldots, t_N) \mapsto (s + t_{k+1}, \ldots, t_N).$$
}
\end{proof}

Take the lexicographic order $t_1 > \ldots > t_N$ on $\z^N$. As in Example \ref{ex-Grobner-valuation} the coordinate system $t_1, \ldots, t_N$ 
gives rise to two different valuations on the field of rational functions $\c(\tilde{X}_\S)$: 
(1) the highest term valuation which we denote by $v_\S$, and (2) the lowest term valuation.
The main result of the paper (Theorem \ref{th-main}) states that { the highest term valuation $v_\S$ restricts to the string parametrization $\iota_\S$.}

\begin{Rem} \label{rem-Anderson}
{ From Proposition \ref{prop-Bott-Samelson-coord} we see that, as rational functions on 
$\c(G/B)$, the $t_k$ define a system of parameters (in the sense of Example \ref{ex-Parshin-valuation}) 
for the sequence of Schubert varieties $X_k$ (defined in Section \ref{sec-Schubert}).} 
The valuation on $\c(G/B)$ corresponding to this sequence and the 
system of parameters $t_k$ coincides with the lowest term valuation on $\c(\X_\S)$, via the birational isomorphism $\pi_\S$.
Similarly one defines $\Y_{k}$ to be the set of $(p_1, \ldots, p_N) \mod B^N$ in $\X_\S$ such that $p_j = \dot{s}_{\alpha_{i_j}}$ 
whenver $1 \leq j \leq k$. One verifies that this is independent of the choice of the representative $\dot{s}_{\alpha_{i_j}} \in N(T)$ and 
$\Y_{k}$ is a subvariety of $\X_\S$ isomorphic to $\X_{\underline{w_k}}$. 
Hence we have a sequence of translated Bott-Samelson varieties $$\tilde{Y}_{N} \subset \cdots 
\subset \tilde{Y}_{1} \subset \tilde{Y}_{0}.$$
For each $0 \leq k \leq N-1$ consider the embedding 
$\c^{N-k} \to U^-_{\alpha_{i_{k+1}}} \times \cdots \times U^-_{\alpha_{i_N}}  \to \Y_{k},$ given by 
\begin{equation} \label{equ-coor-Y}
(t_{k+1}, \ldots, t_N) \mapsto (\dot{s}_{\alpha_1}, \ldots, \dot{s}_{\alpha_k}, 
\exp(t_{k+1}F_{\alpha_{i_{k+1}}}), \ldots,  \exp(t_NF_{\alpha_{i_N}})) \mod B^N.
\end{equation}
Similar to Proposition \ref{prop-Bott-Samelson-coord} this embedding gives a system of 
coordinates on an open subset of $\Y_{k}$ which by abuse of notation we denote again by
$t_{k+1}, \ldots, t_N$. It is easy to verify that $t_{k+1}$, regarded as a rational function on $\Y_{k}$, has a pole of order $1$ on 
the hypersurface $\Y_{k+1}$. Hence we see that, as rational functions on $\c(G/B)$ via the birational map $\pi_{\S}$, 
the $u_k = 1/t_k$ define a system of parameters for the sequence of subvarieties $Y_\bullet$ (see \eqref{eqn-flag-of-trans-Schubert-var}).
Based on this fact, in an earlier version of this manuscript the author had claimed that the (highest term) valuation $v_\S$ on $\c(\X_\S)$ 
coincides, via the birational isomorphism $\pi_\S$, 
with the (lowest term) valuation $v_{Y_\bullet}$ on $\c(G/B)$ corresponding to the sequence $Y_\bullet$ of translated Schubert varieties and their local system of parameters $u_k = 1/t_k$. This statement seems not to be true. As pointed out by Dave Anderson, for $k \neq \ell$, the function $u_k$ may still vanish on 
$Y_\ell$ as can be computed in the example of Section \ref{sec-example} namely $G=\SL(3, \c)$ and 
$w_0 = s_\alpha s_\beta s_\alpha$. 
The Newton-Okounkov body associated to this data and $v_{Y_\bullet}$ is computed in \cite{Anderson}. It is a polytope combinatorially equivalent 
to the string polytopes for $\SL(3, \c)$ and $w_0 = s_\alpha s_\beta s_\alpha$. { Surprisingly, it can be verified by direct calculation 
that} there is no upper triangular linear change of coordinates in $\z^3$ that maps Anderson's polytope to the string polytope, which should be 
the case if the two valuations coincided. Note that the 
valuation $v_{Y_\bullet}$ corresponding to the sequence of subvarieties $Y_\bullet$ is defined up to an upper triangular change of coordinates in $\z^N$ corresponding to different choices of parameters for the $Y_k$. 
{ As another example one can take $G = \Sp(4, \c)$. The Newton-Okounkov body $\Delta(R(L_\lambda), v_{Y_\bullet})$ for $G = \Sp(4, \c)$ and the valuation
$v_{Y_\bullet}$ corresponding to the reduced decomposition $\S = (s_2, s_1, s_2, s_1)$ is calculated in \cite[Section 3.4]{Valentina-DDO}. Here
$\alpha_1$ denotes the shorter root and $\alpha_2$ is the longer root.
It is shown that when $\lambda$ is a strictly dominant weight, the Newton-Okounkov body is a polytope with $11$ vertices. 
On the other hand one can show that the string polytope $\Delta_\S(\lambda)$ has $12$ vertices and hence is not even combinatorially equivalent to $\Delta(R(L_\lambda), v_{Y_\bullet})$.}
\end{Rem}
\section{Crystal bases and their string parametrization} \label{sec3}
\subsection{Perfect bases and crystal bases} \label{sec-crystal}
{ In this section we recall some background material about the crystal bases and crystal graphs of representations. Some references are \cite{Kashiwara-crystal-notes},
\cite{HK}, \cite{BK} and \cite{KKKS}.}

Let $V$ be a finite dimensional $G$-module.
Let $\alpha$ be a simple root with the corresponding { root space generators $E_\alpha$
and $F_\alpha$.} Define the functions $\epsilon_\alpha, \varphi_\alpha: V\setminus \{0\}
\to \z,$ by: 
\begin{eqnarray} \label{equ-def-epsilon-phi}
\epsilon_\alpha(v) &=& \max \{a \mid E_\alpha^a \cdot v \neq 0\}, \cr
\varphi_\alpha(v) &=& \max\{a \mid F_\alpha^a \cdot v \neq 0\}.
\end{eqnarray}
If $E_\alpha \cdot v = 0$ (respectively $F_\alpha \cdot v = 0$)
we put $\epsilon_\alpha(v) = 0$ (respectively $\varphi_\alpha(v)=0$).
One knows that there is a vector space basis $\B_V$ for $V$ consisting of weight vectors and with the following properties (\cite[Lemma 5.50]{BK}):
For each $b \in \B_V$ let $E_\alpha \cdot b = \sum_i c_{i}b_i$ with $b_i \in \B_V,c_{i} \neq 0$. Then
\begin{enumerate}
\item[(i)] For every $b_i$, $\epsilon_\alpha(b_i) \leq \epsilon_\alpha(b) - 1$.
\item[(ii)] Provided that $E_\alpha \cdot b \neq 0$, there exists a unique $k$ with $\epsilon_\alpha(b_k) =
\epsilon_\alpha(b) - 1$. For all other $i \neq k, ~~ \epsilon_\alpha(b_i) <
\epsilon_\alpha(b) - 1.$
\end{enumerate}
and similarly, for each $b \in \B_V$ let $F_\alpha \cdot b = \sum_j e_{j}b_j, \, e_{j} \neq 0$. Then
\begin{enumerate}
\item[(iii)] For every $b_j$, $\varphi_\alpha(b_j) \leq \varphi_\alpha(b) - 1$.
\item[(iv)] Provided that $F_\alpha \cdot b \neq 0$, there exists a unique $\ell$ with $\varphi_\alpha(b_\ell) =
\varphi_\alpha(b) - 1$. For all other $j \neq \ell, ~~ \varphi_\alpha(b_j) <
\varphi_\alpha(b) - 1.$

Finally, for $b \in \B_V$, define $\tilde{E}_\alpha(b) = b_k$ and $\tilde{F}_\alpha(b) = b_\ell$.
If $E_\alpha(b) = 0$ (respectively $F_\alpha(b) = 0$) let $\tilde{E}_\alpha(b) = 0$
(respectively $\tilde{F}_\alpha(b) = 0$).
\item[(v)] For $b, b' \in \B_V$, $\tilde{E}_\alpha(b) = b'$ if and only if $\tilde{F}_\alpha(b') = b$.
\end{enumerate}

{
\begin{Def} \label{def-perfect-basis}
A basis $\B_V$ which satisfies the above { is called a {\it perfect basis for the representation $V$}}.
The operators $\tilde{E}_\alpha, \tilde{F}_\alpha: \B_V \to \B_V \cup \{0\},$
are called the {\it Kashiwara operators corresponding to the simple root $\alpha$} (\cite[Definition 5.49]{BK}).
\end{Def}

Consider the directed labeled graph whose vertices are the elements of $\B_V \cup \{0\}$
and its directed edges are labeled by the simple roots in the following way: for $b, b' \in \B_V$
we have $b \xrightarrow{\alpha} b'$ if $\tilde{E}_\alpha(b) = b'$ (equivalently $\tilde{F}_\alpha(b') = b$).
Also for $b \in \B_V$ we write
$b \xrightarrow{\alpha} 0$ if $\tilde{E}_\alpha (b) = 0$, and $0 \xrightarrow{\alpha} b$ if $\tilde{F}_\alpha(b) = 0$.
It is known that for different bases satisfying the above conditions, the graphs produced are isomorphic (\cite[Theorem 5.55]{BK}).
This graph is called the {\it crystal graph of the representation $V$}.}
 
For each $\lambda \in \Lambda^+$, consider 
the basis $\B_\lambda$ of $V_\lambda$ consisting of the nonzero $bv_\lambda$, where $b$ lies in the specialization
at $q = 1$ of the Lusztig canonical basis (Kashiwara lower global basis) for the quantum enveloping algebra. One knows that the dual basis $\B_\lambda^*$ is a perfect basis
for the dual representation $V^*_\lambda$ (\cite[Lemma 5.50]{BK} and \cite[Proposition 4.3 and Proposition 7.3]{KKKS}). Throughout the paper we call this basis the {\it dual crystal basis}.
\footnote{As pointed out before, in the literature what we call a crystal basis is usually called a canonical basis (see the footnotes in page 2).} 
We will use some extra properties of this basis (than just being a perfect basis) namely compatibility with the Demazure modules discussed in Section \ref{sec-Demazure}.

\subsection{String parametrization} \label{sec-string-para}
In this section we define the string parametrization for the elements of a crystal basis of a representation.
In \cite{Littelmann} and \cite{BZ-CB} the authors construct a remarkable parametrization, called
the {\it string parametrization}, for the
elements of a crystal basis by the integral points in certain polytopes. The construction depends on the
choice of a reduced decomposition $\S$ for the longest element $w_0 \in W$.
{In this paper we realize the { dual irreducible representations $V^*_\lambda$ as representations in spaces of functions.} Hence
we will discuss the string parametrization for the dual crystal bases.

Fix a reduced decomposition $\S = (\alpha_{i_1}, \ldots, \alpha_{i_N})$,
$w_0 = s_{\alpha_{i_1}} \cdots s_{\alpha_{i_N}}$.

\begin{Def}[String parametrization] \label{def-string-para-dual-basis}
{ Define the map $\iota_\S: \B^*_\lambda \to \z_{\geq 0}^N$ by
$\iota_\S(b^*) = (a_1, \ldots, a_N)$, where the $a_i$ are defined inductively by:}
\begin{eqnarray*}
a_1 &=& \max \{a \mid \tilde{F}_{\alpha_{i_1}}^a (b^*) \neq 0\}, \cr
a_2 &=& \max \{a \mid \tilde{F}_{\alpha_{i_2}}^a \tilde{F}_{\alpha_{i_1}}^{a_1} (b^*) \neq 0\}, \cr
a_3 &=& \max \{a \mid \tilde{F}_{\alpha_{i_3}}^a \tilde{F}_{\alpha_{i_2}}^{a_2}
\tilde{F}_{\alpha_{i_1}}^{a_1} (b^*) \neq 0\}, \quad \textup{etc.} \cr
\end{eqnarray*}
We call the map $\iota_\S$ the {\it string parametrization of $\B_\lambda^*$ corresponding to the reduced decomposition $\S$}.
\end{Def}

\begin{Rem}
{ In the above definition we can choose any perfect basis for $V_\lambda^*$ in place of the dual crystal basis $\B_\lambda^*$.
As the crystal graph is independent of the { choice of a perfect basis}, the image of the string parametrization is also independent of this choice.}
\end{Rem}

The following remarkable result due to Littelmann describes the image of the
string parametrization for finite dimensional irreducible $G$-modules.
{ For $\lambda \in \Lambda^+$ let}
$S_\lambda$ denote the image of the dual basis $\B^*_\lambda$ under the string parametrization $\iota_\S$.
\begin{Th} \cite[Proposition 1.5]{Littelmann} \label{th-Littelmann} 
\begin{enumerate}
\item For any dominant weight $\lambda$, $\dim(V^*_\lambda) = \#S_\lambda$, i.e. the
string parametrization is one-to-one.
\item Consider
$$\mathcal{S}_\S = \bigcup_{\lambda \in \Lambda^+} \{ (\lambda, a) \mid a \in S_\lambda\} \subset \Lambda^+ \times \z_{\geq 0} ^N.$$
Then $\mathcal{S}_\S$ is the intersection of a rational
{ closed} convex polyhedral cone $\mathcal{C}_\S$ in $\Lambda_\r \times \r^N$ with the
lattice $\Lambda \times \z^N$. (In particular, $\mathcal{C}_\S$ intersects { the subspace} $\{0\} \times \r^N$
only at the origin.)

\end{enumerate}
\end{Th}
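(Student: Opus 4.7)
The plan is to establish the two parts separately, since they require quite different techniques.

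For injectivity in part (1), I would argue that the string $(a_1, \ldots, a_N) = \iota_\S(b^*)$, together with the reduced decomposition $\S$, uniquely determines $b^*$. Starting from $b_0^* = b^*$, iteratively define $b_k^* = \tilde{F}_{\alpha_{i_k}}^{a_k}(b_{k-1}^*)$. By the maximality of each $a_k$ and the reversibility axiom (v) of crystal bases, one can recover $b^*$ from $b_N^*$ via $b^* = \tilde{E}_{\alpha_{i_1}}^{a_1} \cdots \tilde{E}_{\alpha_{i_N}}^{a_N}(b_N^*)$. The nontrivial input is that $b_N^*$ must equal the unique lowest-weight element $v_\lambda^* \in \B^*_\lambda$. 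This holds because running maximal $\tilde{F}$'s along a reduced word for $w_0$ exhausts every simple-root direction in the crystal, and any element of $\B^*_\lambda$ killed by every $\tilde{F}_\alpha$ is unique by the connectedness of the crystal graph of $V_\lambda$. Injectivity then follows since the recovery procedure is deterministic.

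For part (2), the polyhedrality statement, my approach would be to combine Littelmann's path model with the Lusztig parametrization of the totally positive part of $U^-$. Encode each $b^* \in \B^*_\lambda$ as a Lakshmibai--Seshadri path $\pi$ in $\Lambda_\r$ with appropriate endpoint, and compute each $a_k$ using the piecewise-linear root-operator formulas on paths. On each chamber of piecewise linearity, $a_k$ is a linear functional in $\lambda$ and in the previous coordinates $(a_1, \ldots, a_{k-1})$; patching these together shows that the graph $\mathcal{S}_\S$ is cut out from the lattice $\Lambda \times \z^N$ by finitely many rational linear inequalities --- the \emph{string inequalities}, which can alternatively be written down explicitly via Berenstein--Zelevinsky $i$-trail data. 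Finiteness and rationality of these inequalities produce the polyhedral cone $\mathcal{C}_\S$. The cone intersects $\{0\} \times \r^N$ only at the origin because a nonzero string parametrization forces nontrivial weight action, which is incompatible with $\lambda = 0$ (where $V_\lambda$ is trivial).

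The main obstacle is establishing the polyhedral structure uniformly across all $\lambda$: one must control the interaction of Kashiwara operators with the braid relations among reduced words, so that the piecewise-linear description assembles into a \emph{single} finite system of rational inequalities. The cleanest route is to identify the string parametrization with the tropicalization of Lusztig's positive coordinates on $U^-$ attached to $\S$, so that polyhedrality becomes a general tropical-geometric fact and the $i$-trail recipe supplies explicit inequalities. Surjectivity onto $\mathcal{S}_\S$ --- that every integer point of $\mathcal{C}_\S$ actually arises as some $(\lambda, \iota_\S(b^*))$ --- can then be proved by induction along $\S$, building $b^*$ from $v_\lambda^*$ via successive applications of $\tilde{E}$ operators whose admissible exponents are precisely encoded by the defining inequalities of $\mathcal{C}_\S$.
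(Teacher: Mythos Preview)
The paper does not prove this theorem; it is quoted as \cite[Theorem 4.2]{Littelmann} and used as a black box. There is therefore no ``paper's own proof'' to compare your proposal against. The surrounding text only states the result, then immediately defines the string polytope and records some remarks.

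As to your sketch on its own terms: your argument for part (1) is essentially the standard one and is fine. The key point --- that after applying maximal $\tilde{F}$-strings along a reduced word for $w_0$ one lands at the unique lowest-weight vector --- is exactly what makes the reconstruction via $\tilde{E}$'s deterministic. For part (2), your outline is in the right spirit but remains a high-level plan rather than a proof: invoking ``piecewise-linear root-operator formulas on paths,'' ``Berenstein--Zelevinsky $i$-trail data,'' and ``tropicalization of Lusztig's positive coordinates'' names several distinct (and nontrivially equivalent) frameworks without committing to one and carrying out the argument. Littelmann's actual proof in \cite{Littelmann} works entirely within the path model and derives the defining inequalities of $\mathcal{C}_\S$ directly from the combinatorics of adapted strings; the $i$-trail description is a later reformulation due to Berenstein--Zelevinsky \cite{B-Z}. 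If you want a self-contained argument, pick one of these frameworks and work inside it rather than gesturing at their equivalence.
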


\begin{Def}[String polytope]
For any $\lambda$ in the positive Weyl chamber $\Lambda^+_\r$, the {\it string polytope} $\Delta_\S(\lambda) \subset \r^N$
is the slice of the cone $\mathcal{C}_\S$ at $\lambda$, that is,
$$\Delta_\S(\lambda) = \{ a \mid (\lambda, a) \in \mathcal{C}_\S \}.$$
As $\mathcal{C}_\S$ is a rational convex polyhedral cone and intersects $\{0\} \times \r^N$ only at the origin, $\Delta_\S(\lambda)$ is a
rational convex polytope (i.e. with rational vertices).
\end{Def}

\begin{Rem} \label{rem-string-polytop}
{ (1) By Theorem \ref{th-Littelmann}, when $\lambda \in \Lambda^+$}, 
the number of integral points in the string polytope $\Delta_\S(\lambda)$ is equal to $\dim(V_\lambda)$.
(2) For any $k>0$, $\Delta_\S(k\lambda) = k\Delta_\S(\lambda)$.
\end{Rem}

\begin{Rem} \label{rem-string-G-C}
In \cite{Littelmann} it is shown that when $G = \SL(n, \c)$ and for a natural choice of
a reduced decomposition $\S$, after a fixed linear change of parameters,
the string polytope $\Delta_\S(\lambda)$ coincides with the Gelfand-Cetlin polytope of $\lambda$.
Similar statements hold for the Gelfand-Cetlin polytopes of the classical groups $\Sp(2n, \c)$ and $\SO(n, \c)$
(as introduced in \cite{BZ-PS}).
\end{Rem}

The following states that in defining the string parameters we can use
$F_\alpha$ instead of $\tilde{F}_\alpha$.
It is a straightforward corollary of the defining properties of a { perfect basis.}
\begin{Prop} \label{prop-string-para-F-alpha}
{ Let $b^* \in \B^*_\lambda$.}
For any simple root $\alpha$ we have $\tilde{F}_\alpha (b^*) = 0$
if and only if $F_\alpha \cdot b^* = 0$, { and more generally:}
$$\max \{ a \mid F_\alpha^a \cdot b^* \neq 0 \} = \max\{ a \mid \tilde{F}_\alpha^a (b^*) \neq 0\}.$$ 
Also if $k = \varphi_\alpha(b^*) = \max\{a \mid \tilde{F}_\alpha^a (b^*) \neq 0\}$ then $F_\alpha^k \cdot b^* = c \tilde{F}_\alpha^k (b^*)$ for some nonzero scalar $c$.
It follows that if $\iota_\S(b^*) = (a_1, \ldots, a_N)$ are the string parameters of $b^*$, then:
\begin{eqnarray*}
a_1 &=& \max \{a \mid {F}_{\alpha_{i_1}}^a \cdot b^* \neq 0\}, \cr
a_2 &=& \max \{a \mid {F}_{\alpha_{i_2}}^a {F}_{\alpha_{i_1}}^{a_1} \cdot b^* \neq 0\}, \cr
a_3 &=& \max \{a \mid {F}_{\alpha_{i_3}}^a {F}_{\alpha_{i_2}}^{a_2}
{F}_{\alpha_{i_1}}^{a_1} \cdot b^* \neq 0\}, \quad \textup{etc.} \cr
\end{eqnarray*}
\end{Prop}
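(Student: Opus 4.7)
The approach is to bootstrap from the following single-step strengthening, proved by induction on $a$: for every $b^* \in \B_V^*$, every simple root $\alpha$, and every $a \geq 0$ with $\tilde{F}_\alpha^a b^* \neq 0$,
\begin{equation*}
F_\alpha^a b^* \;=\; c_a\, \tilde{F}_\alpha^a b^* \;+\; \sum_j d_j\, b_j^*,
\end{equation*}
with $c_a \neq 0$ and every $b_j^* \in \B_V^*$ on the right satisfying $\varphi_\alpha(b_j^*) < \varphi_\alpha(\tilde{F}_\alpha^a b^*)$. The base case $a=0$ is trivial. For the induction step I would apply $F_\alpha$ to both sides and invoke the defining properties (iii) and (iv) for the dual crystal basis $\B_V^*$ of $V^*$ (as noted at the end of Section \ref{sec-crystal}): property (iv) extracts $\tilde{F}_\alpha^{a+1} b^*$ from $F_\alpha \tilde{F}_\alpha^a b^*$ as the unique top-$\varphi_\alpha$ summand with nonzero coefficient, while property (iii) forces every other basis summand arising from $F_\alpha b_j^*$ to lie strictly below $\varphi_\alpha(\tilde{F}_\alpha^{a+1} b^*)$. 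Consequently no cross-cancellation can affect the coefficient of $\tilde{F}_\alpha^{a+1} b^*$, and the inductive form is preserved.

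The first assertion of the proposition is then immediate. Taking $a = \varphi_\alpha(b^*)$ in the displayed identity, the ``lower'' basis terms would require $\varphi_\alpha < 0$ and must therefore be absent, so the expansion collapses to $F_\alpha^{\varphi_\alpha(b^*)} b^* = c\, \tilde{F}_\alpha^{\varphi_\alpha(b^*)} b^*$ with $c \neq 0$. Hence $F_\alpha^a b^*$ and $\tilde{F}_\alpha^a b^*$ vanish for exactly the same values of $a$, which yields both $\tilde{F}_\alpha b^* = 0 \iff F_\alpha b^* = 0$ and the equality of the two maxima.

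For the formulas giving the string parameters, I would iterate. Set $b_0^* = b^*$ and, inductively, $b_k^* = \tilde{F}_{\alpha_{i_k}}^{a_k} b_{k-1}^*$ with $a_k = \varphi_{\alpha_{i_k}}(b_{k-1}^*)$ as in Definition \ref{def-string-para-dual-basis}. Applying the collapse from the previous paragraph at each stage gives $F_{\alpha_{i_k}}^{a_k} b_{k-1}^* = c_k\, b_k^*$ with $c_k \neq 0$, so by composition
\begin{equation*}
F_{\alpha_{i_k}}^{a_k} \cdots F_{\alpha_{i_1}}^{a_1} b^* \;=\; (c_1 \cdots c_k)\, b_k^*.
\end{equation*}
Therefore $\max\{a \mid F_{\alpha_{i_{k+1}}}^a F_{\alpha_{i_k}}^{a_k} \cdots F_{\alpha_{i_1}}^{a_1} b^* \neq 0\} = \varphi_{\alpha_{i_{k+1}}}(b_k^*) = a_{k+1}$, which matches the displayed formulas exactly.

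No step here is a genuine obstacle; the only place requiring care is the induction step of the single-step claim, where one must use that $\B_V^*$ is itself a crystal basis and correctly separate the unique top-$\varphi_\alpha$ component from the strictly lower ones so that cross-cancellation cannot annihilate the coefficient $c_{a+1}$. Once that bookkeeping is in place, everything else is formal.
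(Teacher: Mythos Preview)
Your proposal is correct. The paper itself offers no proof beyond the remark that the proposition ``is a straight forward corollary of the defining properties of a crystal basis''; your argument is a faithful fleshing-out of that remark, using exactly properties (iii) and (iv) as intended. The crucial observation you isolate --- that at the maximal step $a=\varphi_\alpha(b^*)$ the ``lower'' terms in the expansion $F_\alpha^a b^* = c_a\,\tilde F_\alpha^a b^* + (\text{lower }\varphi_\alpha)$ are forced to vanish, so that $F_\alpha^{\varphi_\alpha(b^*)} b^*$ is a nonzero scalar multiple of the single crystal basis element $\tilde F_\alpha^{\varphi_\alpha(b^*)} b^*$ --- is exactly what makes the iteration over the reduced word clean, and is presumably what the author had in mind.
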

\begin{proof}
{ The first assertion, i.e. $\tilde{F}_\alpha (b^*) = 0$ if and only if $F_\alpha \cdot b^* = 0$, is immediate from the definition of a perfect basis.
We prove the next assertion by induction on $k$. By the definition of the Kashiwara operator $\tilde{F}_\alpha$ we have: 
$$F_\alpha \cdot b^* = c_0 \tilde{F}_\alpha (b^*) + \sum_{i} c_i b^*_i,$$ where $c_0$ and the $c_i$ are nonzero scalars and 
$\varphi_\alpha(b^*_i) < k-1$ for all $i$. Thus 
$F_\alpha^{k-1}(F_\alpha \cdot b^*) = F_\alpha^{k-1}(c_0 \tilde{F}_\alpha (b^*))$.
Now by induction hypothesis we get:
$$F_\alpha^{k-1}(c_0\tilde{F}_\alpha (b^*)) = c \tilde{F}_\alpha^{k-1}(\tilde{F}_\alpha (b^*)) = c \tilde{F}_\alpha^k (b^*),$$ for some nonzero scalar $c$. This proves the claim.
}
\end{proof}

One can then extend the definition of the string parametrization to all the vectors in the $G$-module $V_\lambda^*$.
\begin{Def}[String parameters for an arbitrary vector] \label{def-string-para-all-vectors}
{ Let $\sigma \in V_\lambda^* \setminus \{0\}$}. Define the string parameters
$\iota_\S(\sigma) = (a_1, \ldots, a_N)$ as follows:
\begin{eqnarray*}
a_1 &=& \max \{a \mid {F}_{\alpha_{i_1}}^a \cdot \sigma \neq 0\}, \cr
a_2 &=& \max \{a \mid {F}_{\alpha_{i_2}}^a {F}_{\alpha_{i_1}}^{a_1} \cdot \sigma \neq 0\}, \cr
a_3 &=& \max \{a \mid {F}_{\alpha_{i_3}}^a {F}_{\alpha_{i_2}}^{a_2}
{F}_{\alpha_{i_1}}^{a_1} \cdot \sigma \neq 0\}, \quad \textup{etc.} \cr
\end{eqnarray*}
\end{Def}

Finally, consider the {\it generalized Pl\"{u}cker map} $\Phi_\lambda: G/P \to \p(V_\lambda)$, given by 
$gB \mapsto [g \cdot v_\lambda]$, where $v_\lambda$ is a highest weight vector in $V_\lambda$, $[v]$
denotes the point in the projective space represented by a vector $v$, and $P$ is the parabolic subgroup which is 
the $G$-stabilizer of $[v_\lambda]$ in $\p(V_\lambda)$.
If $\lambda$ is a regular dominant weight (i.e. lies in the interior of the positive
Weyl chamber) then $P = B$.
From Remark \ref{rem-string-polytop} we obtain:
\begin{Cor} \label{cor-deg-G/P}
The degree of the image of $G/P$ in the projective space $\p(V_\lambda)$ is equal to $m! \Vol_m(\Delta_\S(\lambda))$ 
where $m = \dim(G/P)$ and $\Vol_m$ is the Lebesgue measure in the real span of the $m$-dimensional polytope 
$\Delta_\S(\lambda) \subset \r^N$ normalized with respect to the lattice $\z^N$. In particular, when $\lambda$ is a regular dominant 
weight the degree of $G/B$ is equal to $N! \Vol_N(\Delta_\S(\lambda))$ where 
$\Vol_N$ is the standard $N$-dimensional Lebesgue measure in $\r^N$.
\end{Cor}
\begin{proof}
The homogeneous coordinate ring of the image of $G/P$ in $\p(V_\lambda)$ is isomorphic to
the graded ring $\bigoplus_{k \geq 0} V^*_{k\lambda}$. By Remark \ref{rem-string-polytop},
the dimension of the $k$-th graded piece is given by $\#(k\Delta_\S(\lambda) \cap \z^n)$. The corollary
follows from the Hilbert theorem on the degree of a projective subvariety of the projective space.
\end{proof}

The main result (Theorem \ref{th-main}) shows that Corollary \ref{cor-deg-G/P} is in fact 
a special case of a much more general theorem (Theorem \ref{th-asymp-Hilbert-Okounkov-body}) 
about Newton-Okounkov bodies.

\subsection{Demazure modules and string parametrization} \label{sec-Demazure}
{ In this section we cover some background material about Demazure 
modules and compatibility of crystal bases with Demazure modules.}
{ For general references on material in this section see \cite{Kumar} and \cite{Kashiwara-Demazure}.}

The purpose of considering Demazure modules in this section is to give a slightly different definition of the string parametrization
(Theorem \ref{th-alt-def-string-para}). This will be an important step in the proof of our main theorem (Theorem \ref{th-main}). 

Let $\lambda$ be a dominant weight.
For any $w \in W$ one knows that the weight space of the weight $w\lambda$ in $V_\lambda$ is
$1$-dimensional. An eigenvector $v_{w\lambda}$ of weight $w\lambda$ is
called an {\it extremal weight vector}. The $B$-module generated by $v_{w\lambda}$
is called the {\it Demazure module} corresponding to $w$ and $\lambda$ and denoted by $V_\lambda(w)$.
Note that the Demazure module $V_{\lambda}(w_0)$ is just the whole space $V_\lambda$.
Demazure modules play an important role in representation theory of $V_\lambda$ as well as in 
Schubert calculus.

For $w \in W$, the inclusion $V_\lambda(w) \subset V_\lambda$ induces a projection
$\pi_{w}: V_\lambda^* \to V_\lambda(w)^*$. It is known  
that for any $w \in W$, the restriction map $H^0(G/B, L_\lambda) \to H^0(X_w, {L_\lambda}_{|X_w})$ is
surjective and one can identify $H^0(X_w, {L_\lambda}_{|X_w})$ with the dual
Demazure module $V_\lambda(w)^*$. Under this identification the projection
$\pi_{w}$ corresponds to the restriction map $H^0(G/B, L_\lambda) \to H^0(X_{w}, {L_\lambda}_{|X_w})$.
It is also well-known (\cite{Kashiwara-Demazure}) that for any $w \in W$ there is a subset
$\B_\lambda(w)$ of the crystal basis $\B_\lambda$ which is a basis for $V_\lambda(w)$.
Moreover, $\B_\lambda(w) \cup \{0\}$ is invariant under $\tilde{E}_\alpha$, for any simple root $\alpha$.
As before let $\B_\lambda^*$ be the dual crystal basis for $V_\lambda^*$ and for each $b \in \B_\lambda$ let $b^* \in \B_\lambda^*$
be its corresponding dual basis element. { One then knows the following 
(see \cite[Section 1.8]{Caldero}, \cite[Section 12.4]{Kashiwara-crystal-notes}, \cite[Sections 5.3-5.4]{Littelmann}):}
\begin{Prop} \label{prop-main-prop-Demazure-crystal-basis} ~
\begin{enumerate}
\item For any $w \in W$, the set $\B^*_\lambda \setminus (\B_\lambda(w))^*$ is a basis for
$\ker(\pi_w)$.
\item The image (under $\pi_w$) of $(\B_\lambda(w))^*$ is a basis for $V_\lambda(w)^*$.
\item For any simple root $\alpha$, the set $(\B_\lambda(w))^* \cup \{0\}$ is invariant under
$\tilde{F}_\alpha$.
\end{enumerate}
\end{Prop}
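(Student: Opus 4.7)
The plan is to treat parts (1) and (2) together as elementary duality and deduce (3) directly from the edge-reversal description of the dual crystal graph recalled in Section \ref{sec-crystal}.

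For parts (1) and (2), I would start by writing $V_\lambda = V_\lambda(w) \oplus W$, where $W$ is the span of $\B_\lambda \setminus \B_\lambda(w)$; this decomposition exists because $\B_\lambda(w)$ is the portion of the basis $\B_\lambda$ that happens to span the submodule $V_\lambda(w)$. The map $\pi_w$ is restriction of linear functionals from $V_\lambda$ to $V_\lambda(w)$, so it is surjective with kernel equal to the annihilator of $V_\lambda(w)$. For $b \in \B_\lambda \setminus \B_\lambda(w)$, the functional $b^*$ vanishes on every element of $\B_\lambda(w)$, hence on $V_\lambda(w)$, and so lies in $\ker(\pi_w)$; these functionals are linearly independent as a subset of the basis $\B_\lambda^*$, and their cardinality equals $\dim V_\lambda - \dim V_\lambda(w) = \dim \ker(\pi_w)$, so they form a basis of the kernel. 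For $b \in \B_\lambda(w)$, the image $\pi_w(b^*)$ is by construction the dual basis element of $b$ inside $V_\lambda(w)^*$, so $\{\pi_w(b^*) : b \in \B_\lambda(w)\}$ is a basis of $V_\lambda(w)^*$.

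For part (3), I would translate the edge-reversal rule $b \xrightarrow{\alpha} b' \iff b'^* \xrightarrow{\alpha} b^*$ into the operator identity: $\tilde{E}_\alpha(b) = b'$ in $V_\lambda$ if and only if $\tilde{F}_\alpha(b^*) = b'^*$ in $V_\lambda^*$. Given any $b \in \B_\lambda(w)$, either $\tilde{E}_\alpha(b) = 0$, in which case $\tilde{F}_\alpha(b^*) = 0$, or $\tilde{E}_\alpha(b) \in \B_\lambda(w)$ by the hypothesized $\tilde{E}_\alpha$-invariance of $\B_\lambda(w) \cup \{0\}$, in which case $\tilde{F}_\alpha(b^*) = (\tilde{E}_\alpha(b))^* \in (\B_\lambda(w))^*$. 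Either way, $\tilde{F}_\alpha$ sends $(\B_\lambda(w))^* \cup \{0\}$ to itself.

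There is no real obstacle beyond bookkeeping. The only point that requires care is the duality convention in (3): on the dual basis, $\tilde{F}_\alpha$ plays the role that $\tilde{E}_\alpha$ plays on the original, so the given $\tilde{E}_\alpha$-stability of $\B_\lambda(w)$ is exactly the input needed to produce $\tilde{F}_\alpha$-stability of $(\B_\lambda(w))^*$. All three assertions are thus formal consequences of the compatibility of the Demazure submodule with the crystal basis together with the duality conventions already recorded.
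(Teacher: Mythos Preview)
Your argument is correct. Note, however, that the paper does not actually prove this proposition: it is stated with the phrase ``One then knows'' immediately after recording (from \cite{Kashiwara-Demazure}) that $\B_\lambda(w)$ is a subset of $\B_\lambda$ spanning $V_\lambda(w)$ and that $\B_\lambda(w) \cup \{0\}$ is $\tilde{E}_\alpha$-stable. Your write-up supplies precisely the elementary verification the paper leaves implicit: parts (1) and (2) are the standard linear-algebra dualization of a direct-sum decomposition adapted to a basis, and part (3) is exactly the translation of $\tilde{E}_\alpha$-stability of $\B_\lambda(w)$ into $\tilde{F}_\alpha$-stability of $(\B_\lambda(w))^*$ via the edge-reversal description of the dual crystal recorded at the end of Section~\ref{sec-crystal}. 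There is nothing to compare against, and no gap in what you wrote.
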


Finally we have the following theorem about the string parametrization and Demazure modules.
As before fix a reduced decomposition $\S = (\alpha_{i_1}, \ldots, \alpha_{i_N})$ and put 
$w_k = s_{\alpha_{i_{k+1}}} \cdots s_{\alpha_{i_{N}}}$ for $k  = 0, \ldots, N$.
Take $\sigma \in V^*_\lambda \setminus \{0\}$ and let $\iota_\S(\sigma) = (a_1, \ldots, a_N)$ be its string parameters.

Similar to the definition of the string parameters,
define the $N$-tuple of integers $(e_1, \ldots, e_N)$ as follows.
Let $$ e_1 = a_1  = \max\{e \mid {F}_{\alpha_{i_1}}^e \cdot \sigma \neq 0 \}.$$
Put $\sigma_1 = {F}_{\alpha_{i_1}}^{e_1} \cdot \sigma$ and define
$$e_2 = \max\{e \mid \pi_{w_1}({F}_{\alpha_{i_2}}^e \cdot \sigma_1) \neq 0\}.$$
Put $\sigma_2 = {F}_{\alpha_{i_2}}^{e_2} \cdot \sigma_1$ and define
$$e_3 = \max\{e \mid \pi_{w_2}({F}_{\alpha_{i_3}}^e \cdot \sigma_2) \neq 0\}, \quad \textup{etc.}$$

\begin{Th}[Alternative definition of string parametrization] \label{th-alt-def-string-para}
For any $\sigma \in V^*_\lambda \setminus \{0\}$ we have $(a_1, \ldots, a_N) = (e_1, \ldots, e_N)$.
\end{Th}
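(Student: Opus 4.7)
The strategy is induction on $k$. The base case $k=1$ is immediate since $e_1 = a_1$ by definition (no projection is involved). For the inductive step, assume $a_j = e_j$ for all $j<k$, so that the recursively-defined vectors $\sigma_{k-1}$ in the two sequences coincide. The inequality $e_k \leq a_k$ is trivial, since $\pi_{w_{k-1}}(F^e_{\alpha_{i_k}}\sigma_{k-1}) \neq 0$ forces $F^e_{\alpha_{i_k}}\sigma_{k-1} \neq 0$. The main content is the reverse inequality $e_k \geq a_k$, i.e.\ showing $\pi_{w_{k-1}}(F^{a_k}_{\alpha_{i_k}}\sigma_{k-1}) \neq 0$.

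To establish this I expand $\sigma = \sum_{b^* \in \B^*_\lambda} c_{b^*} b^*$ in the dual crystal basis and analyze how each $F_{\alpha_{i_j}}^{a_j}$ acts on the crystal basis elements. The two crystal-basis facts I exploit are: (a) if $m = \varphi_\alpha(b^*)$, then axioms (iii)--(iv) give $F_\alpha^m b^* = c_m\, \tilde{F}_\alpha^m b^* + (\text{terms of strictly smaller } \varphi_\alpha)$, and the lower-order terms must vanish when $m$ is maximal, so $F_\alpha^{\varphi_\alpha(b^*)} b^*$ is a nonzero scalar multiple of the single crystal basis element $\tilde{F}_\alpha^{\varphi_\alpha(b^*)} b^*$; (b) $\tilde{F}_\alpha^m$ is injective on $\{b^* \mid \varphi_\alpha(b^*) = m\}$. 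Together with Proposition \ref{prop-string-para-F-alpha}, an inner induction on the successive applications $F_{\alpha_{i_1}}^{a_1}, \ldots, F_{\alpha_{i_k}}^{a_k}$ yields a cancellation-free expansion
\[
\sigma_k \;=\; F_{\alpha_{i_k}}^{a_k}\cdots F_{\alpha_{i_1}}^{a_1}\sigma \;=\; \sum d_{b^*}\, \tilde{b}^*_k, \qquad \tilde{b}^*_k \;:=\; \tilde{F}_{\alpha_{i_k}}^{a_k}\cdots \tilde{F}_{\alpha_{i_1}}^{a_1} b^*,
\]
where the sum is over those $b^*$ with $c_{b^*} \neq 0$ whose first $k$ string parameters (with respect to $\S$) coincide with $(a_1,\ldots,a_k)$, the $\tilde{b}^*_k$ are pairwise distinct elements of $\B^*_\lambda$, and all $d_{b^*} \neq 0$. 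A byproduct is that $a_k$ is exactly the maximum of the $k$-th string parameters among the contributing $b^*$.

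To conclude, each $\tilde{b}^*_k$ has string parameters of the form $(0,\ldots,0,b_{k+1}(b^*),\ldots,b_N(b^*))$ with $k$ leading zeros. By the Demazure crystal recurrence of Kashiwara (consistent with Proposition \ref{prop-main-prop-Demazure-crystal-basis}), such an element corresponds via the string parametrization to an element of the Demazure crystal $\B_\lambda(w_k) \subseteq \B_\lambda(w_{k-1})$ (it can be written as $\tilde{F}_{\alpha_{i_{k+1}}}^{c_{k+1}}\cdots \tilde{F}_{\alpha_{i_N}}^{c_N}\, b_\lambda$ for suitable $c_j \geq 0$). Hence each $\tilde{b}^*_k \in (\B_\lambda(w_{k-1}))^*$, so $\sigma_k$ is a nonzero element of $\textup{span}\,(\B_\lambda(w_{k-1}))^*$, which by Proposition \ref{prop-main-prop-Demazure-crystal-basis}(1) intersects $\ker(\pi_{w_{k-1}})$ trivially. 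This gives $\pi_{w_{k-1}}(\sigma_k) \neq 0$, hence $e_k \geq a_k$, closing the induction. The principal obstacle is the no-cancellation statement for the crystal-basis expansion of $\sigma_k$; once this is secured, the identification of the relevant crystal basis elements with the Demazure crystal is the routine part.
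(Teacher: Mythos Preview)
Your argument is correct and runs parallel to the paper's, with one organizational difference worth noting. The paper tracks a \emph{single} leading dual crystal basis element $b^*$ (the unique one with $\iota_\S(b^*)=\iota_\S(\sigma)$), shows that $\tilde F_{\alpha_{i_k}}^{a_k}\cdots\tilde F_{\alpha_{i_1}}^{a_1}b^*$ appears with nonzero coefficient in $\sigma_k$, and then uses Kashiwara's Lemma~\ref{lem-Kashiwara} together with the $\tilde F_\alpha$-stability of $(\B_\lambda(w_{k-1}))^*$ (Proposition~\ref{prop-main-prop-Demazure-crystal-basis}(3)) to force $\pi_{w_{k-1}}(\sigma_k)\neq 0$. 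You instead carry the full crystal expansion of $\sigma_k$ and obtain the stronger statement that \emph{every} surviving term lies in $(\B_\lambda(w_{k-1}))^*$; this costs a little more bookkeeping but gives a cleaner picture of what $\sigma_k$ looks like. Either way the crux is the same Demazure-crystal input.

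One point to tighten: your claim that $\tilde b^*_k$ has its first $k$ string parameters equal to zero is not immediate from the axioms (the first string parameter of $\tilde b^*_k$ is $\varphi_{\alpha_{i_1}}(\tilde b^*_k)$, and applying $\tilde F_{\alpha_{i_2}},\ldots,\tilde F_{\alpha_{i_k}}$ gives no direct control over this). What you actually need, and what you correctly attribute to Kashiwara, is just $\tilde b^*_k\in(\B_\lambda(w_k))^*$. This follows by a short induction: if $\tilde b^*_{j-1}\in(\B_\lambda(w_{j-1}))^*$, then $\tilde F_{\alpha_{i_j}}\tilde b^*_j=0$ and $\tilde E_{\alpha_{i_j}}^{a_j}\tilde b^*_j=\tilde b^*_{j-1}\in(\B_\lambda(w_{j-1}))^*$, so Lemma~\ref{lem-Kashiwara} (with $w=w_{j-1}=s_{\alpha_{i_j}}w_j$) gives $\tilde b^*_j\in(\B_\lambda(w_j))^*$. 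This is exactly what the paper does; I would recommend stating it this way rather than routing through the leading-zeros assertion.
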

We need the following lemma which can be found in \cite[Lemma 3.3.3]{Kashiwara-Demazure} { (note that we have stated the dual of the statement in \cite{Kashiwara-Demazure}).}
\begin{Lem} \label{lem-Kashiwara}
{ Let $w = s_\alpha w'$ for a simple reflection $s_\alpha$ and $\ell(w) = \ell(w') + 1$.
Let $b^* \in \B^*_\lambda$. Let us assume that for some $k \geq 0$, $\tilde{E}_\alpha^k (b^*) \in (\B_\lambda(w))^*$ and
$\tilde{F}_\alpha (b^*) = 0$. Then $b^* \in (\B_\lambda(w'))^*$.}
\end{Lem}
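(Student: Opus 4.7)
The plan is to dualize the statement so that it becomes a purely internal assertion about the crystal $\B_\lambda$, and then to combine two facts about Demazure subcrystals: the recursive description of $\B_\lambda(s_\alpha w')$ (due to Kashiwara) and the $\tilde{E}_\alpha$-invariance of $\B_\lambda(w') \cup \{0\}$.

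First I would translate the hypotheses into the primal crystal using the duality relations $\tilde{E}_\alpha(b^*) = (\tilde{F}_\alpha(b))^*$ and $\tilde{F}_\alpha(b^*) = (\tilde{E}_\alpha(b))^*$, which follow from the edge-reversal description of the dual crystal graph in Section \ref{sec-crystal}. The assumption $\tilde{F}_\alpha \cdot b^* = 0$ then becomes $\tilde{E}_\alpha(b) = 0$, so $b$ is the $\alpha$-maximal element of its $\alpha$-string; the assumption $\tilde{E}_\alpha^k \cdot b^* \in (\B_\lambda(w))^*$ becomes $\tilde{F}_\alpha^k(b) \in \B_\lambda(w)$; and the desired conclusion reads $b \in \B_\lambda(w')$.

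Next I would invoke the standard recursive description of the Demazure crystal from \cite{Kashiwara-Demazure}: since $w = s_\alpha w'$ with $\ell(w) = \ell(w') + 1$,
$$\B_\lambda(w) \cup \{0\} = \{\, \tilde{F}_\alpha^n c \mid c \in \B_\lambda(w'),\ n \geq 0 \,\}.$$
Set $d := \tilde{F}_\alpha^k(b) \in \B_\lambda(w)$; by the above, $d = \tilde{F}_\alpha^n c$ for some $c \in \B_\lambda(w')$ and some $n \geq 0$. Since $d \neq 0$, property (v) of a crystal basis gives $b = \tilde{E}_\alpha^k(d)$, so $b$ and $c$ belong to the same $\alpha$-string.

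Finally I would locate $b$ inside that string using its $\alpha$-maximality. One has $\epsilon_\alpha(d) = k$ and hence $\epsilon_\alpha(c) = k - n$; this forces $n \leq k$, for otherwise $c = \tilde{E}_\alpha^{n-k}(b) = 0$, contradicting $c \in \B_\lambda(w')$. Therefore $b = \tilde{E}_\alpha^{k-n}(c)$, and since $\B_\lambda(w') \cup \{0\}$ is $\tilde{E}_\alpha$-invariant (dual to the $\tilde{F}_\alpha$-invariance in Proposition \ref{prop-main-prop-Demazure-crystal-basis}(3) applied to $w'$), we conclude $b \in \B_\lambda(w') \cup \{0\}$, and then $b \in \B_\lambda(w')$ because $b \neq 0$. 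The only nontrivial input is the recursive characterization of $\B_\lambda(s_\alpha w')$, which is the main obstacle in the sense that everything else is bookkeeping inside a single $\alpha$-string; once that characterization is cited, the argument is immediate.
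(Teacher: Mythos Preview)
Your argument is correct. The paper does not actually prove this lemma; it simply cites \cite[Lemma 3.3.3]{Kashiwara-Demazure} and uses it as a black box. So there is no ``paper's proof'' to compare against.

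Your route---dualizing to the primal crystal, then invoking Kashiwara's recursive description $\B_\lambda(s_\alpha w') \cup \{0\} = \{\tilde{F}_\alpha^n c \mid c \in \B_\lambda(w'),\, n \geq 0\}$ together with the $\tilde{E}_\alpha$-invariance of $\B_\lambda(w') \cup \{0\}$---is the standard way to extract this statement, and the $\alpha$-string bookkeeping you do (in particular the inequality $n \leq k$ forced by $\tilde{E}_\alpha(b)=0$) is clean and complete. One small remark: you should be explicit that the hypothesis $\tilde{E}_\alpha^k \cdot b^* \in (\B_\lambda(w))^*$ forces $d = \tilde{F}_\alpha^k(b) \neq 0$, since $(\B_\lambda(w))^*$ contains only nonzero elements; you use this implicitly when you write $b = \tilde{E}_\alpha^k(d)$. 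Also note that the $\tilde{E}_\alpha$-invariance you need is stated directly in the paper just before Proposition \ref{prop-main-prop-Demazure-crystal-basis}, so you need not route it through duality.

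The honest caveat you already flag is the right one: the recursive characterization of $\B_\lambda(s_\alpha w')$ is itself a theorem of \cite{Kashiwara-Demazure}, so you are trading one cited fact for another from the same source rather than giving a self-contained proof. That is perfectly acceptable here, since the paper treats all of this crystal machinery as imported background.
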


{
Before we start with the proof of Theorem \ref{th-alt-def-string-para} we introduce a bit of notation. For a simple root $\alpha$ and $\tau \in V_\lambda^*$ denote 
$F_\alpha^{\varphi_\alpha(\tau)} \cdot \tau$ by $L_\alpha(\tau)$, where $\varphi_\alpha$ is as in \eqref{equ-def-epsilon-phi}. 
The main point in the proof below is that if $b^* \in \B_\lambda^*$
then by repeated application of Lemma \ref{lem-Kashiwara} we see that $L_{\alpha_{i_j}} \cdots L_{\alpha_{i_1}} (b^*)$ lies in $(\B_\lambda(w_j))^*$ for every $j=1, \ldots, N$.
}

\begin{proof}[Proof of Theorem \ref{th-alt-def-string-para}]
{
Take $\sigma \in V_\lambda^* \setminus \{0\}$ and let $\iota_\S(\sigma) = (a_1, \ldots, a_N)$ be its string parameters. Also let $(e_1, \ldots, e_N)$ be as defined above.
From the definition $a_1 = e_1$. We wish to show that $a_2 = e_2$. Let $\sigma_1 = L_{\alpha_{i_1}}(\sigma)$ and write $\sigma_1 = \sum_i c_i b^*_i$ where $c_i \neq 0$ and 
$b_i^* \in \B_\lambda^*$ for all $i$. Then since $F_{\alpha_{i_1}} \cdot \sigma_1 = 0$ we conclude that 
$\tilde{F}_{\alpha_{i_1}}(b_i^*) = F_{\alpha_{i_1}} \cdot b_i^* = 0$ for all $i$.
Applying Lemma \ref{lem-Kashiwara} to the $b_i^*$, with $k = 0$, $w=w_0$ and $w' = w_1$, we see that $b_i^* \in (\B_\lambda(w_1))^*$. On the other hand by Proposition 
\ref{prop-main-prop-Demazure-crystal-basis} the set $(\B_\lambda(w_1))^* \cup \{0\}$ is stable under the $\tilde{F}_\alpha$ and the kernel of $\pi_{w_1}$ is spanned by $\B_\lambda^* \setminus (\B_\lambda(w_1))^*$.
From this it follows that $L_{\alpha_{i_2}}(\sigma_1) \notin \ker(\pi_{w_1})$. This proves that $a_2 = e_2$. Continuing the same way we get the desired result.
}
\end{proof}

Geometrically speaking, Theorem \ref{th-alt-def-string-para} states that if we regard
the elements of $V_\lambda(w_k)^*$ as sections of the $G$-linearized line bundle $L_\lambda$ restricted to the
Schubert variety $X_k$, in the step defining the string parameter $a_{k+1}$ we can restrict our 
section $\sigma_{k}$ to the Schubert variety $X_{k+1}$.

\section{Main result} \label{sec-main}
In this section we prove our main result.
Fix a lowest weight vector $\tau_\lambda$ in $H^*(G/B, L_\lambda) \cong V^*_\lambda$, i.e. a $B^-$-eigenvector.
The divisor $D_\lambda$ of $\tau_\lambda$ is $B^-$-invariant and hence does not intersect
the open opposite cell $\U^-$. In particular, $D_\lambda$ does not contain any
Schubert variety $X_w$. Let $\sigma \in H^0(G/B, L_\lambda)$ and write
$$\sigma = f_\sigma \tau_\lambda.$$ Since $\tau_\lambda$ does not vanish on
$\U^-$ then $f_\sigma$ has no pole on $\U^-$, i.e. $f_\sigma \in \c[\U^-]$.
Thus $\sigma \mapsto f_\sigma$ gives an embedding of $V_\lambda^* \cong H^0(G/B, L_\lambda)$
into $\c[\U^-] \subset \c(G/B)$.

As discussed in Section \ref{sec-Bott-Samelson} the reduced decomposition $\S$ defines a Bott-Samelson variety $\tilde{X}_\S$ and an (ordered) coordinate system $t_1, \ldots, t_N$. Recall that $v_\S$ denotes the highest term valuation associated to this coordinate system.

{ Our main result is that the valuation $v_\S$ restricts to the string parametrization $\iota_{\S}$. (We should also point out that the string parameterization $\iota_\S(\sigma)$ makes sense for any $\sigma \in V_\lambda^* \setminus \{0\}$, see Definition \ref{def-string-para-all-vectors}.)}


\begin{Th}[Main result] \label{th-main}
For any $0 \neq \sigma \in H^0(G/B, L_\lambda) \cong V_\lambda^*$ we have
$$\iota_\S(\sigma) = -v_\S(f_\sigma),$$ where $f_\sigma = \sigma / \tau_\lambda$.
\end{Th}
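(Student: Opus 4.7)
The plan is to combine the Demazure-module characterization of the string parametrization (Theorem \ref{th-alt-def-string-para}) with the fact that $v_\S$ is the highest-term valuation in the Bott-Samelson coordinates $t_1,\ldots,t_N$ on the open chart $\tilde U\subset \X_\S$ (Theorem \ref{th-Bott-Samelson-val-translated-Schubert-val}). Under this identification, $-v_\S(f_\sigma)$ equals the lex-leading multi-exponent of the pullback of $f_\sigma$ to $\tilde U$. Because $\S$ is a reduced decomposition of $w_0$, the image of $\tilde U$ under $\pi_\S$ lies in the open opposite cell $U^-\cdot o$ on which $\tau_\lambda$ is nowhere vanishing; hence $f_\sigma$ pulls back to a genuine polynomial in $t_1,\ldots,t_N$ on $\tilde U\cong \mathbb{A}^N$. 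The goal is to show that its lex-leading multi-exponent is $\iota_\S(\sigma)=(a_1,\ldots,a_N)$.

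The first key step is to interpret $F_{\alpha_{i_1}}$ as differentiation in $t_1$. Since $\tau_\lambda$ is a $B^-$-eigenvector, $F_\alpha\cdot\tau_\lambda=0$ for every simple root $\alpha$; consequently in the trivialization $\sigma=f_\sigma\tau_\lambda$ the Lie-algebra action of $F_\alpha\in\Lie(U^-)$ on sections reduces to the pure vector-field action on $f_\sigma$, with no zeroth-order correction. By Proposition \ref{prop-main-Bott-Samelson}(3) this vector field pulls back to $\partial/\partial t_1$ on $\tilde U$, giving
\[
f_{F_{\alpha_{i_1}}^e\sigma} \;=\; \frac{\partial^e f_\sigma}{\partial t_1^e}, \qquad e\geq 0.
\]
By Definition \ref{def-string-para-all-vectors}, $a_1$ is the largest $e$ with $F_{\alpha_{i_1}}^e\sigma\ne 0$, and since $\sigma\mapsto f_\sigma$ is injective this matches the largest $e$ with $\partial^e f_\sigma/\partial t_1^e\not\equiv 0$. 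Viewing $f_\sigma$ as a polynomial in $t_1$ over $\c[t_2,\ldots,t_N]$, we conclude $\deg_{t_1}f_\sigma=a_1$, identifying the first lex coordinate $\ell_1=a_1$. Its leading coefficient in $t_1$ is, up to the scalar $1/a_1!$, the function attached to $\pi_{w_1}(\sigma_1)\in V_\lambda^*(w_1)$, where $\sigma_1=F_{\alpha_{i_1}}^{a_1}\sigma$, because setting $t_1=0$ corresponds (via the Bott-Samelson subvariety $\X_1\subset \X_\S$) to restricting a section on $G/B$ to the Schubert variety $X_{w_1}$.

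Inducting on $N$, we apply the same argument to $\pi_{w_1}(\sigma_1)$ on $X_{w_1}$: its Bott-Samelson resolution is $\X_1\cong\X_{\w_1}$ with remaining coordinates $t_2,\ldots,t_N$, and the restricted reference section $\tau_\lambda|_{X_{w_1}}$ is still annihilated by $F_{\alpha_{i_2}}$ (since $X_{w_1}$ is $U^-_{\alpha_{i_2}}$-invariant). The recursion in Theorem \ref{th-alt-def-string-para} is matched step by step by this process, so by induction the lex-leading multi-exponent of the $t_1^{a_1}$-coefficient is $(a_2,\ldots,a_N)$. Combining with $\ell_1=a_1$ gives $-v_\S(f_\sigma)=(a_1,\ldots,a_N)=\iota_\S(\sigma)$.

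The main technical obstacle is justifying the ``no zeroth-order term'' statement and its persistence under restriction: at each level we must trivialize $L_\lambda$ along a reference section that is $F_{\alpha_{i_k}}$-annihilated, and verify that the Schubert variety used at that level is $U^-_{\alpha_{i_k}}$-invariant so that the Bott-Samelson coordinate $t_k$ still represents that action. Both follow cleanly from the $B^-$-eigenvector property of $\tau_\lambda$, from the reduced-decomposition hypothesis on $\S$ (which guarantees $w_{k-1}=s_{\alpha_{i_k}}w_k$), and from Propositions \ref{prop-main-Bott-Samelson} and \ref{prop-translated-Bott-Samelson}; once they are in place the inductive computation becomes a direct translation between Bott-Samelson differentiation and the Demazure recursion.
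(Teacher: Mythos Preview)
Your proposal is correct and follows essentially the same approach as the paper: both use the Demazure-module reformulation of $\iota_\S$ (Theorem \ref{th-alt-def-string-para}), the $B^-$-eigenvector property of $\tau_\lambda$ to reduce $F_\alpha\cdot\sigma$ to a vector-field action on $f_\sigma$, Proposition \ref{prop-main-Bott-Samelson} to identify that action with $\partial/\partial t_k$ on the Bott-Samelson chart, and Theorem \ref{th-Bott-Samelson-val-translated-Schubert-val} to match the resulting lex-leading exponent with $-v_\S$. The only cosmetic differences are that the paper carries a harmless sign $(-1)^a$ from $\xi\cdot f = df(\mathcal{V}_{-\xi})$ and packages the final step as an elementary lemma on polynomials rather than as an induction on $N$.
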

The negative sign appears because by definition the highest term valuation is
the negative of the highest exponent, see Example \ref{ex-Grobner-valuation}. Also note that a lowest weight vector is unique 
up to a constant and hence $v(f_\sigma)$ is independent of the choice of the lowest weight vector $\tau_\lambda$.

We then have:
\begin{Cor} \label{cor-main}
For any dominant weight $\lambda$, the string polytope $\Delta_\S(\lambda)$ can be identified with the
Newton-Okounkov body $\Delta(R(L_\lambda)) = \Delta(R(L_\lambda), v_\S, \tau_\lambda)$ associated to
the homogeneous coordinate ring $R(L_\lambda)$ and the valuation $v_\S$.
\end{Cor}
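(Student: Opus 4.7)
The plan is to derive Corollary \ref{cor-main} directly from Theorem \ref{th-main} by matching, degree by degree, the semigroups that define the two polytopes. Since $L_\lambda^{\otimes k}\cong L_{k\lambda}$, the $k$-th graded piece of $R(L_\lambda)$ is $H^0(G/B,L_{k\lambda})\cong V_{k\lambda}^*$. The $B^-$-eigensection $\tau_\lambda$ does not vanish on the opposite cell $\U^-$, and $Y_N=\{w_0 o\}\in\U^-$, so by the second half of Example \ref{ex-val-ring-of-sec} the valuation $\tilde v_\S$ satisfies $\tilde v_\S(\sigma)=(k,v_\S(f_\sigma))$ for every nonzero $\sigma\in H^0(G/B,L_{k\lambda})$. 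Theorem \ref{th-main}, applied at the dominant weight $k\lambda$, then gives $v_\S(f_\sigma)=-\iota_\S(\sigma)$.

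It remains to identify the values $\iota_\S(\sigma)$ with the integer points of $k\Delta_\S(\lambda)$. Theorem \ref{th-Littelmann} together with Remark \ref{rem-string-polytop} does this on the dual crystal basis $\B^*_{k\lambda}$, and Definition \ref{def-string-para-all-vectors}---combined with the standard fact (used in the proof of Theorem \ref{th-alt-def-string-para}) that every nonzero vector has a distinguished dual crystal basis element sharing its string parameter---extends the identification to all of $V_{k\lambda}^*\setminus\{0\}$. Therefore the value semigroup $S(R(L_\lambda),\tilde v_\S)$, once the overall sign absorbed by the highest-term convention recalled after Theorem \ref{th-main} is stripped off, is $\{(k,a):k\geq 0,\,a\in k\Delta_\S(\lambda)\cap\z^N\}$. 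The cone it generates is $\{(t,s):t\geq 0,\,s\in t\Delta_\S(\lambda)\}$, and slicing at $t=1$ according to Definition \ref{def-Newton-Okounkov-body} gives exactly $\Delta_\S(\lambda)$, proving the corollary.

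The only care needed is convention bookkeeping: aligning the lexicographic order on $\z^N$ used to define $v_\S$ in Section \ref{sec-Bott-Samelson}, the reversed first coordinate used in the Example \ref{ex-valuation-graded-alg} extension of $v_\S$ to $R(L_\lambda)$, and the sign flip inherent in the highest-term valuation of Example \ref{ex-Grobner-valuation}, so that the Newton-Okounkov body lands on the same side of the origin as $\Delta_\S(\lambda)$. The substantive work is of course concentrated in Theorem \ref{th-main}, where the main obstacle would be the inductive step identifying $\partial/\partial t_{k+1}$ on the restricted Bott-Samelson stratum $\X_k=\{t_1=\cdots=t_k=0\}$ (Proposition \ref{prop-main-Bott-Samelson}) with the Lie-algebraic $F_{\alpha_{i_{k+1}}}$-action used in the Demazure-module description of the string parameters (Theorem \ref{th-alt-def-string-para}).
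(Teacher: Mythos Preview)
Your argument is correct and follows essentially the same route as the paper's proof: both apply Theorem \ref{th-main} at each level $k\lambda$ to identify the value set of $\tilde v_\S$ on $H^0(G/B,L_{k\lambda})$ with the image $S_{k\lambda}$ of the string parametrization, invoke Littelmann's Theorem \ref{th-Littelmann} (and the homogeneity $\Delta_\S(k\lambda)=k\Delta_\S(\lambda)$ of Remark \ref{rem-string-polytop}) to identify $S_{k\lambda}$ with the lattice points in $k\Delta_\S(\lambda)$, and then read off the Newton--Okounkov body as the slice of the resulting cone. Your version is a bit more explicit about the sign/ordering bookkeeping and about passing from dual crystal basis elements to arbitrary $\sigma$, but these are elaborations of the same proof rather than a different approach.
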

\begin{proof}
Consider the subsemigroup $\mathcal{S}_\S(\lambda) \subset \mathcal{S}_\S$ defined by
$$\mathcal{S}_\S(\lambda) = \{ (k\lambda, a) \mid a \in S_{k\lambda} \}.$$
(Recall from Section \ref{sec-string-para} that $\mathcal{S}_\S$ denotes the set of values 
of { the string parametrization} $\iota_\S$ and $S_\lambda$ is the image of $\B^*_\lambda$ under the string parametrization.)
Let $\mathcal{C}_\S(\lambda)$ be the cone generated by $\mathcal{S}_\S(\lambda)$, that is,
{ the closure of the convex hull of} $\mathcal{S}_\S(\lambda) \cup \{0\}$.
It is easily seen that the string polytope $\Delta_\S(\lambda)$ is the slice of this cone at $\lambda$, i.e.
$\Delta_\S(\lambda) = \{a \mid (\lambda, a) \in \mathcal{C}_\S(\lambda)\}$. In other words:
$$\Delta_\S(\lambda) = \overline{\conv(\bigcup_{k>0} \{ a / k \mid (k\lambda, a) \in \mathcal{S}_\S(\lambda)\} )}.$$
But by Theorem \ref{th-main}, $(k\lambda, a) \in \mathcal{S}_\S(\lambda)$ is equivalent to
$(k, -a) \in S(R(L_\lambda))$ { and hence $\Delta_\S(\lambda) = - \Delta(R(L_\lambda))$. This finishes the proof.}
\end{proof}

The rest of the section is devoted to a proof of Theorem \ref{th-main}.
The main idea in the proof is that the action of the Lie algebra of $G$ (on the
rational functions, or the sections of a line bundle) is the derivative of the action of $G$.
Hence the action of the Lie algebra elements $F_\alpha$ corresponds to the differentiation
of functions. The number of times one should apply
$F_\alpha$ to a polynomial on $\U^{-}$ to get $0$, then corresponds to
the number of times one needs to differentiate the polynomial (in an appropriate direction) 
to get $0$. In an appropriate system of coordinates,
this gives us the highest power of the first coordinate variable appearing in the polynomial.
Continuing, we get the highest term of the polynomial (with respect to a certain lexicographic order).

Consider the action of a group $G$ on a variety $X$ { (in our case the action of 
$G$ on the flag variety $X=G/B$ from the left).}
Such an action gives an action of $G$ on the field of rational functions $\c(X)$ by $(g \cdot f)(x) =
f(g^{-1} \cdot x)$, $f \in \c(X)$, and hence an action of $\Lie(G)$ on $\c(X)$.
On the other hand, every Lie algebra element $\xi \in \Lie(G)$ generates a vector field $\mathcal{V}_\xi$ on $X$. { The next lemma follows directly from the definitions.}
\begin{Lem}\label{lem-gen-vec-field}
Take $\xi \in \Lie(G)$ and $f \in \c(X)$. Then $\xi \cdot f$ is equal to
the derivative of $f$ in the direction of
the generating vector field $\mathcal{V}_{-\xi}$ on $X$, i.e.
$\xi \cdot f = df(\mathcal{V}_{-\xi})$.
In particular, $F_\alpha \cdot f$ is equal to the derivative of
$f$ in the direction of $\mathcal{V}_{-F_\alpha}$.
\end{Lem}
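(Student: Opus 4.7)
The plan is to unwind the definitions on both sides and match them using the chain rule. Recall that the $\Lie(G)$-action on $\c(X)$ induced by the $G$-action on $\c(X)$ (which is itself induced from the $G$-action on $X$ via $(g\cdot f)(x) = f(g^{-1}\cdot x)$) is by definition
\[
(\xi\cdot f)(x) \;=\; \left.\frac{d}{dt}\right|_{t=0} (\exp(t\xi)\cdot f)(x) \;=\; \left.\frac{d}{dt}\right|_{t=0} f\bigl(\exp(-t\xi)\cdot x\bigr),
\]
for any $\xi\in\Lie(G)$, $f\in\c(X)$, and any $x$ in the domain of $f$.

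On the other hand, the generating vector field $\mathcal{V}_\eta$ of $\eta\in\Lie(G)$ is by definition the vector field whose value at $x\in X$ is
\[
\mathcal{V}_\eta(x) \;=\; \left.\frac{d}{dt}\right|_{t=0} \exp(t\eta)\cdot x.
\]
Thus, specializing to $\eta = -\xi$, we have $\mathcal{V}_{-\xi}(x) = \frac{d}{dt}\big|_{t=0}\exp(-t\xi)\cdot x$.

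Applying the chain rule to the curve $t\mapsto \exp(-t\xi)\cdot x$ in $X$ and the function $f$, one obtains
\[
\left.\frac{d}{dt}\right|_{t=0} f\bigl(\exp(-t\xi)\cdot x\bigr) \;=\; df_x\!\left(\left.\frac{d}{dt}\right|_{t=0}\exp(-t\xi)\cdot x\right) \;=\; df_x\bigl(\mathcal{V}_{-\xi}(x)\bigr).
\]
Combining this with the first displayed equation gives $(\xi\cdot f)(x) = df_x(\mathcal{V}_{-\xi}(x))$, which is the claim; the statement for $F_\alpha$ is the special case $\xi = F_\alpha$. There is no real obstacle here — the proof is essentially bookkeeping — but the one point worth emphasizing is the minus sign, which comes solely from the convention $(g\cdot f)(x) = f(g^{-1}\cdot x)$ used to make the $G$-action on $\c(X)$ a left action; without this inversion the formula would read $\xi\cdot f = df(\mathcal{V}_\xi)$. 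Since both sides are defined pointwise on the open set where $f$ is regular and the equality is an equality of rational functions on $X$, extending by continuity/rationality poses no issue.
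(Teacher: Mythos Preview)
Your proof is correct and follows essentially the same approach as the paper: unwind the definition of the induced $\Lie(G)$-action on $\c(X)$ as the derivative of $(\exp(t\xi)\cdot f)(x)=f(\exp(-t\xi)\cdot x)$, apply the chain rule, and recognize the tangent vector as $\mathcal{V}_{-\xi}(x)$. Your added remark on the origin of the minus sign is a useful clarification but does not change the argument.
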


\begin{proof}[Proof of Theorem \ref{th-main}]
Take  $\sigma \in H^0(G/B, L_\lambda)$ and write $\sigma = f \tau_\lambda$ where $f \in \c(X)$.
We wish to show that $\iota_\S(\sigma) = - v_\S(f).$
Let $\iota_\S(\sigma) = (a_1, \ldots, a_N)$ be the string parameters of $\sigma$.
Recall (Theorem \ref{th-alt-def-string-para}) that we can alternatively define $(a_1, \ldots, a_N)$ by
\begin{eqnarray*}
a_1 &=& \max\{a \mid F_{\alpha_{i_1}}^a \cdot \sigma \neq 0 \}, \cr
a_2 &=& \max\{a \mid (F_{\alpha_{i_2}}^a F_{\alpha_{i_1}}^{a_1} \cdot \sigma)_{|X_1} \neq 0 \}, \quad \textup{etc.} \cr
\end{eqnarray*}
where $X_N \subset \cdots \subset X_0 = X$ is the sequence of Schubert varieties associated to the reduced decomposition $\S$.
As $f$ is regular on $\U^-$ it has no pole on $X_1$.
Moreover, as $\tau_\lambda$ is $U^-$-invariant, for any $\alpha$ we have:
$$F_{\alpha} \cdot \sigma = (F_{\alpha} \cdot f) \tau_\lambda.$$
This implies that: 
\begin{eqnarray*}
a_1 &=& \max \{ a \mid F_{\alpha_{i_1}}^a \cdot f \neq 0\}, \cr
a_2 &=& \max \{ a \mid (F_{\alpha_{i_2}}^a F_{\alpha_{i_1}}^{a_1} \cdot f)_{|X_1} \neq 0 \}, \quad \textup{etc.} \cr
\end{eqnarray*}

Now consider the Bott-Samelson variety $\X = \X_{\S}$ and the coordinate system $t_{1}, \ldots, t_{N}$ on the 
affine open subset $\tilde{\U} \subset \X_{\S}$ (as in Proposition \ref{prop-Bott-Samelson-coord}). { Recall the 
birational morphism $\pi_{\S}: \X \to X=G/B$}. Also recall that we have 
a sequence of Bott-Samelson varieties $\X_k$ embedded in $\X$ and 
lying over the $X_k$ such that { (1) for each $k$, $\pi_{\S}: \X_k \to X_k$ is a birational morphism}, (2) in the open set $\tilde{\U}$, 
the subvariety $\X_k$ is given by $t_1 = \cdots = t_k = 0$, and (3) $\pi_{\S}$ maps $\tilde{\U}$ to $\U^-$.

Let $\tilde{f}$ denote the pull-back of $f$ to $\X$ by $\pi_{\S}$. As $f$ is regular on $\U^-$, 
$\tilde{f}$ is regular on $\tilde{\U}$. Note that, for each $k$,
the map $\pi_{\S}$ is equivariant with respect to the actions of $F_{\alpha_{i_k}}$ on $\X_k$ and $X_k$. Thus we have:
\begin{eqnarray*}
a_{1} &=& \max \{ a \mid F_{\alpha_{i_1}}^a \cdot \tilde{f} \neq 0\}, \cr
a_2 &=& \max \{ a \mid (F_{\alpha_{i_2}}^a F_{\alpha_{i_1}}^{a_1} \cdot \tilde{f})_{|\X_1} \neq 0 \}, \quad \textup{etc.} \cr
\end{eqnarray*}

On the other hand, Proposition \ref{prop-Bott-Samelson-coord} and Lemma \ref{lem-gen-vec-field} imply that:
\begin{equation} \label{equ-F_alpha=d/dt}
F_{\alpha_{i_1}}^a \cdot \tilde{f} = (-1)^a (\partial/\partial t_1)^a \tilde{f}, 
\end{equation}
which gives us:
$$a_1 = \max \{ a \mid (\partial/\partial t_{1})^a \tilde{f} \neq 0\}.$$
Next put $\tilde{f}_1 = ((\partial/\partial t_1)^{a_1} \tilde{f})_{|\X_1}$.
By (\ref{equ-F_alpha=d/dt}), $\tilde{f}_1$ is a constant times $(F_{\alpha_{i_1}}^{a_1} f)_{|\X_1}$.
Again Proposition \ref{prop-Bott-Samelson-coord} and Lemma \ref{lem-gen-vec-field} give: 
$$a_2 = \max \{ a \mid (\partial/\partial t_2)^a \tilde{f}_1 \neq 0\}.$$
Continuing the same way, for $k=1, \ldots, N$, we have 
$$a_k = \max \{ a \mid (\partial/\partial t_{k})^a \tilde{f}_{k-1} \neq 0\},$$
where the $\tilde{f}_k$ are defined inductively by $\tilde{f}_{k} = ((\partial/\partial t_{k})^{a_{k}} \tilde{f}_{k-1})_{|\X_{k}}.$
The theorem now follows from the following elementary lemma whose proof is straightforward { but we include it here for the sake of completeness}.
\begin{Lem}\label{lem-highest-term-val-derivative}
{ Let $h \in \c[t_1, \ldots, t_N]$}. Fix the lexicographic order on the monomials
with $t_1 > \cdots > t_N$. Let $v(h) = (v_1, \ldots, v_N) \in \z_{\geq 0}^N$ be the highest exponent of $h$. We then have:
\begin{eqnarray*}
v_{1} &=& \max \{ a \mid  (\partial/\partial t_1)^a h \neq 0\}, \cr
v_2 &=& \max \{ a \mid ((\partial/\partial t_2)^a (\partial/\partial t_1)^{v_1} h) \neq 0 \}, \quad \textup{etc.}  \cr
\end{eqnarray*}
{ In other words, define $h_k$ and $a_k$ inductively by $h_0 = h$, $a_k = \max\{a \mid (\partial/ \partial t_k)^a h_{k-1} \neq 0\}$ 
and $h_k = (\partial/\partial t_k)^{a_k} h_{k-1}$ for $k=1, \ldots, N$. Then $(a_1, \ldots, a_N) = (v_1, \ldots, v_N)$.}
\end{Lem}
\begin{proof}
{ Let the highest term in $h$ be $c t_1^{v_1} \cdots t_N^{v_N}$.
Write $h(t_1, \ldots, t_N) = q_1(t_2, \ldots, t_N) t_1^{v_1} + $ (lower terms in $t_1$), where $q_1$ is a polynomial in $t_2, \ldots, t_N$. Then the highest term in $q_1$ is 
$c t_2^{v_2} \cdots t_N^{v_N}$. Since $v_1 = \max\{a \mid (\partial/ \partial t_1)^a (t_1^{v_1}) \neq 0\}$ it follows that $v_1 = a_1 = \max\{ a \mid (\partial / \partial t_1)^a h \neq 0 \}$.
Moreover $h_1 = (\partial / \partial t_1)^{v_1}(h)$ is equal to $(v_1!)q_1(t_2, \ldots, t_N)$. Write $q_1(t_2, \ldots, t_N)$ as $q_2(t_3, \ldots, t_N)t_2^{v_2} +$ (lower terms in $t_2$) and
continue as before to arrive at $(a_1, \ldots, a_N) = (v_1, \ldots, v_N)$.}
\end{proof}
\end{proof}

Corollary \ref{cor-main} is closely related to an earlier result of Okounkov on the
Gelfand-Cetlin polytopes for the symplectic group (\cite[Theorem 2]{Ok-NP}):
\begin{Rem}[Gelfand-Cetlin polytopes for symplectic group] \label{rem-Ok-SP(2n)}
Let $G = \Sp(2n, \c)$. Choose a basis $e_1, \ldots, e_{2n}$ of
$\c^{2n}$ in which the matrix of the symplectic form is
$$\left[\begin{matrix} &&&&&1 \\ &0&&&\ldots& \\ &&&1&& \\ &&-1&&& \\
&\ldots&&&0& \\ -1&&&&& \\
\end{matrix}\right].$$
Let $T$, $B^-$, $U^-$ be the subgroups of diagonal, lower triangular and
lower triangular with $1$'s on the diagonal matrices respectively.
Let $x_{ij}$, $1 \leq i,j \leq 2n$, denote the matrix entries for the elements of
$G$. Then $x_{ij}$, ${i>j; i+j \leq 2n+1}$ are coordinates for $U^-$, which can also be considered as
coordinates on the open opposite Schubert cell $\U^-$ under the map $u \mapsto uo$.
Take any dominant weight $\lambda$. As above, embed the irreducible representation
$H^0(G/B, L_\lambda) \cong V^*_\lambda$
into the polynomial ring $\c[\U^-] \cong \c[U^-]$. Then any $f \in V_\lambda^*$ can be represented as a polynomial in the
variables $x_{ij}$, ${i>j; i+j \leq 2n+1}$. Okounkov's result states that,
after a fixed linear change of coordinates in $\z^N$, the set of exponents of
the lowest terms of polynomials $f \in V_\lambda^*$ (with respect to a certain natural lexicographic order)
coincides with the set of integral points in the Gelfand-Cetlin polytope associated to $\lambda$.

We should point out that the Gelfand-Cetlin basis (for classical groups)
in general is not a crystal basis. Although as mentioned in Remark \ref{rem-string-G-C}, 
the string parameters of a crystal basis and Gelfand-Cetlin parameters of a Gelfand-Cetlin basis coincide after a fixed linear 
change of coordinates. It is interesting to note that 
in Okounkov's result the lowest term of polynomials appears as opposed to highest term valuation in our Theorem \ref{th-main}.
\end{Rem}

\begin{Rem}[Schubert varieties] \label{rem-Schubert-main}
{ Let $w$ be a Weyl group element of length $\ell$. Given a reduced decomposition 
$$\w = (\alpha_{i_1}, \ldots, \alpha_{i_\ell}), ~w = s_{\alpha_{i_1}} \cdots s_{\alpha_{i_\ell}},$$
one can define the string parameterization $\iota_\w: (\B_\lambda(w))^* \to \z_{\geq 0}^\ell$ (as in Definition \ref{def-string-para-dual-basis}) or more generally 
$\iota_\w: V_\lambda(w)^* \setminus \{0\} \to \z_{\geq 0}^\ell$ (as in Definition \ref{def-string-para-all-vectors}). 
Extend the reduced decomposition $\w$ to a reduced decompotions 
$\S = (\alpha_{i_1}, \ldots, \alpha_{i_\ell}, \ldots, \alpha_{i_N})$ for the longest element $w_0$. 
Consider:$$\mathcal{S}_\w = \bigcup_{\lambda \in \Lambda^+} \{(\lambda, \iota_\w(b^*)) \mid b^* \in (\B_\lambda(w))^*\} \subset \Lambda^+ \times \z_{\geq 0}^\ell.$$
One shows that $\mathcal{S}_\w$ coincides with $\C_\S \cap (\z_{\geq 0}^\ell \times \{0\})$ (see \cite{Littelmann}). 
One can then define the string polytope $\Delta_\w(\lambda)$ as as a slice of the string polytope $\Delta_\S(\lambda)$ by:
$$\Delta_\w(\lambda) = \Delta_\S(\lambda) \cap (\r^\ell \times \{0\}).$$
The polytope $\Delta_\w(\lambda)$ has the property that the number of integral points in it is equal to the dimension of the Demazure module $V_\lambda(w)$.
On the other hand, given a reduced decomposition $\w$, one constructs a Bott-Samelson variety $\tilde{X}_\w$ and a birational morphism $\pi_\w$ from $\tilde{X}_\w$ 
to the Schubert variety $X_w$.
Also as in Proposition \ref{prop-Bott-Samelson-coord} we have an open neighborhood $\tilde{\U}_\w \subset \tilde{X}_\w$ and a coordinate system $t_1, \ldots, t_\ell$ on 
$\tilde{\U}_\w$ which gives rise to a highest weight valuation $v_\w: \c(X_w) \setminus \{0\} \to \z^\ell$. Exactly as in the proof of Theorem \ref{th-main} one shows the following:
Take $\sigma \in H^0(X_w, {L_\lambda}_{|X_w}) \cong V_\lambda(w)^*$ and as before let $\tau_{\lambda}$ denote a lowest weight vector in $H^0(G/B, L_\lambda)$. Note that the divisor of $\tau_\lambda$ does not contain any Schubert variety and in particular ${\tau_\lambda}_{|X_w}$ is not identically zero. Thus we can write $\sigma = f_\sigma ({\tau_{\lambda}}_{|X_w})$ for some 
$f_\sigma \in \c(X_w)$. We then have:
\begin{equation} \label{equ-main-th-Schubert}
\iota_\w(\sigma) = - v_\w(f_\sigma).
\end{equation}
} 
\end{Rem}
\section{An example} \label{sec-example}
Let $G = \SL(3, \c)$. Let $\alpha, \beta, \gamma$ denote the positive roots of $G$ with $\alpha$, $\beta$ the simple roots and 
$\gamma = \alpha + \beta$. 
{ Consider the adjoint representation of $G$ on $\Lie(G)$}. It is isomorphic to the highest weight representation $V_\gamma$. 
Note that $V_\gamma \cong V_\gamma^*$. The adjoint representation $V_\gamma$ has dimension $8$ and decomposes into sum of $T$-weight spaces as: 
$$V_\gamma = W_\alpha \oplus W_\beta \oplus W_\gamma \oplus W_{-\alpha}
\oplus W_{-\beta} \oplus W_{-\gamma} \oplus W_0,$$ 
where $W_\mu$ denotes the $T$-weight space with weight $\mu$. { The weight space $W_0$ is a Cartan subalgebra in 
$\Lie(G)$ and is $2$-dimensional}. The other weight spaces have extremal weights and hence are $1$-dimensional.
One can compute that $$\mathcal{B} = \{E_\alpha, E_\beta, E_\gamma, F_\alpha, F_\beta, F_\gamma, T_1, T_2\},$$ is a crystal basis 
for $V_\gamma$ where:  
$$T_1 = \left[\begin{matrix} 1&0&0 \\ 0&1&0 \\ 0&0&-2 \\ \end{matrix}\right] \quad T_2 = \left[\begin{matrix} 
-2&0&0 \\ 0&1&0 \\ 0&0&1 \\
\end{matrix}\right].$$
The crystal graph of $V_\gamma$ is shown below:

\hspace{4.5cm}
\xygraph{ !{<0cm,0cm>;<1cm,0cm>:<0cm,1cm>::} 
!~-{@{-}@[|(2.5)]} 
!{(1,3) }*+{F_\gamma}="a" 
!{(1,6) }*+{F_\alpha}="c"
!{(1,7) }*+{0}="c0" 
!{(3,1.5) }*+{F_\beta}="b" 
!{(4,.5) }*+{0}="b0" 
!{(5,3)}*+{E_\alpha}="d" 
!{(6,2) }*+{0}="d0" 
!{(5, 6)}*+{E_\gamma}="e"
!{(6,5) }*+{0}="e0" 
!{(5,7) }*+{0}="e1" 
!{(3,7.5) }*+{E_\beta}="f"
!{(3,8.5) }*+{0}="f1" 
!{(3,5.5)}*+{T_1}="T1"
!{(4,4.5) }*+{0}="T11" 
!{(3, 3.5)}*+{T_2}="T2"
!{(3,4.5) }*+{0}="T20" 
"a":"c" 
"c":"c0"
"c":"T2"
"a":"b"
"b":@/^.5cm/"T1"
"b":"b0"
"T2":"T20"
"T2":"d"
"d":"d0"
"d":"e"
"T1":"T11"
"T1":"f"
"f":"f1"
"f":"e"
"e":"e0"
"e":"e1"
} 

\vspace{.5cm}
In the above, the slant arrows correspond to $\tilde{E}_\alpha$ and the vertical arrows correspond to $\tilde{E}_\beta$. Also $0$ denotes the 
zero vertex in the crystal graph.
 
Take the reduced decomposition $w_0 = s_\alpha s_\beta s_\alpha$. One computes that 
the string parameters of the crystal basis $\mathcal{B}$ for the choice of $\S$ are:\\

\begin{tabular}{|c|c|}  
\hline
Dual crystal basis element & String parameters\\
\hline 
$E_\alpha^*$  & $(0, 1, 0)$\\ \hline
$E_\beta^*$  & $(1, 0, 0)$\\ \hline
$E_\gamma^*$ & $(0, 0, 0)$\\ \hline
$F_\alpha^*$ & $(2, 1, 0)$\\ \hline
$F_\beta^*$ & $(0, 2, 1)$\\ \hline
$F_\gamma^*$ & $(1, 2, 1)$\\ \hline
$T_1^*$ & $(0, 1, 1)$\\ \hline
$T_2^*$ & $(1, 1, 0)$\\
\hline
\end{tabular}
\vspace{.5cm}



Let $\mathcal{U}^- = U^- o$ be the opposite open cell in $G/B$. Consider the map 
$$\Phi_{\S}: \c^3 \to U_\alpha^- \times U_\beta^- \times U_\alpha^- \to \mathcal{U}^-,$$ given by
$$\Phi_{\S}(t_1, t_2, t_3) = \exp(t_1F_\alpha)\exp(t_2F_\beta)\exp(t_3F_\alpha) o.$$ 
Identifying $\mathcal{U}^-$ with $U^-$, the map $\Phi_{\S}$ is given by:
$$\Phi_{\S}(t_1, t_2, t_3) = \left[\begin{matrix} 1& 0 & 0\\ t_1+t_3 &1& 0\\ t_2t_3 & t_2 &1 \\ \end{matrix}\right] .$$
The open opposite cell $\mathcal{U}^-$  embeds in $\p(V_\gamma^*)$ as the $U^-$ orbit of the highest weight 
vector $E_\gamma$.  
One computes that the image of $\mathcal{U}^-$ in $\p(V_\gamma^*)$ is:
$$\left[\begin{matrix} t_1t_2&-t_2&1\\ t_1t_2(t_1+t_3)&-t_2(t_1+t_3)&t_1+t_3\\t_1t_2^2t_3&-t_2^2t_3&t_2t_3\\ 
\end{matrix}\right].$$
In the coordinates $t_1, t_2, t_3$ for the opposite open cell, 
the elements of the (dual) crystal basis $\mathcal{B}^*$ for $V_\gamma^*$ correspond to polynomials in $t_1, t_2, t_3$.
We have the following list: \\

\begin{tabular}{|c|c|c|}
\hline
Dual crystal basis element & Corresponding polynomial & Exponent of the highest term \\
\hline
$E_\alpha^*$ & $-t_2$ & $(0,1,0)$\\ \hline
$E_\beta^*$ & $t_1 + t_3$ & $(1,0,0)$\\ \hline
$E_\gamma^*$ & $1$ & $(0, 0, 0)$\\ \hline
$F_\alpha^*$ & $t_1t_2(t_1+t_3)$ & $(2,1,0)$ \\ \hline
$F_\beta^*$ & $-t_2^2t_3$ & $(0,2,1)$\\ \hline
$F_\gamma^*$ & $t_1t_2^2t_3$ & $(1,2,1)$\\ \hline
$T_1^*$ & $-3t_2t_3$ & $(0,1,1)$\\ \hline
$T_2^*$ & $-3t_1t_2$ & $(1,1,0)$\\ \hline
\end{tabular}
\vspace{.5cm}

\noindent which clearly coincides with the string parameterization of the dual crystal basis for the choice of $\S$.
Below we have drawn the corresponding string polytope (Figure \ref{figure}). 

\begin{figure} 
\begin{center}
\begin{tikzpicture}[scale=.75]
\coordinate (A1) at (2.4,1.2);
\coordinate (A2) at (2.8, .4);
\coordinate (A3) at (6, 2);
\coordinate (A4) at (4.4, 1.2);
\coordinate (O) at (4, 2);
\coordinate (B1) at (6, 4);
\coordinate (B2) at (6.4, 3.2);
\coordinate (B3) at (8, 4);
\coordinate (X) at (0, 0); 
\coordinate (Y) at (10, 2);
\coordinate (Z) at (4, 6);

\fill (A1) circle [radius=3pt];
\fill (A2) circle [radius=3pt];
\fill (A3) circle [radius=3pt];
\fill (A4) circle [radius=3pt];
\fill (O) circle [radius=3pt];
\fill (B1) circle [radius=3pt];
\fill (B2) circle [radius=3pt];
\fill (B3) circle [radius=3pt];

\draw[dashed][very thick] (A1) -- (O) -- (B1);
\draw[dashed][very thick] (A3) -- (O);
\draw[very thick] (A1) -- (B1) -- (B2) -- (A2) -- cycle;
\draw[very thick] (B1) -- (B3) -- (B2);
\draw[very thick] (A2) -- (A3) -- (B3);
\draw (Y) -- (A3);
\draw (X) -- (A1);
\draw (Z) -- (O);
\end{tikzpicture}
\end{center}
\caption{String polytope for $G=SL(3, \c)$, $\lambda = \alpha + \beta$ and $w_0 = s_\alpha s_\beta s_\alpha$} \label{figure}
\end{figure}
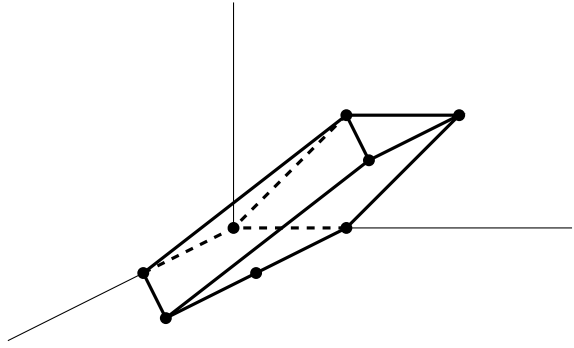

\section{A new proof of multiplicativity property of dual canonical basis} \label{sec-multi-prop}
Consider the algebra $\A = \c[G/U]$ of regular functions on { the quasi-affine homogeneous space $G/U$}.
It can be naturally identified with the algebra of $U$-invariant regular functions on $G$ for the right action 
of $U$ on $G$. Consider the action of $G \times T$ on $\A$ where $G$ acts from the left and $T$ acts
from the right (since $T$ normalizes $U$, the right action of $T$ on $G/U$ is well-defined).
One knows that as a $G \times T$-module $\A$ has a decomposition:
$$\A = \bigoplus_{\lambda \in \Lambda^+} V_\lambda^*,$$ where
$T$ acts on the irreducible representation $V_\lambda^*$ via the character
$\lambda$.

The algebra $\A$ has a basis $\B^*$  such that 
for each $\lambda$, $\B^* \cap V_\lambda^* = \B_\lambda^*$ is the dual crystal basis for $V_\lambda^*$ (see the last paragraph in Section \ref{sec-crystal}).
We call $\B^*$ the {\it dual canonical basis} for the algebra $\A$.

Consider the map $j: T \times U^- \to G/U$ given by
$(t, u) \mapsto tu U$. It identifies $T \times U^-$ with the open subset $B^- U$ in $G/U$.
The restriction map $j^*: \c[G/U] \to \c[T \times U^-]$ is then an embedding of
algebras.

{ There is a well-known partial order on the weight lattice $\Lambda$: $\lambda \geq \mu$ if $\lambda - \mu$ is 
a linear combination of the simple roots with nonnegative integer coefficients.} It has the important property that: 
{\it for $\lambda, \mu, \gamma \in \Lambda^+$, if $V_\gamma$ appears in 
$V_\lambda \otimes V_\mu$ then $\gamma \leq \lambda + \mu$.} One can extend this to a total order on $\Lambda$: 
Take a vector $\xi$ in the interior of the dual cone to the positive Weyl chamber $\Lambda_\r^+$. Moreover assume that $\xi$ is irrational with respect to 
$\Lambda$ i.e. there is no $\lambda \in \Lambda$ with $\langle \xi, \lambda \rangle = 0$. For two weights 
$\lambda, \mu \in \Lambda$ define $\lambda > \mu$ if and only if $\langle \xi, \lambda \rangle > \langle \xi, \mu \rangle$. Since $\xi$ is irrational with respect to $\Lambda$ we see that $\langle \xi, \lambda \rangle = \langle \xi, \mu \rangle$ implies that $\lambda = \mu$, i.e. 
this is a total order. Also since $\xi$ is in the interior of the dual cone to $\Lambda_\r^+$ if $\lambda$ is equal to $\mu$ plus a positive linear combination of simple roots then $\langle \xi, \lambda - \mu \rangle > 0$
and hence this ordering extends the above partial order.
Finally, equip the group $\z^N$ with the lexicographic order corresponding to the standard basis and 
define a total order on $\Lambda \times \z^N$ by: $(\lambda, \alpha) > (\mu, \beta)$ if
$\lambda > \mu$, or $\lambda = \mu$ and $\alpha > \beta$.

Given a reduced decomposition $\S$ we define a valuation
${\bf v}_\S$ on the algebra $\c[T \times U^-] \cong \c[T] \otimes \c[U^-]$
with values in $\Lambda^+ \times \z_{\geq 0}^N$ equipped with the above total order.
The image of this valuation on $\c[G/U]$ will coincide with the total image of the string parametrization, i.e.
the semigroup of all the integral points in the cone $\mathcal{C}_\S$ (see Section \ref{sec-string-para}).
Take a function $f \in \c[T \times U^-]$.
It can be written as $f = \sum_{\gamma \in \Lambda} \chi^\gamma \otimes f_\gamma$, where
$\chi^\gamma$ is the character of $T$ corresponding to a weight $\gamma$ and
$f_\gamma \in \c[U^-]$ is a polynomial on the affine space $U^-$. Let
$$\lambda = \min\{ \gamma \mid f_\gamma \neq 0 \},$$
and put $${\bf v}_\S(f) = (\lambda, v_\S(f_\lambda)),$$
where we have identified $\U^-$ and $U^-$, and $v_\S$ is the highest term valuation corresponding to the reduced decomposition 
$\S$ defined in Section \ref{sec-Bott-Samelson}.
\begin{Prop} \label{prop-v-tilde-valuation}
(1) ${\bf v}_\S$ is a valuation with one-dimensional leaves on the algebra $\c[T \times U^-]$, and hence
on $\c[G/U]$ via the embedding $j^*: \c[G/U] \to \c[T \times U^-]$.
(2) The values ${\bf v}_\S(b^*)$, for the dual canonical basis elements $b^* \in \B^*$,
are distinct.
(3) The value semigroup $S(\A, {\bf v}_\S)$
coincides with the set of integral points in the cone $\mathcal{C}_\S$.
\end{Prop}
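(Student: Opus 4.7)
The plan is to check the three assertions in turn, using the tensor decomposition $\c[T \times U^-] \cong \c[T] \otimes \c[U^-]$ together with Theorem \ref{th-main} to pin down the values of ${\bf v}_\S$ on dual canonical basis elements explicitly. For (1), the definition of ${\bf v}_\S$ is exactly the iterated construction of Example \ref{ex-valuation-graded-alg}: the algebra $\c[T] \otimes \c[U^-]$ is graded by $\Lambda$ via the $T$-character, and on each graded piece we record the highest-term valuation $v_\S$ of the $\c[U^-]$-factor. Additivity holds because the minimum weight appearing in $f+g$ is at least the smaller of the two minima, with equality unless cancellation occurs; multiplicativity uses that $\chi^\gamma \chi^{\gamma'} = \chi^{\gamma+\gamma'}$, that the chosen ordering on $\Lambda$ respects addition (so the minimum weight of a product of nonzero elements is the sum of the minima, since $\c[T] \otimes \c[U^-]$ is a domain), and that $v_\S$ itself is multiplicative on $\c[U^-]$. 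The one-dimensional-leaves property is inherited from $v_\S$, since whenever two elements share the same ${\bf v}_\S$-value they automatically share the same minimum weight and the same $v_\S$-value on that weight's coefficient. Pulling back along the embedding $j^*$ gives the valuation on $\c[G/U]$.

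For (2), I will compute ${\bf v}_\S$ explicitly on each $b^* \in \B^*_\lambda$. Right $T$-equivariance of the decomposition $\A = \bigoplus_\mu V_\mu^*$ forces $j^*(b^*) = \chi^\lambda \otimes f_{b^*}$ for a unique $f_{b^*} \in \c[U^-]$. The crucial observation is that, after identifying $U^-$ with the open Schubert cell $\U^- \subset G/B$ through the projection $G/U \to G/B$, the polynomial $f_{b^*}$ agrees with the rational function $b^*/\tau_\lambda$ associated to $b^*$ viewed as a section of $L_\lambda$; both simply arise from trivializing $L_\lambda$ on $\U^-$ using $\tau_\lambda$ (equivalently, from the $T$-equivariant lift of sections to functions on $G/U$). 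Granting this identification, Theorem \ref{th-main} gives ${\bf v}_\S(b^*) = (\lambda, v_\S(f_{b^*})) = (\lambda, -\iota_\S(b^*))$. Different weights $\lambda$ yield distinct first coordinates, and within each $\B^*_\lambda$ the injectivity of $\iota_\S$ (Theorem \ref{th-Littelmann}(1)) yields distinct second coordinates.

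For (3), once (2) is established, Proposition \ref{prop-valuation-subspace}(2) shows that any nonzero $f \in \A$, expanded in the basis $\B^*$, satisfies ${\bf v}_\S(f) = \min_i {\bf v}_\S(b_i^*)$ over the basis elements appearing with nonzero coefficient; in particular every value is realized by some single $b^*$. Hence $S(\A, {\bf v}_\S) = \{(\lambda, -\iota_\S(b^*)) \mid \lambda \in \Lambda^+,\ b^* \in \B^*_\lambda\}$. Modulo the sign convention already built into the highest-term valuation of Example \ref{ex-Grobner-valuation}, this is precisely $\mathcal{S}_\S$, which by Theorem \ref{th-Littelmann}(2) is the set of integral points of $\mathcal{C}_\S$. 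The main obstacle is the identification in (2) between the $\c[G/U]$-picture of $b^*$ and its realization as a section of $L_\lambda$ divided by $\tau_\lambda$: this is the step where Theorem \ref{th-main} is actually invoked, and it carries all the geometric content of the proposition. The rest of the argument is essentially formal once this bridge is in place.
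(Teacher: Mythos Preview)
Your proof is correct and follows essentially the same approach as the paper's: part (1) is the standard graded-valuation construction, and parts (2) and (3) both reduce to Theorem \ref{th-main} combined with Theorem \ref{th-Littelmann}. You supply considerably more detail than the paper's terse proof (in particular making explicit the identification of $f_{b^*}$ with $b^*/\tau_\lambda$ and correctly flagging the sign convention in (3)), but the logical skeleton is the same.
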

\begin{proof}
Part (1) is a straightforward corollary of the definition of ${\bf v}_\S$ and the
fact that $v_\S$ is a valuation on $\c[U^-]$ with one-dimensional leaves. { Part (2) follows from Theorem \ref{th-main} and} the fact that, for any
dominant weight $\lambda$, $v_\S$ attains distinct values on the dual basis $\B_\lambda^*$.
{ Part (3) follows immediately from the definition of} ${\bf v}_\S$, Theorem \ref{th-main} and
Theorem \ref{th-Littelmann}.
\end{proof}

From the general properties of valuations on algebras
(Proposition \ref{prop-multi-prop}), { we now immediately recover}
the following multiplicativity property of the dual canonical basis due to
P. Caldero (\cite[Section 2]{Caldero}):

\begin{Cor}[Multiplicativity property of dual canonical basis]
\label{cor-multi-prop-canonical-basis}
Let $\lambda, \mu$ be two dominant weights. Take $b'^* \in \B_\lambda^*$ and $b''^* \in \B_\mu^*$. 
Then the product $b'^*b''^* \in V^*_{\lambda + \mu} \subset \A$ can be uniquely written as
$$b'^* b''^* = cb^* + \sum_j c_j b_j^*,$$ where
$b^*$ and the $b_j^*$ are in $\B^*_{\lambda+\mu}$, $\iota_\S(b^*) = \iota_\S(b'^*) +
\iota_\S(b''^*)$, and $\iota_\S(b_j^*) < \iota_\S(b'^*) + \iota_\S(b''^*)$ whenever $c_j \neq 0$.
\end{Cor}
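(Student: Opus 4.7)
The plan is to apply the general multiplicativity property for valuations on algebras (Proposition \ref{prop-multi-prop}) to the dual canonical basis $\B^*$ of the algebra $\A = \c[G/U]$ equipped with the valuation ${\bf v}_\S$ constructed in Proposition \ref{prop-v-tilde-valuation}. Almost all the essential work has already been packaged into earlier statements, so what remains is to verify the hypotheses and translate the output from the language of ${\bf v}_\S$ to that of the string parametrization $\iota_\S$.

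First I would invoke Proposition \ref{prop-v-tilde-valuation}, which furnishes exactly the two hypotheses needed: that ${\bf v}_\S$ is a valuation on $\A$ with one-dimensional leaves, and that the values ${\bf v}_\S(b^*)$ for $b^* \in \B^*$ are pairwise distinct. Proposition \ref{prop-multi-prop} then applies directly to the product $b'^* b''^*$, yielding a unique expansion
$$b'^* b''^* = c b^* + \sum_j c_j b_j^*, \qquad b^*, b_j^* \in \B^*,$$
in which $c, c_j$ are nonzero scalars, ${\bf v}_\S(b^*) = {\bf v}_\S(b'^*) + {\bf v}_\S(b''^*)$, and ${\bf v}_\S(b_j^*) \gneqq {\bf v}_\S(b'^*) + {\bf v}_\S(b''^*)$ for each $j$.

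The next step is to translate these valuation-theoretic statements into statements about $\iota_\S$. For any $b^* \in \B^*_\nu$ the definition of ${\bf v}_\S$ together with Theorem \ref{th-main} gives
$${\bf v}_\S(b^*) = (\nu,\, v_\S(f_{b^*})) = (\nu,\, -\iota_\S(b^*)).$$
Since $b'^* \in V_\lambda^*$ and $b''^* \in V_\mu^*$, their product lies in $V^*_{\lambda+\mu}$, so every basis element $b^*, b_j^*$ appearing in the expansion belongs to $\B^*_{\lambda+\mu}$. In particular the weight coordinate of ${\bf v}_\S$ is constantly $\lambda+\mu$ on every term, and the comparison of valuations reduces to a comparison of their second coordinates in the lexicographic order on $\z^N$. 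Thus the equation ${\bf v}_\S(b^*) = {\bf v}_\S(b'^*) + {\bf v}_\S(b''^*)$ becomes $\iota_\S(b^*) = \iota_\S(b'^*) + \iota_\S(b''^*)$, and the strict inequalities ${\bf v}_\S(b_j^*) \gneqq {\bf v}_\S(b'^*) + {\bf v}_\S(b''^*)$ become $\iota_\S(b_j^*) < \iota_\S(b'^*) + \iota_\S(b''^*)$, matching the statement of the corollary.

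The only point requiring care is the sign flip between the highest term valuation $v_\S$ and the (nonnegative) string parameters $\iota_\S$, together with the corresponding reversal of the order on the second coordinate of $\Lambda \times \z^N$; once this bookkeeping is fixed, the argument is purely formal and no further input about crystal bases is required beyond what is already encoded in Theorem \ref{th-main} and Theorem \ref{th-Littelmann}.
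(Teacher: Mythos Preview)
Your proposal is correct and follows the same approach as the paper, which simply states that the corollary is obtained ``immediately'' from Proposition \ref{prop-multi-prop} once Proposition \ref{prop-v-tilde-valuation} is in place. You have spelled out the translation between ${\bf v}_\S$ and $\iota_\S$ (via Theorem \ref{th-main} and the sign flip) more explicitly than the paper does, but the argument is identical in substance; the reference to Theorem \ref{th-Littelmann} at the end is not actually needed for this corollary.
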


{Our proof presented here is more geometric compared to the original proof in \cite{Caldero}, in the sense that it interprets the string parameterization as a geometric valuation.}

\section{SAGBI bases, valuations and toric degenerations} \label{sec-SAGBI}
This section is closely related to \cite[Section 5]{Anderson}.
Let $\c[x_1, \ldots, x_d]$ be the polynomial algebra in $d$
variables. Fix a well-ordering $<$ on $\z_{\geq 0}^d$ respecting addition, e.g. a lexicographic order.
Let $v$ denote the highest term valuation on $\c[x_1, \ldots, x_d]$ (Example \ref{ex-Grobner-valuation}), that is,
$v(f) = - \max \{ \alpha \mid c_\alpha \neq 0 \}$ where 
$f(x) = \sum_{\alpha = (a_1, \ldots, a_d) \in \z_{\geq 0}^d} c_\alpha x_1^{a_1} \cdots x_d^{a_d}$ (alternatively one can use a lowest term valuation).
Let $A$ be a subalgebra of $\c[x_1, \ldots, x_d]$. Define $S(A, v) = \{v(f) \mid f \in A \setminus \{0\}\}$. It is an additive 
semigroup in $\z_{\geq 0}^d$.
The subalgebra $A$ is said to have
a SAGBI basis ({\it Subalgebra analogue of Gr\"{o}bner basis for
Ideals}), with respect to the ordering $<$, if the semigroup $S(A, v)$ is finitely generated. A
collection of polynomials $f_1, \ldots, f_t$ such that $v(f_1),
\ldots v(f_t)$ is a set of generators for $S(A, v)$ is called a
SAGBI basis for $A$ (see \cite[Chap. 11]{Sturmfels}).

{ A remarkable property} of a SAGBI basis is that one can represent
every $h \in A$ as a polynomial in the $f_i$ in a simple
algorithmic way: write $v(h) = d_1 v(f_1) + \cdots + d_r v(f_t)$
with $d_1, \ldots, d_t \in \z_{\geq 0}$. Dividing the leading
coefficient of $h$ by the leading coefficient of ${f_1}^{d_1} \cdots
{f_t}^{d_t}$, we obtain $c$ such that the leading term of $h$ is
the same as the leading term of $c{f_1}^{d_1} \cdots {f_t}^{d_t}$.
Set $g = h - c{f_1}^{d_1} \cdots {f_t}^{d_t}$. If $g = 0$, we are
done; otherwise replace $h$ by $g$ and proceed inductively. Since
$g$ has a strictly smaller leading exponent than $h$, and $\z_{\geq 0}^d$
is well-ordered with respect to $<$, this process will
terminate, resulting in an expression for $h$ as a polynomial in the
$f_i$. This classical algorithm is referred to as the {\it subduction
algorithm}.

The SAGBI bases play an important role in computational algebra when
one deals with subalgebras of polynomials. Existence of a SAGBI
basis is a rather strong condition on the subalgebra. It is an
important unsolved problem to determine which subalgebras posses a
SAGBI basis. There are examples of subalgebras that have no SAGBI
basis with respect to any term order. On the other hand there are
subalgebras which have a SAGBI basis for one choice of a term order
and no SAGBI basis for another choice (see \cite[Chap. 11]{Sturmfels}).

The concept of a SAGBI basis can be generalized to subalgebras
of the Laurent polynomials \cite{Zinovy}. In this case, since the
set of exponents lies in $\z^d$ which is not a well-ordered set, for example with respect to any lexicographic order, one
requires that the subduction algorithm terminates in a finite number
of steps.

{Below we generalize the notion of SAGBI basis to arbitrary algebras.
Consider an algebra $A$ equipped with a valuation $v$
(with one-dimensional leaves) and with values in { a totally ordered free abelian group $\Gamma$ of finite rank.}

\begin{Def}[SAGBI basis for an arbitrary algebra and a valuation] \label{def-SAGBI-valuation}
A collection $f_1, \ldots, f_t \in A \setminus \{0\}$ is a {\it SAGBI basis}
with respect to a valuation $v$ if:
\begin{enumerate}
\item $v(f_1) \ldots, v(f_t)$ generate the semigroup $S(A, v) = \{v(f) \mid f \in A \setminus \{0\} \}$.
\item For any $h \in A \setminus \{0\}$ the subduction algorithm terminates.
\end{enumerate}
\end{Def}
Note that the existence of a SAGBI basis (i.e. termination of the subduction algorithm) in particular implies that $A$ is a 
finitely generated algebra.

In the next section we will establish the existence of SAGBI bases for rings of sections of
very ample $G$-linearized line bundles (equivalently homogeneous coordinate rings)
of flag and spherical varieties with respect to certain natural valuations. In the rest of
this section we discuss generalities on SAGBI bases and toric degenerations.

{ First we show that for a valuation $\tilde{v}$ on a ring of sections as in Section \ref{sec-Newton-Ok-bodies}, 
the condition (2) in Definition \ref{def-SAGBI-valuation}
(i.e. termination of the subduction algorithm) is automatically satisfied.
Let $X$ be a projective variety with field of rational functions $\c(X)$. Let $v: \c(X) \setminus \{0\} \to \Gamma$ be a valuation with one-dimensional leaves and with values in 
a totally ordered free abelian group $\Gamma$ of finite rank. Let $L$ be a very ample line bundle on $X$ whose ring of sections we denote as usual by $R(L)$.
Recall from Section \ref{sec-Newton-Ok-bodies} that $v$ can be extended to a valuation $\tilde{v}: R(L) \setminus \{0\} \to \z_{\geq 0} \times \Gamma$. 
Let $R$ be a graded subalgebra of $R(L)$.
}

\begin{Prop} \label{prop-SAGBI-graded-algebra}
Suppose $f_1, \ldots, f_t \in R \setminus \{0\}$ { are such that}
$\tilde{v}(f_1), \ldots, \tilde{v}(f_t)$ generate the semigroup
$$S(R, \tilde{v}) = \bigcup_{k > 0} \{(k, v(f)) \mid f \in R_k \setminus \{0\}\}.$$
Then the subduction algorithm for the $f_i$ terminates in a finite number of steps.
\end{Prop}
\begin{proof} 
{ First note that in the value semigroup $S = S(R, \tilde{v})$ every increasing sequence of elements stops. Suppose $(m_1, u_1) \leq (m_2, u_2) \leq \cdots$ is a sequence in $S$.
Then from the definition of $\leq$ on $\n \times \Gamma$ (see \eqref{equ-ordering-N-times-Z^n})  we have $m_1 \geq m_2 \geq \cdots$. Since $\n$ is well-ordered and each $S_i$ is finite we see that 
for $i$ sufficiently large we should have $(m_i, u_i) = (m_{i+1}, u_{i+1}) = \cdots$ as claimed. Now take $0 \neq h \in R$.
Write $\tilde{v}(h) = \sum_{i=1}^t k_i \tilde{v}(f_i)$. Since $v$ (and hence $\tilde{v}$) have one-dimensional
 leaves, there exists a nonzero constant $c \in \c$ such that $g_1 := h - c f_1^{k_1} \cdots f_t^{k_t}$
has the property that either $\tilde{v}(g_1) > \tilde{v}(h)$ or $g_1 = 0$. If $g_1=0$ we are done. Otherwise 
repeating the same argument for $g_1$ in place of $h$ and continuing we obtain a sequence $g_1, g_2, \ldots$ of elements of $R$ 
with $\tilde{v}(h) < \tilde{v}(g_1) < \tilde{v}(g_2) < \cdots$. As showed above an increasing sequence in $S$ should stop. Hence at some point we have
$g_i = 0$ which finishes the proof.}
\end{proof}

Let $(k, a) \in S(R, \tilde{v})$ and let $$\F_{(k, a)}= \{ f \in R \mid \tilde{v}(f) \geq (k, a) \textup{ or } f=0 \}.$$
It is straightforward to verify that the subspaces $\F_{(k, a)}$ form a decreasing filtration in $R$, i.e.
\begin{enumerate}
\item If $(k, a) < (\ell, b)$ then $\F_{(\ell,b)} \subset \F_{(k, a)}$.
\item For any $(k, a), (\ell, b) \in \z_{\geq 0} \times \Gamma$ we have
$$\F_{(k, a)} \F_{(\ell, b)} \subset \F_{(k+\ell, a+b)}.$$
\end{enumerate}
We denote the graded of the filtration $\F_{\bullet}$ by $\gr R$.

As in \cite[Proposition 5.1]{Anderson}, (see also \cite[Section 5.6]{KKh-affine}, \cite[Section 2]{AB} and \cite{Caldero}) 
one proves that the { algebra $R$ degenerates to $\gr R$:}
\begin{Th}[Degeneration of graded algebras] \label{th-graded-alg-degen}
Let $R = \bigoplus_{\geq 0} R_k$ be a graded algebra as above. Also assume that $S(R, \tilde{v})$ is
finitely generated. Then there is a finitely generated, graded, flat $\c[t]$-subalgebra $\mathcal{R} \subset R[t]$,
such that
\begin{enumerate}
\item $\mathcal{R}/t\mathcal{R} \cong  \gr R$, and
\item $\mathcal{R}[t^{-1}] \cong R[t, t^{-1}]$ as $\c[t, t^{-1}]$-algebras.
\end{enumerate}
\end{Th}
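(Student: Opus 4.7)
The plan is the classical Rees algebra construction, adapted to the fact that our filtration on $A$ is indexed by $\z \times \Gamma$ rather than by $\z$. First I would exploit the finite generation hypothesis: pick generators $\mathbf{s}_1, \ldots, \mathbf{s}_r$ of $S = S(A,\tilde v)$ and lifts $f_i \in A$ with $\tilde v(f_i) = \mathbf{s}_i$. By Proposition \ref{prop-SAGBI-graded-algebra} the $f_i$ form a SAGBI basis, and in particular the subduction algorithm shows that they generate $A$ as a $\c$-algebra. To convert $\tilde v$ into a single $\z$-valued weighting I would choose a $\z$-linear form $\phi$ on the sublattice of $\z \times \Gamma$ spanned by $\mathbf{s}_1, \ldots, \mathbf{s}_r$ satisfying $\phi(\mathbf{s}_i) > 0$ for every $i$; such a $\phi$ exists because all generators lie on one side of the $\tilde v$-order. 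Additivity of $\phi$ then forces $\phi(\mathbf{s}) > 0$ for every nonzero $\mathbf{s} \in S$.

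Next I would define the decreasing $\z$-indexed filtration
\[
G_n \;=\; \{\, f \in A : \phi(\tilde v(f)) \geq n \,\} \cup \{0\}, \qquad n \in \z,
\]
which satisfies $G_n G_m \subset G_{n+m}$, $\bigcap_n G_n = 0$, and $G_n = A$ for $n \ll 0$, and form the Rees algebra
\[
\mathcal{A} \;=\; \bigoplus_{n \in \z} G_n \cdot t^n \;\subset\; A[t,t^{-1}].
\]
Standard arguments then give that $\mathcal{A}$ is a $\z$-graded $\c[t]$-subalgebra, that it is finitely generated (by $t$ together with the elements $f_i t^{\phi(\mathbf{s}_i)}$), and that it is flat over $\c[t]$ because it is $t$-torsion-free as a submodule of $A[t,t^{-1}]$.

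It remains to identify the two fibers. Inverting $t$ is routine: every $f \in A$ lies in some $G_n$, so $\mathcal{A}[t^{-1}] = A[t,t^{-1}]$, which gives (2). For (1), I would show that $\mathcal{A}/t\mathcal{A} = \bigoplus_n G_n/G_{n+1} = \gr_\phi A$ is isomorphic to the semigroup algebra $\c[S]$, which the preceding proposition identifies with $\gr A$. Concretely, for each $\mathbf{s} \in S$ with $\phi(\mathbf{s}) = n$ I would pick $e_\mathbf{s} \in A$ with $\tilde v(e_\mathbf{s}) = \mathbf{s}$ and check, using the one-dimensional leaves property of $\tilde v$ together with the domination rule for valuations (distinct $\tilde v$-values imply $\tilde v(f+g) = \min(\tilde v(f),\tilde v(g))$), that the classes $[e_\mathbf{s}]$ form a basis of $G_n/G_{n+1}$; finiteness of $S \cap \phi^{-1}(n)$ follows from $\phi$ being positive on all generators. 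Additivity of both $\phi$ and $\tilde v$ then matches the multiplication in $\gr_\phi A$ with that of $\c[S]$.

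The main obstacle is this last identification $\gr_\phi A \cong \c[S]$: \emph{a priori} collapsing a $\z \times \Gamma$-grading to a single $\z$-grading could destroy information, causing $\gr_\phi A$ to be a proper quotient or deformation of $\c[S]$. The argument above succeeds only because the one-dimensional leaves hypothesis, combined with the positivity of $\phi$ on $S \setminus \{0\}$, keeps the leaves indexed by different $\mathbf{s}$ in the same $\phi$-slice linearly independent in $G_n/G_{n+1}$. Everything else — finite generation of $\mathcal{A}$, flatness, and the generic fiber — is formal once the filtration $G_\bullet$ has been set up.
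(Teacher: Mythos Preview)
The paper does not actually prove this theorem: it simply states it and refers to \cite[Proposition~5.1]{Anderson} (and \cite{Caldero}, \cite{Brion-Alexeev}) for the argument. Your outline is precisely the standard Rees-algebra construction used in those references --- choose a linear form $\phi$ positive on the finitely many semigroup generators, collapse the $(\z\times\Gamma)$-filtration to a $\z$-filtration, and take the Rees algebra --- so in spirit your approach matches the intended one.

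There is, however, a bookkeeping slip in your Rees algebra. With the \emph{decreasing} filtration $G_n=\{f:\phi(\tilde v(f))\geq n\}$ and $\mathcal A=\bigoplus_{n\in\z}G_n t^n$, the element $t$ does \emph{not} lie in $\mathcal A$: that would require $1\in G_1$, i.e.\ $\phi(0)\geq 1$. What does lie in $\mathcal A$ is $t^{-1}\in G_{-1}t^{-1}=At^{-1}$, and your algebra is naturally a $\c[t^{-1}]$-module sitting inside $A[t,t^{-1}]$, not inside $A[t]$ as the statement demands. The identifications you want then hold with $t^{-1}$ in place of $t$: one gets $\mathcal A/t^{-1}\mathcal A\cong\bigoplus_n G_n/G_{n+1}=\gr_\phi A$ and $\mathcal A[t]=A[t,t^{-1}]$. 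The cleanest fix, and the one used in \cite{Anderson}, is to work instead with the \emph{increasing} filtration $F_n=\{f:\phi(\tilde v(f))\leq n\}\cup\{0\}$ and set $\mathcal A=\bigoplus_{n\geq 0}F_n t^n\subset A[t]$; then $t\in F_1 t$, the generators are $t$ together with $f_i t^{\phi(\mathbf s_i)}$ exactly as you wrote, and both fiber identifications go through verbatim. With this correction your argument is complete; the identification $\gr_\phi A\cong \c[S]$ via one-dimensional leaves is handled correctly.
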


As above let us assume that $S(R, \tilde{v})$ is a finitely generated semigroup. 
One shows the following (see \cite[Section 5]{Anderson}):
\begin{Prop} \label{prop-grA-semigp-algebra}
The graded algebra $\gr R$ coincides with the semigroup algebra of the semigroup $S = S(R, \tilde{v})$.
\end{Prop}

From Theorem \ref{th-graded-alg-degen} and Proposition \ref{prop-grA-semigp-algebra} one then obtains toric degenerations. 
Let $S \subset \z_{\geq 0} \times \z^d$ be a finitely generated semigroup. Its semigroup algebra $\c[S]$ is a finitely generated $\z_{\geq 0} 
\times \z^d$ graded algebra. Thus $X_0 = \Proj(\c[S])$ is a (not necessarily normal) projective toric variety. As in Section \ref{sec-Newton-Ok-bodies} let $C(S)$ be the convex cone generated 
by $S$, and $\Delta(S) = C(S) \cap (\{1\} \times \r^d)$. Since $S$ is finitely generated $\Delta(S)$ is a rational convex polytope. The normalization of $X_0$ is the toric variety corresponding to the rational polytope $\Delta(S)$.
The toric variety $X_0$ is normal if $C(S)$ is generated by the vectors in $S_1 = S \cap (\{1\} \times \z^d)$, and moreover $S_1$ consists of all the integral points in $\Delta(S)$. In this case, the fan of $X_0$ is the normal fan of the
rational convex polytope $\Delta(S)$.  

{ We state the following corollary of Theorem \ref{th-graded-alg-degen}:}
\begin{Cor} \label{cor-toric-degen}
With notation as before, suppose the semigroup $S = S(R(L), \tilde{v})$ is finitely generated. Then the variety $X$ can be degenerated to the (not necessarily normal) toric variety $X_0$ corresponding to 
the semigroup $S$. More precisely, there is a family of irreducible varieties $\pi: \mathfrak{X} \to \c$ such that:
\begin{enumerate}
\item $\pi$ is trivial on $\c \setminus \{0\}$ with  fibre isomorphic to $X$.
\item The fibre $\pi^{-1}(0)$ is isomorphic to the toric variety $X_0$. The normalization of $X_0$ 
is the toric variety corresponding to the rational convex polytope
$\Delta(R)$.
\end{enumerate}
\end{Cor}
The above corollary is proved in \cite[Section 5]{Anderson} (and also was observed in \cite[Section 5.6]{KKh-affine}).

\section{Spherical varieties} \label{sec-spherical}
As usual let $G$ be a connected reductive algebraic group over $\c$. 
A $G$-variety is a variety equipped with an algebraic action of $G$. { A very interesting class of $G$-varieties consists of spherical varieties.}
A normal $G$-variety $X$ is called {\it spherical} if a Borel subgroup $B$ of
$G$ has a dense open orbit. If $X$ is spherical then for any $G$-linearized line bundle
$L$, the space of sections $H^0(X, L)$ is a multiplicity-free $G$-module.
Class of spherical varieties includes flag varieties and toric varieties (when $G = T$ is a torus).

Let $X$ be a projective spherical variety of dimension $d$ and $L$ a very ample
$G$-linearized line bundle on $X$. 
Generalizing the cases of toric and flag varieties, to $(X, L)$ one associates a {\it moment polytope} $\Delta_{mom}(X, L)$ as
well as a {\it string polytope} $\Delta_\S(X, L)$, lying over $\Delta_{mom}(X, L)$,
such that the degree of the line bundle $L$ is equal to $d!\Vol_d(\Delta_\S(X, L))$ (see
\cite{Ok-spherical, AB, KKh-Arnold}). The construction of $\Delta_{\S}(X, L)$ depends on the choice of a reduced decomposition $\S$.

{ In this section we show that the polytopes $\Delta_{mom}(X, L)$ and 
$\Delta_\S(X, L)$ can be realized as Newton-Okounkov bodies for $X$ with respect to certain natural choices of valuations.}
Moreover, we show that the ring of sections $R(L)$ (equivalently the homogeneous
coordinate ring of $X$) has a SAGBI basis. This then implies
the existence of toric degenerations for $X$, recovering toric degeneration results in \cite{Caldero}, 
\cite{AB} and \cite{Kaveh-Michigan}.

{ There would not be much added difficulty if instead of the ring of sections $R(L)$ 
we work with the more general setting of grade $G$-linear series, i.e. graded $G$-invariant subalgebras of $R(L)$.
Thus, for the most part, we state the definitions and results in this more general setting.}

{ As in Section \ref{sec-Newton-Ok-bodies}
let $R(L) = \bigoplus_{k \geq 0} H^0(X, L^{\otimes k})$ denote the ring of sections associated to $(X, L)$. 
 Let $R = \bigoplus_{k \geq 0} R_k$ be a graded $G$-invariant subalgebra of $R(L)$. Note that $R$ is not necessarily finitely generated. 
Examples of $R$ include in particular, rings of sections of arbitrary $G$-linearized line bundles on $X$.}
For simplicity we assume that $R_k \neq \{0\}$ for all sufficiently large $k$.
Let us write
$$R = \bigoplus_{k \geq 0} \bigoplus_{\lambda \in \Lambda^+} R_{k, \lambda},$$
where $R_{k,\lambda}$ is the $\lambda$-isotypic component of $R_k$, i.e. the
sum of all the copies of the irreducible representation $V_\lambda$ in $R_k$.
As $X$ is spherical, $R_{k, \lambda} = V_\lambda$ or $\{0\}$, for any $(k, \lambda)$.
Consider { the convex body:}
$$ \Delta_{mom}(R) = \overline{\conv(\bigcup_{k > 0}\{\lambda/k \mid R_{k,\lambda} \neq \{0\}\})}.$$
{ It is called the {\it moment convex body} of $R$ (see \cite{KKh-Arnold}).
We denote the body $\Delta_{mom}(R(L))$ associated to the whole ring of sections $R(L)$ by $\Delta_{mom}(X, L)$.

Now consider the case where $R$ is finitely generated. Recall that in this case 
the subalgebra $R^U$ of $U$-invariants is also finitely generated. Let $\{f_1, \ldots, f_s\}$ be a set of homogeneous generators for $R^U$ 
consisting of highest weight vectors.  For each $i$, let $f_i$ be of degree $m_i$ with highest weight $\lambda_i$. Then one can see that $\Delta_{mom}(R)$ is 
just the convex hull of the points $\lambda_i/m_i$ and hence is a rational polytope (see proof of Proposition \ref{prop-S-v_wt}(2)). In particular, because $L$ is very ample, $R(L)$ is finitely generated and 
thus $\Delta_{mom}(X, L)$ 
is a polytope. It is usually called the {\it moment polytope} of $(X, L)$.} It can be identified (after sending $\lambda$ 
to $\lambda^* = -w_0 \lambda$) with the image of the moment map of $X$ (regarded as a Hamiltonian space for the action of a maximal
compact subgroup of $G$) intersected with the positive Weyl chamber $\Lambda^+_\r$.

The { convex body} $\Delta_\S(R)$ is defined as:
$$\Delta_\S(R) = \bigcup_{\lambda \in \Delta_{mom}(R)}
(\lambda, \Delta_\S(\lambda)).$$ That is, the polytope $\Delta_\S(R)$ is the polytope fibered over the moment convex body $\Delta_{mom}(R)$ with
the fibre over a point $\lambda \in \Delta_{mom}(R)$ being the string polytope
$\Delta_\S(\lambda)$ (note that $\Delta_\S(\lambda)$ is defined for
any $\lambda \in \Lambda^+_\r$). 
{ In other words, if $p: \C_\S \to \Lambda^+_\r$ denotes the projection on the first factor ($\C_\S \subset \Lambda^+_\r \times \r^N$), then 
$\Delta_\S(R) = p^{-1}(\Delta_{mom}(R))$. The convex body $\Delta_\S(R)$ is called the {\it string convex body} of $R$ (\cite{KKh-Arnold}).
We denote $\Delta_\S(R(L))$ associated to the whole ring of sections by $\Delta_\S(X, L)$. 

Recall from above that if $R$ is a finitely generated algebra then $\Delta_{mom}(R)$ is a rational convex polytope. In this case, since $p: \C_\S \to \Lambda^+_\r$ is proper and $\C_\S$ is a rational convex
polyhedral cone (Theorem \ref{th-Littelmann}), we see that $\Delta_\S(R)$ is also a rational convex polytope. In particular, $\Delta_\S(X, L)$ is a polytope called the 
{\it string polytope} of $(X, L)$.}

{ Below we will see that} the convex bodies $\Delta_{mom}(R)$ and $\Delta_\S(R)$ can be
realized as Newton-Okounkov bodies for $X$ with respect to natural valuations on the ring $R(L)$.

Consider the total order on the weight lattice $\Lambda$ as in Section \ref{sec-multi-prop}, corresponding to an ordering of simple roots.
As usual extend this ordering to $\z \times \Lambda$
by defining $(k, \lambda) > (\ell, \gamma)$ if
$k < \ell$, or $k=\ell$ and $\lambda > \gamma$.
Let $f \in R$ and write $f = \sum_{(k, \gamma)} f_{k, \gamma}$, where $f_{k,\gamma} \in R_{k,\gamma}$.

\begin{Def} \label{def-weight-valuation}
Define $$\tilde{v}_{\wt}(f) = \min\{ (k, \lambda) \mid f_{k, \lambda} \neq 0\}.$$
We will refer to $\tilde{v}_\wt$ as the {\it weight valuation}.
\end{Def}

\begin{Prop} \label{Prop-weight-valuation}
The map $\tilde{v}_{\wt}$ is a valuation on $R$ with values in $\z_{\geq 0} \times \Lambda^+$
(which may not have one-dimensional leaves).
\end{Prop}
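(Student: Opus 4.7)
The plan is to check the two pre-valuation axioms of Definition \ref{def-pre-valuation} and then multiplicativity as in Definition \ref{def-valuation}. Because $\tilde{v}_{\wt}$ is defined using a maximum rather than a minimum, I view it as a valuation with respect to the reverse of the order on $\z \times \Lambda$, so that the axioms take their standard form. Axiom (i) is immediate from the direct sum decomposition $R = \bigoplus_{(k,\gamma)} R_{k,\gamma}$, since the nonzero isotypic components of $f+g$ form a subset of those of $f$ together with those of $g$. Axiom (ii) is obvious. Everything thus reduces to proving $\tilde{v}_{\wt}(fg) = \tilde{v}_{\wt}(f) + \tilde{v}_{\wt}(g)$.

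Set $(k_1, \lambda_1) = \tilde{v}_{\wt}(f)$ and $(k_2, \lambda_2) = \tilde{v}_{\wt}(g)$, and split $f = f_{k_1, \lambda_1} + f'$, $g = g_{k_2, \lambda_2} + g'$, where the primed pieces are supported on pairs strictly below the top. Expanding $fg$ produces four summands. For each of the three cross terms $f_{k_1, \lambda_1} g'$, $f' g_{k_2, \lambda_2}$, $f' g'$, every elementary product $f_{a, \alpha} g_{b, \beta}$ that occurs satisfies $(a, \alpha) \leq (k_1, \lambda_1)$ and $(b, \beta) \leq (k_2, \lambda_2)$ with at least one inequality strict. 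By $G$-equivariance of multiplication such a product decomposes in $R_{a+b}$ into isotypics $V_\mu^*$ where $\mu$ is a weight of $V_\alpha^* \otimes V_\beta^*$, so $\mu \leq \alpha + \beta$ in the dominance order. Since the total order on $\Lambda$ is chosen to respect addition and to refine dominance, this gives $(a+b, \mu) < (k_1+k_2, \lambda_1 + \lambda_2)$ for every such contribution. Hence the three cross terms contribute nothing to the slot $(k_1+k_2, \lambda_1 + \lambda_2)$.

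It remains to analyze the main term $f_{k_1, \lambda_1} g_{k_2, \lambda_2}$. By the tensor-product decomposition $V_{\lambda_1}^* \otimes V_{\lambda_2}^* \cong V_{\lambda_1 + \lambda_2}^* \oplus \bigoplus V_\mu^*$, with every remaining $\mu$ strictly less than $\lambda_1 + \lambda_2$ in dominance, the only possible contribution of this product to the slot $(k_1+k_2, \lambda_1+\lambda_2)$ comes from the Cartan summand $V_{\lambda_1+\lambda_2}^*$. The entire proof therefore reduces to showing that this Cartan summand is not sent to zero by the multiplication map $R_{k_1} \otimes R_{k_2} \to R_{k_1+k_2}$. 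This is the main obstacle, and it is precisely here that the hypothesis that $X$ is spherical must be used.

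To exhibit a nonzero element in the Cartan image, let $\tau_i \in R_{k_i, \lambda_i} \cong V_{\lambda_i}^*$ be a $B^-$-eigenvector of weight $-\lambda_i$, i.e., a lowest-weight vector; it is nonzero since $R_{k_i, \lambda_i} \neq 0$ by hypothesis. Because $X$ is spherical, the zero locus of any nonzero $B^-$-semi-invariant section is a proper $B^-$-stable closed subvariety of $X$ and hence is disjoint from the open $B^-$-orbit. Consequently $\tau_1$ and $\tau_2$ do not vanish on the open orbit, and neither does their product $\tau_1 \tau_2$, which is therefore nonzero in $R_{k_1+k_2}$. Being a $B^-$-eigenvector of weight $-(\lambda_1+\lambda_2)$, $\tau_1 \tau_2$ must land in the unique isotypic copy of $V_{\lambda_1+\lambda_2}^*$ inside $R_{k_1+k_2}$; this forces $R_{k_1+k_2, \lambda_1+\lambda_2} \neq 0$ and the Cartan multiplication into it to be nontrivial. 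Combined with the cross-term analysis this gives $\tilde{v}_{\wt}(fg) = (k_1+k_2, \lambda_1+\lambda_2)$, completing the proof.
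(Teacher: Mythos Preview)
Your cross-term analysis is fine in spirit (modulo the unproved assertion that the chosen lexicographic order on $\Lambda$ refines the dominance order; the paper is equally casual about this). The real gap is in the main term.

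You correctly observe that the $(k_1+k_2,\lambda_1+\lambda_2)$-component of $f_{k_1,\lambda_1}\,g_{k_2,\lambda_2}$ can only come from the Cartan summand $V_{\lambda_1+\lambda_2}^*\subset V_{\lambda_1}^*\otimes V_{\lambda_2}^*$. But you then claim the proof ``reduces to showing that this Cartan summand is not sent to zero by the multiplication map.'' That is not the right reduction. What you need is that the Cartan \emph{projection} of the specific pure tensor $f_{k_1,\lambda_1}\otimes g_{k_2,\lambda_2}$ is nonzero, together with injectivity of multiplication on the Cartan summand. Your $\tau_1,\tau_2$ argument does establish the latter (via Schur), but it says nothing about the former: your witnesses $\tau_1\otimes\tau_2$ are lowest weight vectors, which already lie in the Cartan summand, so the argument never touches a general pure tensor. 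A priori $f_{k_1,\lambda_1}\otimes g_{k_2,\lambda_2}$ could lie entirely in the complement $W=\bigoplus_{\mu<\lambda_1+\lambda_2}V_\mu^*$, in which case its product would land in $\bigoplus_{\mu<\lambda_1+\lambda_2}R_{k_1+k_2,\mu}$ and $\tilde v_{\wt}(fg)$ would be strictly below $(k_1+k_2,\lambda_1+\lambda_2)$.

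The missing ingredient is exactly the paper's Lemma~\ref{lem-pure-tensor}: the complement of $V_{\lambda+\mu}$ in $V_\lambda\otimes V_\mu$ contains no nonzero pure tensors. The paper proves this by a short geometric argument (the Segre variety meets the complement in a closed $G$-stable set, which if nonempty would contain a closed orbit and hence a highest weight line, forcing a copy of $V_{\lambda+\mu}$ into the complement). Once you have that lemma, your $\tau_i$ argument is actually superfluous: the commutative diagram in the paper, combined with the lemma and the fact that $R$ is a domain, gives the nonvanishing of $h_{\lambda_1+\lambda_2}$ directly.
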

\begin{proof}
It is straightforward to check that $\tilde{v}_{\wt}$ is a pre-valuation (Definition \ref{def-pre-valuation}).
So we are required only to prove that for all $0 \neq f, g \in R(L)$,
$\tilde{v}_{\wt}(fg) = \tilde{v}_{\wt}(f) + \tilde{v}_{\wt}(g)$.
We need the following lemma.
Let $\lambda, \mu$ be dominant weights,
one knows that $V_\lambda \otimes V_\mu$ contains
$V_{\lambda+\mu}$ with multiplicity $1$.
\begin{Lem} \label{lem-pure-tensor}
The $G$-module complement of $V_{\lambda+\mu}$ in $V_\lambda \otimes V_\mu$ contains
no pure tensors, i.e. it contains no elements of the form $f \otimes g$, $f \in V_\lambda,~ g\in V_\mu$.
\end{Lem}
\begin{proof}
Let $C$ be the complement of $V_{\lambda+\mu}$ in $V_\lambda \otimes V_\mu$
and let $P$ be the set of all pure tensors in $V_\lambda \otimes V_\mu$. Both $P$ and $C$
are closed, $G$-invariant and closed under scalar multiplication and { hence the same} is true
for $P \cap C$. Thus the image of $P \cap C$ in $\p(V_\lambda \otimes V_\mu)$ is a
projective $G$-subvariety and hence contains a closed $G$-orbit which necessarily is isomorphic
to a (partial) flag variety. It follows that $P \cap C$ should contain a highest weight vector.
But the only highest weight vectors in $P$ are of the form $v_\lambda \otimes v_\mu$ which is a
highest weight vector with highest weight $\lambda + \mu$. { This shows that} $C$ contains a
copy of $V_{\lambda+\mu}$ which is not possible and hence $C \cap P = \{0\}$.
\end{proof}
Let $k, \ell \geq 0$ be integers. For dominant weights $\lambda, \mu$ let
$0 \neq f \in R_{k, \lambda}$ and $0 \neq g \in R_{\ell, \mu}$. From the definition
$\tilde{v}_{\wt}(f) = (k, \lambda)$ and $\tilde{v}(g) = (\ell, \mu)$.
It is enough to show
$\tilde{v}_{\wt}(fg) = (k+\ell, \lambda+\mu)$.
Let us write $fg = h = \sum_{(s, \gamma)} h_{\gamma}$ where $h_{\gamma} \in R_{k+\ell, \gamma}$.
{One knows that if $h_{\gamma} \neq 0$ then
$\gamma \geq \lambda + \mu$. Thus we need only to prove that $h_{\lambda+\mu} \neq 0$.}
 Since $X$ is spherical we can identify $R_{k, \lambda}$
(respectively $R_{\ell, \mu}$) with $V_{\lambda}$ (respectively $V_{\mu}$). Let
$w_{\lambda} \in V_{\lambda}$, $w_{\mu} \in V_{\mu}$ be the images of $f$, $g$ respectively. We have a commutative
diagram:
$$
\xymatrix{V_{\lambda} \times V_{\mu} \ar[d]^{\cong}
\ar[r] & V_{\lambda} \otimes V_{\mu} \ar[d]^p \\ R_{k, \lambda} \times R_{\ell, \mu} \ar[r] & R_{k+\ell}
}
$$
where the lower horizontal arrow is multiplication in $R(L)$ and
$p$ is the natural projection from the tensor product to
$R_{k, \lambda}R_{\ell, \mu} \subset R_{k+\ell}$. Now by Lemma \ref{lem-pure-tensor}
the vector $w_{\lambda} \otimes w_{\mu} \in V_{\lambda} \otimes V_{\mu}$ is not
contained in the complement of $V_{\lambda+\mu}$, and hence $p(w_\lambda \otimes w_\mu)$ is not
zero. Since the diagram is commutative, this implies that the component $h_{\lambda+\mu}$ is nonzero. The proof is finished.
\end{proof}

\begin{Prop}  \label{prop-S-v_wt} ~
\begin{enumerate}
\item The value semigroup $S(R, \tilde{v}_{\wt})$ is the weight semigroup of $R$,
that is:
$$S(R, \tilde{v}_\wt) = \{(k, \lambda) \mid R_{k, \lambda} \neq \{0\}\}.$$
{ Moreover, when $R=R(L)$ is the whole ring of sections we have:
$$S(R(L), \tilde{v}_{\wt}) = \{(k, \lambda) \mid \lambda \in k\Delta_{mom}(X, L) \cap \Lambda^+\}.$$
That is, $S(R(L), \tilde{v}_\wt)$ is the semigroup of all the
integral points in the cone over the moment polytope $\Delta_{mom}(X,L)$.}
\item If $R$ is finitely generated then the semigroup $S(R, \tilde{v}_\wt)$ { is finitely generated}.
\end{enumerate}
\end{Prop}
\begin{proof}
{ The first claim in (1) follows from the definition of $\tilde{v}_\wt$.
For the second claim note that normality of $X$ implies that $R$ and hence $R^U$ are integrally closed which in turn readily implies the claim.
To prove part (2) we observe that the semigroup $S(R, \tilde{v}_{\wt})$
coincides with the weight semigroup $S = \{(k, \lambda) \mid R^U_{k, \lambda} \neq \{0\}\}$ of the graded $T$-algebra $R^U$.
Since $R$ is assumed to be finitely generated then $R^U$ is also finitely generated. 
Let $\{f_1, \ldots, f_s\}$ be a set of generators for the $T$-algebra $R^U$ consisting of homogeneous 
weight vectors. Let $m_i$ and $\lambda_i$ be the degree and weight of the $f_i$ respectively.
It is easy to verify that the $(m_i, \lambda_i)$ generate $S$ as a semigroup which finishes the proof.
}
\end{proof}

From Proposition \ref{prop-S-v_wt} we readily obtain:
\begin{Cor} \label{cor-moment-polytope-Newton-Ok-polytope}
{ The convex body $\Delta_{mom}(R)$ coincides with the Newton-Okounkov body
$\Delta(R, \tilde{v}_{\wt})$. In particular, $\Delta_{mom}(X, L) = \Delta(R(L), \tilde{v}_{\wt})$.}
\end{Cor}

\begin{Rem}
The valuation $\tilde{v}_{\wt}$ on $R$ gives a valuation $v_{\wt}$ on $\c(X)$ with values in
$\Lambda^+$ as follows. Let $u \in \c(X)$. Since $L$ is very ample, one can find $f_1, f_2
\in H^0(X, L^{\otimes k})$, for some $k$, such that $u = f_1 / f_2$. Let
$\tilde{v}_{\wt}(f_i) = (k, \lambda_i)$, $i=1,2$. Define $v_{\wt}(u) = \lambda_1 - \lambda_2$.
As $\tilde{v}_{\wt}$ is a valuation on $R$, one verifies that $v_{\wt}$ is well-defined and
is a valuation on $\c(X)$ with values in $\Lambda$.
\end{Rem}


Finally we extend $\tilde{v}_{\wt}$ to a valuation with one-dimensional leaves. Fix a
reduced decomposition for the longest element $\S = (\alpha_{i_1}, \ldots, \alpha_{i_N})$,
$w_0 = s_{\alpha_{i_1}} \cdots s_{\alpha_{i_N}}.$ Recall that $v_\S$ denotes the highest term valuation on
the field $\c(G/B)$ constructed from a coordinate system { on the Bott-Samelson variety} associated to the reduced decomposition $\S$ 
(Section \ref{sec-Bott-Samelson}).

In each $R_{k, \lambda} \neq \{0\}$ choose a highest weight vector $h_{k, \lambda}$. The map $f \mapsto f/h_{k, \lambda}$ identifies
$R_{k, \lambda}$ with a subspace of $\c(G/B)$. By abuse of notation
let $v_\S$ denote the valuation on $R_{k, \lambda} \cong V_\lambda$ obtained by restricting $v_\S$ to the image of $R_{k, \lambda}$ 
in $\c(G/B)$. 
Note that since { the value of a valuation does not change} 
under scalar multiplication, the valuation $v_\S$ on $R_{k, \lambda}$ is independent of the choice of the highest weight vector 
$h_{k, \lambda}$.

Let $f \in R$ and write $f = \sum_{(k, \gamma)} f_{k, \gamma}$ with
$f_{k, \gamma} \in R_{k, \gamma}$. Let $\tilde{v}_{\wt}(f) = (s, \lambda)$.
\begin{Def} \label{def-tilde-v-S-valuation}
Define the valuation $\tilde{v}_\S$ on $R$ with values in $\z_{\geq 0} \times \Lambda^+ \times \z_{\geq 0}^N$ by:
$$\tilde{v}_\S(f) = (s, \lambda, v_\S(f_{s, \lambda})).$$
\end{Def}

\begin{Prop} \label{prop-tilde-v-S-valuation}
$\tilde{v}_\S$ is a valuation on $R$ with one-dimensional leaves.
\end{Prop}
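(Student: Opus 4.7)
The plan is to build $\tilde{v}_\S$ as a two-stage refinement of $\tilde{v}_{\wt}$: the first two coordinates $(s,\lambda)$ are read off by $\tilde{v}_{\wt}$, and then within the isotypic piece $R_{s,\lambda}\cong V^*_\lambda$, the third coordinate is read off by the string/translated-Schubert valuation $v_\S$ on sections of $L_\lambda$. The pre-valuation axioms and the one-dimensional leaves property will then be almost formal, once multiplicativity is established. I expect that showing $\tilde v_\S(fg)=\tilde v_\S(f)+\tilde v_\S(g)$ in its third coordinate will be the main obstacle; it is the step that really uses Theorem \ref{th-main} together with the sphericity of $X$.

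First I would verify the pre-valuation axioms. That $\tilde{v}_\S(cf)=\tilde{v}_\S(f)$ is immediate. For $\tilde{v}_\S(f+g)\geq\min\{\tilde{v}_\S(f),\tilde{v}_\S(g)\}$, split into cases according to whether $\tilde{v}_{\wt}(f)$ equals or is different from $\tilde{v}_{\wt}(g)$: in the unequal case, Proposition \ref{Prop-weight-valuation} gives the conclusion already at the $(s,\lambda)$ level; in the equal case, the third-coordinate inequality is the pre-valuation property of $v_\S$ on $R_{s,\lambda}\cong V^*_\lambda$, which is inherited from the valuation $v_\S$ on $\c(G/B)$ via the inclusion $H^0(G/B,L_\lambda)\hookrightarrow \c(G/B)$ of Section \ref{sec-main}.

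For multiplicativity, let $\tilde{v}_{\wt}(f)=(s,\lambda)$ and $\tilde{v}_{\wt}(g)=(t,\mu)$, and let $f_{s,\lambda}$, $g_{t,\mu}$ be the top isotypic parts. The proof of Proposition \ref{Prop-weight-valuation} (via Lemma \ref{lem-pure-tensor}) already shows $\tilde{v}_{\wt}(fg)=(s+t,\lambda+\mu)$ and that $(fg)_{s+t,\lambda+\mu}$ is, up to a nonzero scalar, the image of $f_{s,\lambda}\otimes g_{t,\mu}$ under the Cartan projection $V^*_\lambda\otimes V^*_\mu\twoheadrightarrow V^*_{\lambda+\mu}$, since sphericity forces $R_{k,\nu}$ to be multiplicity-free and any $G$-equivariant nonzero map into $V^*_{\lambda+\mu}$ is unique up to scalar. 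So the claim reduces to
\[
v_\S\bigl((fg)_{s+t,\lambda+\mu}\bigr)=v_\S(f_{s,\lambda})+v_\S(g_{t,\mu}).
\]
Under the identification $R_{k,\nu}\cong V^*_\nu$, this Cartan multiplication is precisely the multiplication in the covariant algebra $\c[G]^U=\bigoplus_\nu V^*_\nu$. Hence the equality above is exactly the multiplicativity of ${\bf v}_\S$ on $\c[G/U]=\c[G]^U$ proved in Proposition \ref{prop-v-tilde-valuation}, whose third-coordinate statement is in turn the content of Theorem \ref{th-main} together with the multiplicativity of the dual canonical basis (Corollary \ref{cor-multi-prop-canonical-basis}). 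This is the step where the whole machinery of the paper is brought to bear, and it is the heart of the argument.

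Finally, for the one-dimensional leaves property, suppose $\tilde{v}_\S(f)=\tilde{v}_\S(g)=(s,\lambda,a)$. Then $\tilde{v}_{\wt}(f)=\tilde{v}_{\wt}(g)=(s,\lambda)$, so $f_{s,\lambda},g_{s,\lambda}\in R_{s,\lambda}\cong V^*_\lambda$ both have $v_\S$-value $a$. Since $v_\S$ on $V^*_\lambda$ has one-dimensional leaves (by Theorem \ref{th-main} together with Theorem \ref{th-Littelmann}, or directly since it arises from a flag of normal subvarieties as in Example \ref{ex-Parshin-valuation}), there exists $c\neq 0$ with $v_\S(g_{s,\lambda}-cf_{s,\lambda})>a$. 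Then $\tilde{v}_\S(g-cf)>(s,\lambda,a)$, as desired. Combined with the previous two steps this completes the proof that $\tilde{v}_\S$ is a valuation with one-dimensional leaves.
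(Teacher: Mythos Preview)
Your argument is correct and the treatment of one-dimensional leaves matches the paper's proof verbatim. The paper dismisses the valuation axioms as ``straight forward'' and proves only the one-dimensional leaves statement; you have supplied the details the paper omits, and your reduction of multiplicativity to the Cartan product on $\c[G/U]$ (via Lemma~\ref{lem-pure-tensor} and Proposition~\ref{prop-v-tilde-valuation}) is exactly the natural way to unpack that word.

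One small correction of attribution: the multiplicativity of ${\bf v}_\S$ in Proposition~\ref{prop-v-tilde-valuation}(1) does not rest on Theorem~\ref{th-main} or Corollary~\ref{cor-multi-prop-canonical-basis}. It comes directly from the fact that $v_\S$ is an honest valuation on $\c[U^-]\subset\c(G/B)$, and the embeddings $V^*_\lambda\hookrightarrow\c[U^-]$, $\sigma\mapsto\sigma/\tau_\lambda$, turn the Cartan product into ordinary function multiplication (since $\tau_{\lambda+\mu}=\tau_\lambda\tau_\mu$ up to scalar). Corollary~\ref{cor-multi-prop-canonical-basis} is a \emph{consequence} of this, not an input. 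So your parenthetical ``whose third-coordinate statement is in turn the content of Theorem~\ref{th-main} together with the multiplicativity of the dual canonical basis'' should simply be dropped; the citation of Proposition~\ref{prop-v-tilde-valuation} alone already closes the argument. This is only a mislabeling and does not affect the logic.
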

\begin{proof}
That $\tilde{v}_\S$ is a valuation is straightforward. It has one-dimensional leaves because of arguments similar to those 
in Section \ref{sec-multi-prop}: For $f, g \in R$ let $\tilde{v}_\S(f) = \tilde{v}_\S(g) = (s, \lambda, a)$.
Write $f = \sum_{(k, \gamma)} f_{k, \gamma}$ and $g = \sum_{(\ell, \mu)} g_{\ell, \mu}$ with
$f_{k, \gamma} \in R_{k, \gamma}$ and $g_{\ell, \mu} \in R_{\ell, \mu}$.
Then $a = v_\S(f_{s, \lambda}) = v_\S(g_{s, \lambda})$. Since $v_\S$ has one-dimensional leaves then
there is $c \in \c$ such that $v_\S(f_{s, \lambda} - cg_{s, \lambda}) > a$ or $f_{s,\lambda} - cg_{s, \lambda} = 0$.
It then easily follows that $\tilde{v}_\S(f - cg) > (s, \lambda, a)$ which proves the proposition.
\end{proof}

We then immediately obtain the following:
\begin{Cor} \label{cor-Newton-Okounkov-polytope-spherical}
{ The convex body $\Delta_\S(R)$ coincides with
the Newton-Okounkov body $\Delta(R, \tilde{v}_\S)$. In particular, $\Delta_\S(X, L) = \Delta(R(L), \tilde{v}_\S)$.}
\end{Cor}

\begin{Cor} \label{cor-SAGBI-basis-spherical}
The semigroup $S(R(L), \tilde{v}_\S)$ is finitely generated, and hence 
the ring of sections $R(L)$ has a SAGBI basis with respect to the
valuation $\tilde{v}_\S$.
\end{Cor}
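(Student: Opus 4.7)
The plan is to combine the structural description of the value semigroup $S(R(L), \tilde{v}_\S)$ from Theorem \ref{th-Newton-Ok-polytope-spherical} with the general SAGBI criterion for graded algebras in Proposition \ref{prop-SAGBI-graded-algebra}.

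First, I recall that by Theorem \ref{th-Newton-Ok-polytope-spherical}, the value semigroup $S = S(R(L), \tilde{v}_\S)$ equals the set of integral points in the convex rational polyhedral cone $\mathcal{C}_\S(X,L) \subset \r \times \Lambda_\r \times \r^N$. By Gordan's lemma, the semigroup of lattice points in a rational polyhedral cone is finitely generated, so I can choose elements $f_1, \ldots, f_t \in R(L) \setminus \{0\}$ such that $\tilde{v}_\S(f_1), \ldots, \tilde{v}_\S(f_t)$ generate $S$. This verifies condition (1) of Definition \ref{def-SAGBI-valuation}.

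Next, I need to verify the second condition, namely that the subduction algorithm terminates for every $h \in R(L) \setminus \{0\}$. This is exactly where Proposition \ref{prop-SAGBI-graded-algebra} applies: $R(L) = \bigoplus_{k \geq 0} H^0(X, L^{\otimes k})$ is a graded algebra whose graded pieces are finite-dimensional (since $L$ is a line bundle on a projective variety), and $\tilde{v}_\S$ has one-dimensional leaves by Proposition \ref{prop-tilde-v-S-valuation}. Under these hypotheses, Proposition \ref{prop-SAGBI-graded-algebra} guarantees that any collection of elements whose valuations generate $S(R(L), \tilde{v}_\S)$ is automatically a SAGBI basis, i.e.\ subduction terminates. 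Hence $\{f_1, \ldots, f_t\}$ is a SAGBI basis for $R(L)$ with respect to $\tilde{v}_\S$.

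There is no real obstacle here: all the substantive work has already been carried out. The deep inputs are (a) the identification of $\tilde{v}_\S$ with the string parametrization via Theorem \ref{th-main}, (b) Littelmann's rational polyhedrality of the string cone (Theorem \ref{th-Littelmann}), and (c) the description via Brion's theorem that $H^0(X, L^{\otimes k}) = \bigoplus_{\lambda \in k\mu(X,L) \cap \Lambda^+} V_\lambda^*$ for normal projective spherical $X$. These combine in Theorem \ref{th-Newton-Ok-polytope-spherical} to yield polyhedrality of $S$, which is the only thing one needs beyond the general machinery of Section \ref{sec-SAGBI} to extract the corollary. The proof is therefore essentially a one-line invocation of Theorem \ref{th-Newton-Ok-polytope-spherical}, Gordan's lemma, and Proposition \ref{prop-SAGBI-graded-algebra}.
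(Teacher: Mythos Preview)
Your proof is correct and matches the paper's own argument essentially line for line: the paper's proof simply says ``Follows from Theorem \ref{th-Newton-Ok-polytope-spherical} and Proposition \ref{prop-SAGBI-graded-algebra}. Note that the semigroup of all the integral points in a rational convex polyhedral cone is finitely generated (Gordon's lemma).'' Your write-up just unpacks these invocations with a bit more care (finite-dimensionality of graded pieces, one-dimensional leaves via Proposition \ref{prop-tilde-v-S-valuation}), which is entirely appropriate.
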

\begin{proof}
The weight semigroup $S = S(R(L), \tilde{v}_\wt)$ is finitely generated (Proposition \ref{prop-S-v_wt}) and 
the semigroup $S' = S(R(L), \tilde{v}_\S)$ projects onto the semigroup $S$. Also by Corollary \ref{cor-Newton-Okounkov-polytope-spherical} the cone of $S'$ is rational polyhedral. The claim now follows from Proposition \ref{prop-SAGBI-graded-algebra} and the next simple lemma which is a slight generalization of Gordon's Lemma.
\begin{Lem}
Let $\pi: \r^n \times \r^m \to \r^n$ denote the projection onto the first factor. Let $S \subset \z^n$ be a finitely generated semigroup
with cone $C$, i.e. $C$ is the convex hull of $S \cup \{0\}$. Let $C' \subset \r^n \times \r^m$ be a rational closed convex polyhedral cone 
which projects onto $C$ under $\pi$. 
Then the semigroup $\pi^{-1}(S) \cap C' \cap \z^{n+m}$ is finitely generated. 
\end{Lem}
\begin{proof}
Let $v_1, \ldots, v_r$ be rational generators for the cone $C' \subset \r^n \times \r^m$. Multiplying each $v_i$ with an appropriate positive integer, we can assume that $\pi(v_i) \in S$ for any $i$. Consider the set $K = \{ \sum_{i=1}^r \alpha_iv_i \mid 0 \leq \alpha_i \leq 1\}$. Clearly
$K$ is compact and hence $I = K \cap S'$ is finite. We claim that $I$ generates $S'$ as a semigroup. Note that $I$ contains $v_1, \ldots, v_r$. Take $x \in S'$. One can write $x = \sum_{i=1}^r \beta_i v_i$ with $\beta_i \in \q$ and $\beta_i \geq 0$. Then $x = x_1 + x_2$ where $x_1 = \sum_{i=1}^r [\beta_i] v_i$ and  $x_2 = \sum_{i=1}^r (\beta_i - [\beta_i])v_i$. Since the integer parts $[\beta_i]$ are all non-negative $x_1$ lies in the semigroup generated by the $v_i$, also for all $i$, $0 \leq \beta_i - [\beta_i] \leq 1$ and thus $x_2 \in I$. This shows that $x$ is in the semigroup generated by $I$ as required.
\end{proof}
\end{proof}


From the above recover the following known result (\cite{AB}, see also \cite{Kaveh-Michigan}):
\begin{Cor} \label{cor-toric-degen-spherical}
A projective spherical $G$-variety $X$ 
can be degenerated to the toric variety corresponding to the (rational) polytope $\Delta_\S(X, L)$.
\end{Cor}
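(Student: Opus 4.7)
The plan is to combine the SAGBI basis existence result (Corollary \ref{cor-SAGBI-basis-spherical}) with the general toric degeneration machinery established in Corollary \ref{cor-toric-degen}, using the explicit description of the value semigroup from Theorem \ref{th-Newton-Ok-polytope-spherical}.

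First I would verify the hypotheses of Corollary \ref{cor-toric-degen} for the pair $(R(L), \tilde{v}_\S)$. By Proposition \ref{prop-tilde-v-S-valuation} the valuation $\tilde{v}_\S$ on $R(L)$ has one-dimensional leaves and takes values in $\z_{\geq 0} \times \Lambda^+ \times \z_{\geq 0}^N$. By Theorem \ref{th-Newton-Ok-polytope-spherical}, the value semigroup $S = S(R(L), \tilde{v}_\S)$ is precisely the set of integral points in the rational polyhedral cone $\mathcal{C}_\S(X, L)$, which is the cone fibred over $\mu(X, L)$ with string polytope fibres. In particular $S$ consists of \emph{all} integral points of a rational polyhedral cone, and by Gordon's lemma $S$ is finitely generated. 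Corollary \ref{cor-Newton-Okounkov-polytope-spherical} further identifies the slice of $\mathcal{C}_\S(X, L)$ at level $1$ with the polytope $\Delta_\S(X, L)$.

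Next I would invoke Theorem \ref{th-graded-alg-degen}: there exists a finitely generated, graded, flat $\c[t]$-subalgebra $\mathcal{A} \subset R(L)[t]$ with $\mathcal{A}/t\mathcal{A} \cong \gr R(L)$ and $\mathcal{A}[t^{-1}] \cong R(L)[t, t^{-1}]$. By the proposition identifying $\gr A$ with the semigroup algebra of $S(A, \tilde{v})$, we have $\gr R(L) \cong \c[S] = \c[\mathcal{C}_\S(X, L) \cap (\z \times \Lambda \times \z^N)]$, which is the homogeneous coordinate ring of the projective toric variety associated to the polytope $\Delta_\S(X, L)$ (with the $\q$-divisor determined by the rational vertices of the slice). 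Setting $\mathfrak{X} = \Proj_{\c[t]}(\mathcal{A}) \to \Spec(\c[t])$ and letting $\mathfrak{L}$ be the divisorial sheaf corresponding to the grading, the flatness of $\mathcal{A}$ over $\c[t]$ gives a flat family whose generic fibre is $(X, L)$ (by the second property of $\mathcal{A}$) and whose special fibre at $t=0$ is the pair $(X_0, L_0)$ given by $\Proj(\gr R(L))$ together with the tautological polarization, i.e. the toric variety with $\q$-divisor associated to $\Delta_\S(X, L)$.

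The key ingredient making everything work is really the combination of Theorem \ref{th-Newton-Ok-polytope-spherical} (which says $S$ is the full set of lattice points in a rational polyhedral cone, not just a finitely generated subsemigroup of it) and Proposition \ref{prop-SAGBI-graded-algebra} (which guarantees that the subduction algorithm terminates so we actually have a SAGBI basis). The main potential obstacle is the technical verification that the Proj construction applied to the Rees-type algebra $\mathcal{A}$ does produce a genuine flat degeneration at the geometric level with the correct identification of the special fibre as the normal projective toric variety of $\Delta_\S(X, L)$ equipped with the appropriate $\q$-divisor coming from the (possibly non-integral) rational vertices; this is addressed in the references \cite{Anderson} and \cite{Askold-Kiumars-affine} cited just after Corollary \ref{cor-toric-degen}, so the conclusion follows directly by appealing to that corollary.
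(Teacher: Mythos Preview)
Your proposal is correct and follows essentially the same route as the paper: the paper's proof consists of the single sentence ``Follows directly from Theorem \ref{th-Newton-Ok-polytope-spherical} and Corollary \ref{cor-toric-degen},'' and what you have written is simply an expanded version of that, spelling out how Theorem \ref{th-Newton-Ok-polytope-spherical} verifies the hypotheses of Corollary \ref{cor-toric-degen} and what the latter actually produces. The extra references to Corollary \ref{cor-SAGBI-basis-spherical} and Proposition \ref{prop-SAGBI-graded-algebra} are not strictly needed here (the SAGBI statement is a separate consequence, not an input to the degeneration), but they do no harm.
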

\begin{proof}
{ The corollary follows directly} from Corollary \ref{cor-Newton-Okounkov-polytope-spherical}, Corollary \ref{cor-SAGBI-basis-spherical} 
and Corollary \ref{cor-toric-degen}.
\end{proof}

{ As mentioned in the introduction, our proof of Corollary \ref{cor-toric-degen-spherical} 
shows that the toric degenerations of spherical varieties/flag varieties constructed in \cite{Caldero}, \cite{AB} and \cite{Kaveh-Michigan} 
fit into the very general framework of toric degenerations associated to valuations and Newton-Okounkov bodies (\cite{Anderson}).}

\vspace{.5cm}
\noindent Kiumars Kaveh\\Department of Mathematics, University of Pittsburgh
\\Pittsburgh, PA, 15260\\{\it Email address:}
{\sf kaveh@pitt.edu}

\begin{thebibliography}{99}
\bibitem[AlBr04]{AB} Alexeev, V.; Brion, M. {\it Toric
degeneration of spherical varities}. Selecta Math. (N.S.)  10  (2004),  no. 4, 453--478.

\bibitem[And13]{Anderson} Anderson, D. {\it Okounkov bodies and toric degenerations}.
Math. Ann. 356 (2013), no. 3, 1183--1202.

\bibitem[BeKa07]{BK} Berenstein, A.; Kazhdan, D. {\it Geometric and unipotent crystals. II. From unipotent bicrystals to crystal bases}. 
Quantum groups, 13--88, Contemp. Math., 433, Amer. Math. Soc., Providence, RI, 2007.

\bibitem[BeZe88]{BZ-PS} Berenstein, A. D.; Zelevinsky, A. V.
{\it Tensor product multiplicities and convex polytopes in partition space}.
J. Geom. Phys.  5  (1988),  no. 3, 453--472.

\bibitem[BeZe01]{BZ-CB} Berenstein, A.; Zelevinsky, A.
{\it Tensor product multiplicities, canonical bases and totally
positive varieties}. Invent. Math. 143 (2001), no. 1, 77--128.




\bibitem[BrKu05]{Brion-Kumar} Brion, M.; Kumar, S. {\it Frobenius splitting methods in geometry and representation theory}. 
Progress in Mathematics, 231. Birkh\"{a}user Boston, Inc., Boston, MA, 2005.

\bibitem[Cal02]{Caldero} Caldero, P.
{\it Toric degenerations of Schubert varieties}. Transform. Groups 7
(2002), no. 1, 51--60.


\bibitem[GeCe50]{GC} Gelfand, I. M.; Cetlin, M.L. {\it Finite dimensional representations of
the group of unimodular matrices}, Doklady Akad. Nauk USSR (N.S.), 71 , (1950), 825--828.

\bibitem[GoLa96]{GL} Gonciulea, N.; Lakshmibai, V. {\it Degenerations of flag and Schubert varieties to toric varieties}. Transform. Groups 1 
(1996), no. 3, 215--248.


\bibitem[HoKa02]{HK} Hong, J.; Kang, S.
{\it Introduction to quantum groups and crystal bases}. 
Graduate Studies in Mathematics, 42. American Mathematical Society, Providence, RI, 2002. 

\bibitem[Hu81]{Humphreys} Humphreys, J. E. {\it Linear algebraic groups}. 
Graduate Texts in Mathematics, No. 21. Springer-Verlag, New York-Heidelberg, 1975.

\bibitem[Ja03]{Jantzen} Jantzen, J. C. {\it Representations of algebraic groups}. second edition,
American Mathematical Society, Providence, 2003.

\bibitem[KKKS14]{KKKS} Kahng, B-H.; Kang, S-J; Kashiwara, M.; Suh, U-R {Dual perfect bases and dual perfect graphs}. arXiv: 1405.1820.

\bibitem[Kash90]{Kashiwara-crystal} Kashiwara, M.
{\it Crystalizing the q-analogue of universal enveloping algebras}.
Comm. Math. Phys. 133 (1990), no. 2, 249--260.

\bibitem[Kash93]{Kashiwara-Demazure} Kashiwara, M.
{\it The crystal base and Littelmann's refined Demazure character formula}.
Duke Math. J. Volume 71, Number 3 (1993), 839--858.

\bibitem[Kash95]{Kashiwara-crystal-notes}  Kashiwara, M. {\it On crystal bases}. Representations of groups (Banff, AB, 1994), 155--197, 
Canadian Mathematical Society Conf. Proc., 16, Amer. Math. Soc., Providence, RI, 1995.

\bibitem[Kav05]{Kaveh-Michigan} Kaveh, K. {\it SAGBI bases and degeneration
of spherical varieties to toric varieties}.  Michigan Math. J.  53  (2005),  no. 1, 109--121.

\bibitem[Kav11]{Kaveh-JLT} Kaveh, K. {\it Note on cohomology rings of spherical varieties and volume polynomial}. 
J. Lie Theory 21 (2011), no. 2, 263--283.

\bibitem[KaKh08]{KKh-affine}
Kaveh, K.; Khovanskii, A. G. {\it Convex bodies and algebraic equations on affine varieties}.
Preprint: {\sf arXiv:0804.4095v1}.
A short version with title {\it Algebraic equations and convex bodies}
appeared in Perspectives in analysis, geometry, and topology, 263--282, Progr. Math., 296, Birkh\"{a}user/Springer, 
New York, 2012. 

\bibitem[KaKh12a]{KKh-NO} Kaveh, K.; Khovanskii, A. G.
{\it Newton-Okounkov bodies, semigroups of integral points, graded algebras and intersection theory}.
 Ann. of Math. (2) 176 (2012), no. 2, 925--978. Originally posted as {\sf arXiv:0904.3350}.

\bibitem[KaKh12b]{KKh-Arnold} Kaveh, K.; Khovanskii, A. G. {\it Convex bodies associated to
actions of reductive groups}. Mosc. Math. J. 12 (2012), no. 2, 369--396, 461.

\bibitem[Kaz87]{Kaz} Kazarnovskii, B. Ya. {\it Newton polyhedra and Bezout's formula for matrix functions of finite-dimensional representations}. (Russian) 
Funktsional. Anal. i Prilozhen. 21 (1987), no. 4, 73--74.



\bibitem[Kir10]{Valentina} 
Kiritchenko, V. {\it Gelfand-Zetlin polytopes and flag varieties}. Int. Math. Res. Not. IMRN 2010, no. 13, 2512--2531.


\bibitem[Kir14]{Valentina-DDO} Kiritchenko, V. {\it Divided difference operators on convex polytopes}. To appear in Adv. Studies in Pure Math.

\bibitem[KLM12]{KLM}  K\"uronya, A.; Lozovanu, V.; Maclean, C. {\it Convex bodies appearing as Okounkov bodies of divisors}. Adv. Math. 229 (2012), no. 5, 2622--2639.

\bibitem[KST12]{KST}
Kirichenko, V. A.; Smirnov, E. Yu.; Timorin, V. A. {\it Schubert calculus and Gelfand-Tsetlin polytopes}. 
(Russian) Uspekhi Mat. Nauk 67 (2012), no. 4(406), 89--128; translation in Russian Math. Surveys 67 (2012), no. 4, 685--719

\bibitem[Ku02]{Kumar} Kumar, S. {\it Kac-Moody groups, their flag varieties and representation theory}. 
Progress in Mathematics, 204. Birkh\"{a}user Boston, Inc., Boston, MA, 2002.


\bibitem[LaMu08]{LM}
Lazarsfeld, R.; Mustata, M. {\it Convex bodies associated
to linear series}.  Ann. Sci. Ec. Norm. Super. (4)  42  (2009),  no. 5, 783--835.

\bibitem[Litt98]{Littelmann} Littelmann, P. {\it Cones, crystals, and patterns}.
Transform. Groups 3 (1998),  no. 2, 145--179.

\bibitem[Lus90]{Lusztig}  Lusztig, G. {\it Canonical bases arising from quantized enveloping algebras}. J. Amer. Math. Soc. 3 (1990), no. 2, 447Ð498. 

\bibitem[Ma98]{Magyar} Magyar, P. {\it Schubert polynomials and Bott-Samelson varieties}. Comment.
Math. Helv. 73 (1998), no. 4, 603--636.

\bibitem[Man11]{Manon} Manon, C. {\it Toric degenerations and tropical geometry of branching algebras}.
{\sf arXiv:1103.2484v1}

\bibitem[Ok96]{Ok-BM} Okounkov, A.
{\it Brunn-Minkowski inequality for multiplicities}. Invent. Math.
125 (1996), no. 3, 405--411.

\bibitem[Ok97]{Ok-spherical} Okounkov, A.
{\it A remark on the Hilbert polynomial of a spherical variety}.
Func. Anal. and Appl., 31 (1997), 82--85.

\bibitem[Ok98]{Ok-NP} Okounkov, A. {\it Multiplicities and Newton
polytopes}.  Kirillov's seminar on representation theory,  231--244,
Amer. Math. Soc. Transl. Ser. 2, 181, , Amer. Math. Soc., Providence, RI,
1998.

\bibitem[Ok03]{Ok-LC} Okounkov, A.
{\it Why would multiplicities be log-concave?}
The orbit method in geometry and physics (Marseille, 2000),
329--347, Progr. Math., 213, Birkha"user Boston, Boston, MA, 2003.

\bibitem[Par83]{Parshin} Parshin, A. N.{\it  Chern classes, adeles and L-functions}. J. Reine Angew. Math. 341, 174--192 (1983).


\bibitem[Rei03]{Zinovy} Reichstein, Z. {\it SAGBI bases in rings of multiplicative invariants}.
Comment. Math. Helv.  78  (2003),  no. 1, 185--202.

\bibitem[Stu96]{Sturmfels} Sturmfels, B. {\it Gr\"{o}bner bases and convex polytopes}. 
University Lecture Series, 8. American Mathematical Society, Providence, RI, 1996.

\bibitem[Te99]{Teissier}
B. Teissier. {\it Valuations, deformations, and toric geometry}. 
In Valuation theory and its applications, Vol. II (Saskatoon, SK, 1999), 361--459,
Fields Inst. Commun., 33, Amer. Math. Soc., Providence, RI, 2003.

\bibitem[Tim11]{Timashev} Timashev, D. A. {\it Homogeneous spaces and equivariant embeddings}. Encyclopaedia of Mathematical Sciences, 138. 
Invariant Theory and Algebraic Transformation Groups, 8. Springer, Heidelberg, 2011. 

\end{thebibliography}
\end{document}